\documentclass[11pt, reqno]{amsart}
%\raggedbottom

\usepackage{comment}
\usepackage[rgb,dvipsnames]{xcolor}
\usepackage{stmaryrd}
\usepackage{amsfonts}
\usepackage{amssymb}
\usepackage{amsmath,amsthm}
\usepackage{latexsym}
\usepackage{amscd}
\usepackage{esint}
\usepackage{mathrsfs}
\usepackage{faktor}
\usepackage{dsfont}
\usepackage{setspace}
\usepackage{enumitem}
\usepackage[margin=1.2in]{geometry}
\usepackage{bm}

\usepackage{hyperref}
\usepackage{comment}
\usepackage{enumitem}

%Theorem, Lemma...etc. environments
\newtheorem{prop}{Proposition}[section]
\newtheorem{thm}[prop]{Theorem}
\newtheorem{lemm}[prop]{Lemma}

\newtheorem*{claim*}{Claim}

\theoremstyle{definition}
\newtheorem{defi}[prop]{Definition}
\newtheorem{rmk}[prop]{Remark}

%mathbb

\newcommand{\NN}{\mathbb{N}}

\newcommand{\RR}{\mathbb{R}}

\newcommand{\ZZ}{\mathbb{Z}}

%mathcal 
\newcommand{\cA}{\mathcal A}
\newcommand{\cB}{\mathcal B}
\newcommand{\cC}{\mathcal C}

\newcommand{\cE}{\mathcal E}

\newcommand{\cK}{\mathcal K}

\newcommand{\cN}{\mathcal N}

\newcommand{\cP}{\mathcal P}

\newcommand{\cT}{\mathcal T}
\newcommand{\cU}{\mathcal U}
\newcommand{\cV}{\mathcal V}

%mathscr

%mathfrak

%Operators and keywords

%Miscellaneous symbols

\newcommand{\ep}{\varepsilon}

\newcommand{\Vol}{\text{Vol}}

\newcommand{\pa}[2]{\frac{\partial #1}{\partial #2}}

\newcommand{\paop}[1]{\pa{}{#1}}

\newcommand{\Area}{\text{Area}}

\newcommand{\rom}[1]{\expandafter\romannumeral #1}
\newcommand{\Rom}[1]{\uppercase\expandafter{\romannumeral #1}}

\setlist[enumerate]{leftmargin = 2em}
\allowdisplaybreaks

\numberwithin{equation}{section}

%Title, Authors and Abstract------------------------------------------------
\title[Existence of curves with constant geodesic curvature]{Existence of curves with constant geodesic curvature in a Riemannian 2-sphere}
\author{Da Rong Cheng}
\address{Department of Pure Mathematics, University of Waterloo, Waterloo, ON, N2L 3G1 Canada}
\email{drcheng@uwaterloo.ca}
\author{Xin Zhou}
\address{Department of Mathematics, Cornell University, Ithaca, NY 14853, USA, and Department of Mathematics, University of California Santa Barbara, Santa Barbara, CA 93106, USA}
\email{xinzhou@cornell.edu}

\begin{document}

\begin{abstract} 
We prove the existence of immersed closed curves of constant geodesic curvature in an arbitrary Riemannian 2-sphere for almost every prescribed curvature. To achieve this, we develop a min-max scheme for a weighted length functional.   
\end{abstract}

\maketitle

%Introduction------------------------------------------------------------------------
\section{Introduction}

In this paper, we investigate the existence of closed curves with prescribed constant geodesic curvature in an arbitrary Riemannian 2-sphere. Aside from being natural geometric objects, curves with constant geodesic curvature are also trajectories of charged particles in a special class of magnetic fields, and the existence problem has been studied extensively. Moreover, a famous conjecture by Arnold \cite[1981-9]{Arnold04} asserts that every Riemannian 2-sphere contains at least two distinct closed curves with geodesic curvature $\kappa$ for any $0<\kappa<\infty$. Many approaches have been developed to tackle this conjecture, including Morse-Novikov theory, symplectic topology methods, degree theory, and variational theory. Nevertheless, the conjecture remains open for an arbitrary Riemannian 2-sphere. Our main result, using a new variational scheme analogous to that in our earlier work \cite{Cheng-Zhou20}, establishes the existence of at least one such curve for almost every prescribed geodesic curvature, without any curvature assumption on the ambient metric.

\begin{thm}\label{thm:main1}
Given a 2-sphere with an arbitrary Riemannian metric $(S^2, g)$, for almost every $\kappa>0$, there exists a nontrivial closed immersed curve with constant geodesic curvature $\kappa$.
\end{thm}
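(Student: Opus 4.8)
The plan is to produce the curve as an interior critical point of a \emph{weighted length functional} obtained by a one-parameter min-max. On the space of finite-perimeter (Caccioppoli) sets $\Omega\subset S^2$ — equivalently, on integral $1$-currents bounding in $S^2$ — consider
\[
\mathcal{L}_\kappa(\Omega)\;=\;\mathcal{H}^1_g(\partial^*\Omega)\;+\;\kappa\,|\Omega|_g,
\]
whose first variation shows that interior critical points are (generalized) curves of constant geodesic curvature $\kappa$ with respect to a suitable choice of conormal. Fix the min-max class $\Pi$ of sweepouts: continuous one-parameter families $\{\Omega_t\}_{t\in[0,1]}$ of Caccioppoli sets, continuous in the flat topology, with $|\Omega_0|_g=0$ and $|\Omega_1|_g=|S^2|_g$; set $W_\kappa=\inf_{\{\Omega_t\}\in\Pi}\sup_t\mathcal{L}_\kappa(\Omega_t)$. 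Two quantitative inputs are needed. First, the min-max should be genuinely of mountain-pass type, i.e.\ $W_\kappa>\max\{\mathcal{L}_\kappa(\Omega_0),\mathcal{L}_\kappa(\Omega_1)\}=\kappa|S^2|_g$: since $t\mapsto|\Omega_t|_g$ is continuous and surjective onto $[0,|S^2|_g]$, one has $\sup_t\mathcal{L}_\kappa(\Omega_t)\ge\sup_{0\le v\le |S^2|_g}\big(I_g(v)+\kappa v\big)$ with $I_g$ the isoperimetric profile of $(S^2,g)$, and since $I_g(v)\sim c(|S^2|_g-v)^{1/2}$ as $v\uparrow|S^2|_g$ this supremum strictly exceeds $\kappa|S^2|_g$. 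Second, $W_\kappa\le A+B\kappa$ with $A,B$ depending only on $g$, obtained by evaluating $\mathcal{L}_\kappa$ along the level-set sweepout of a fixed Morse function — and one should choose this test sweepout with some care, since the size of $B$ will matter.

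With $W_\kappa$ strictly above the endpoint values, I would run the pull-tight and min-max machinery for $\mathcal{L}_\kappa$: starting from a minimizing sequence of sweepouts, a deformation argument yields a min-max sequence of slices that are almost minimizing in small arcs and converge, as $1$-varifolds, to a nonzero stationary varifold $V_\kappa$ for $\mathcal{L}_\kappa$ with $\|V_\kappa\|(S^2)\le W_\kappa$. In this low dimension the regularity theory is essentially the classical theory of $1$-varifolds together with the prescribed-geodesic-curvature ODE: $V_\kappa$ is a finite union of smooth closed immersed curves, each of which is either a closed $g$-geodesic (possibly with integer multiplicity) or a curve of constant geodesic curvature $\kappa$ with multiplicity one. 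If even one component is of the latter type, that component proves Theorem~\ref{thm:main1} for this $\kappa$ (and it is nontrivial, since a constant curve carries no geodesic curvature). So the remaining task is to exclude, for all $\kappa$ outside a Lebesgue-null set, the \emph{degenerate alternative} in which $V_\kappa$ is supported entirely on closed geodesics with even multiplicities — the analogue here of a min-max minimal object appearing in place of a prescribed-curvature one.

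Ruling out the degenerate alternative for almost every $\kappa$ is the step I expect to be the main obstacle. The tool is the width function $\kappa\mapsto W_\kappa$: it is non-decreasing (as $|\Omega|_g\ge0$), and comparison along a near-optimal sweepout for $\kappa$ gives $W_{\kappa'}-W_\kappa\le(\kappa'-\kappa)|S^2|_g$ for $\kappa<\kappa'$, so $W$ is locally Lipschitz and differentiable off a null set $N$. At each $\kappa_0\notin N$ an envelope-type argument (differentiating $\sup_t\mathcal{L}_\kappa(\Omega_t)$ in $\kappa$ along almost-optimal sweepouts and passing to the limit) identifies $W'(\kappa_0)$ with the $g$-area of the region bounded by a min-max critical curve at $\kappa_0$. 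In the degenerate alternative the reduced boundaries of the min-max sets $\Omega_i$ accumulate onto geodesics from both sides, so $\Omega_i\to\emptyset$ or $\Omega_i\to S^2$ and the bounded area degenerates to $0$ or $|S^2|_g$; hence such $\kappa_0$ lies in the set $\{W'\in\{0,|S^2|_g\}\}$. One then shows this set is Lebesgue-null, and this is where the quantitative inputs above must be combined carefully with $\int_0^\kappa W'(s)\,ds=W_\kappa-W_0$: a positive-measure set of such $\kappa_0$ would force $W$ to grow either too slowly (where $W'=0$) or as fast as the trivial bound $\kappa|S^2|_g$ (where $W'=|S^2|_g$), each of which can be contradicted by the strict mountain-pass inequality and by an improved test sweepout (controlling the coefficient $B$). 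Once this is done, for every $\kappa$ outside $N$ together with this bad set the varifold $V_\kappa$ has a component of constant geodesic curvature exactly $\kappa$, which is the desired nontrivial closed immersed curve. Apart from this genericity step, the technical load lies in the pull-tight deformation and the structure/regularity theory for $\mathcal{L}_\kappa$ (in particular ruling out junction-type singularities in the min-max limit) and in the sharp control of $I_g$ and of explicit sweepouts.
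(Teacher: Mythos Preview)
Your approach is genuinely different from the paper's. You work with Caccioppoli sets and the single-valued functional $\mathcal L_\kappa(\Omega)=\mathcal H^1_g(\partial^*\Omega)+\kappa|\Omega|_g$, running an Almgren--Pitts-type min-max and then trying to use differentiability of $\kappa\mapsto W_\kappa$ to exclude a geodesic limit. The paper instead works parametrically with maps $u:S^1\to S^2$ and the \emph{multi-valued} functional $L_\ep(u)+\kappa A(f_u)$, where $L_\ep$ is a smooth regularization of length; the ``almost every $\kappa$'' comes from Struwe's monotonicity trick on $\kappa\mapsto\omega_{\kappa,\ep}/\kappa$, which at differentiability points yields an $\ep$-uniform \emph{length} bound on near-maximal slices, after which a pseudo-gradient deformation produces critical points $u_\ep$ and one lets $\ep\to0$. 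Crucially, in the paper there is no ``degenerate geodesic'' alternative to rule out: the Euler--Lagrange equation of the perturbed functional already contains the term $\kappa Q_u(u')$, so any nontrivial limit automatically has geodesic curvature $\kappa$.

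Your proposal has two genuine gaps. First, the regularity step is not ``essentially classical.'' The Caccioppoli/varifold min-max for $\mathcal L_\kappa$ is precisely the route of Zhou--Zhu and Ketover--Liokumovich cited in the introduction, and the known output there is a $C^{1,1}$ \emph{network} with a possible node, not a union of smooth closed immersed curves; eliminating junctions is the hard part, and you have not indicated how you would do it. Second, your mechanism for excluding the all-geodesic alternative does not close. From $0\le W'\le |S^2|_g$ and $W_\kappa>\kappa|S^2|_g$ one only gets a \emph{finite} bound $|\{W'=0\}\cap[0,\kappa]|<W_0/|S^2|_g$, not a null set. On the other side, since every admissible sweepout ends at $\Omega_1=S^2$ one has $\sup_t\mathcal L_\kappa(\Omega_t)\ge\kappa|S^2|_g$ for every sweepout, so no test family can achieve $B<|S^2|_g$ in $W_\kappa\le A+B\kappa$; hence ``controlling $B$'' cannot show that $\{W'=|S^2|_g\}$ is null. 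Without these two ingredients your argument does not produce a smooth immersed curve of curvature exactly $\kappa$.
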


\begin{rmk}
We learned after completing this work that Theorem~\ref{thm:main1} is already contained in the work of Asselle and Benedetti~\cite{Asselle-Benedetti16}, where solutions are interpreted as magnetic geodesics. However, we think our method, which involves perturbing the length functional and hence differs from that in~\cite{Asselle-Benedetti16}, should still be of independent interest. (For other related work on magnetic geodesics, see for instance~\cite{Asselle-Benedetti16, Asselle-Schmaschke18, AAMT17} and the references therein.)
\end{rmk}

To write down the differential equation of interest, assume that $(S^2, g)$ is isometrically embedded into some $\RR^N$. The solutions we produce are smooth curves $u: S^1 \to (S^2, g) \subset \RR^N$ satisfying $|u'| = 1$ and 
\begin{equation}\label{eq:cgc-equation}
u'' = A_u(u', u') + \kappa \cdot Q_u(u'),
\end{equation}
where $(\cdot)'$ denotes component-wise differentiation, $\kappa$ is the prescribed geodesic curvature, and $A, Q$ are, respectively, the second fundamental form of the embedding $(S^2, g) \to \RR^N$ and the almost-complex structure on $S^2$ induced by the metric and the orientation. An outline of our proof of Theorem~\ref{thm:main1} is given below following a brief discussion of some related previous results. Here we mention that while Theorem~\ref{thm:main1} yields a solution for almost every $\kappa$, we do not have a uniform control over their lengths as $\kappa$ varies. Conceivably, the method for~\cite[Theorem 1.2]{Cheng-Zhou20} can be used to overcome this and remove the restriction on $\kappa$ in Theorem~\ref{thm:main1} when $(S^2, g)$ has positive curvature, but we do not pursue this here.

\vspace{1em}
We now briefly review previous results on the Arnold conjecture. This is by no means an exhaustive survey, and the reader is encouraged to consult the references and their bibliographies for a more complete picture. Focusing on the multi-valued nature of the relevant functional (see Section~\ref{subsec:prelim} below), Novikov~\cite{Novikov82} proposed a generalization of Morse theory, now known as the Morse-Novikov theory, and as an application it was shown by Novikov-Taimanov \cite{Novikov-Taimanov84} that closed embedded curve of curvature $\kappa$ exists for strong field, that is, for $\kappa$ large; see also \cite{Taimanov91}.  On the other hand, using the symplectic topology methods introduced by Arnold \cite{Arnold86}, Ginzburg (see the survey \cite{Ginzburg96} and references therein) obtained the existence of close curves of constant geodesic curvature $\kappa$ for small and large values of $\kappa$. Recently, Schneider has developed a degree theory for immersed curves in \cite{Schneider11} and confirmed Arnold's conjecture assuming nonnegative Gaussian curvature \cite{Schneider12}; see also \cite{Rosenberg-Smith20, Rosenberg-Schneider11}. Finally, we mention that Zhou-Zhu \cite{Zhou-Zhu20b}, using the CMC min-max theory in the Almgren-Pitts framework developed in \cite{Zhou-Zhu19, Zhou-Zhu20}, proved the existence of networks of constant geodesic curvature $\kappa$ for any $\kappa>0$ on an arbitrary closed Riemannian surface, and Ketover-Liokumovich \cite{Ketover-Liokumovich18} has further shown that the network is $C^{1, 1}$ and has at most 1 node point.

By comparison, the existence of close geodesics on Riemannian 2-spheres, and also on general closed Riemannian manifolds, dates back to Birkhoff \cite{Birkhoff17}. See also \cite{Colding-Minicozzi08a} for a strengthened version by Colding-Minicozzi. Since then, tremendous progress has been made on the existence of closed geodesics, and we refer the reader to \cite{Marques-Neves17} for a nice summary. In particular, every Riemannian 2-sphere contains at least three distinct simple closed geodesics~\cite{Lyusternik-Snirelman47, Grayson89}, and also infinitely many possibly self-intersecting ones~\cite{Franks92, Bangert93}.

\subsection*{Overview of proof}

Formally speaking, the curves we seek are unit-speed critical points of the following weighted length functional:
\[
\int_{S^1} |u'|\ d\theta + \kappa \cdot A(f_u) = : L(u) + \kappa \cdot A(f_u) ,
\]
where $A(f_u)$ is the area enclosed by a choice of extension $f_u$ of $u$ to the unit disk and is only well-defined up to an integer multiple of $\Area_g(S^2)$. (See Section~\ref{subsec:prelim} for more details.) However, this weighted functional is not directly amenable to standard variational techniques due to at least two issues. First of all, the length of a curve is invariant under reparametrizations. Secondly, since it is possible for the enclosed area term to become very negative, the weighted functional is not bounded from below, and nor does it control the length of the curve.

To overcome the first difficulty, instead of replacing the length by the energy as in the classical setting of geodesics, where the enclosed area term is absent, we perturb the functional by replacing $L(u)$ with
\[
L_{\ep}(u) := \int_{S^1} \big[(\ep^2 + |u'|^2)^{\frac{1 + \ep}{2}}  - \ep^{1 + \ep}\big]d\theta,
\]
thereby obtaining a $C^1$-functional whose critical points are smooth with constant speed. We note that a similar regularization was used by Bahri-Taimanov \cite{Bahri-Taimanov98} in finding periodic orbits in magnetic fields. However, in their situation magnetic fields are assumed to be exact, and as a result their functionals are single-valued. Also, the Morse-theoretic arguments in~\cite{Bahri-Taimanov98} require their perturbed functionals to be $C^2$, whereas $C^1$ suffices for our present purposes.

Next, to address the second difficulty and also obtain uniform estimates with respect to the parameter $\ep$, we utilize the monotonicity properties of the min-max value with respect to $\kappa$ along with Fatou's lemma, as done by Struwe~\cite{Struwe88}, to obtain, for almost every $\kappa$, a minimizing sequence of sweepouts whose almost-maximal slices enjoy $L_{\ep}$-bounds independently of $\ep$. (See also our previous work on CMC spheres \cite{Cheng-Zhou20}.) In addition, the non-triviality of the sweepouts together with an isoperimetric inequality allow us to establish a uniform lower bound on $L_{\ep}$ for these slices as well. The upper and lower bounds then put us in a position to apply a standard deformation argument using pseudo-gradient flows to produce non-constant critical points $u_{\ep}$ of the perturbed functionals with $L_{\ep}(u_{\ep})$ uniformly bounded, which implies a uniform Lipschitz estimate as each $u_\ep$ has constant speed. To conclude the proof of Therem~\ref{thm:main1}, we use the Euler-Lagrange equations of $L_{\kappa, \ep}$ to obtain higher-order estimates independent of $\ep$ and pass to a subsequential limit as $\ep \to 0$ to get a smooth, constant-speed curve which is non-trivial thanks to the uniform lower bound on $L_{\ep}(u_{\ep})$, and satisfies~\eqref{eq:cgc-equation} up to reparametrization.

We remark that if we proceed as in the case of geodesics and replace $L(u)$ by
\[
\frac{1}{2}\int_{S^1}|u'|^2 d\theta
\]
when perturbing the weighted length functional, we would obtain in the end a solution to~\eqref{eq:cgc-equation} with constant, but not necessarily unit, speed. The geodesic curvature of the resulting curve is then $\kappa$ divided by this constant speed, which we could not prescribe.

\subsection*{Organization} 
In Section \ref{S:perturbed functional}, we set up some notation and define the perturbed functional $L_{\kappa, \ep}$. We show that critical points of $L_{\kappa, \ep}$ are smooth and prove a version of the Palais-Smale condition for $L_{\kappa, \ep}$. Most of our effort goes into Section \ref{S:existence of critical points}, where we find nontrivial critical points of $L_{\kappa, \ep}$ with $L_\ep$ bounded independent of $\ep$ for almost all prescribed curvatures $\kappa>0$. Two key ingredients are a derivative bound for the min-max values with respect to $\kappa$, contained in Section \ref{SS:derivative estimates}, and a localized version of pseudo-gradient flow argument, contained in Section \ref{SS:pseudo-gradient flow}. The proof of the main result is completed in Section \ref{S:pass to limit} by analyzing the limit of critical points when $\ep \to 0$.

\vspace{1em}
{\bf Acknowledgement}: X. Z. is partially supported by NSF grant DMS-1811293, DMS-1945178, and an Alfred P. Sloan Research Fellowship.

\section{The perturbed functional}\label{S:perturbed functional}
Below, $S^1$ denotes the unit circle, and functions on $S^1$ are identified with $2\pi$-periodic functions on $\RR$. We assume that the target $(S^2, g)$ is isometrically embedded into some $\RR^N$, and denote by $A$ the second fundamental form of this embedding. By $\cV$ we mean a tubular neighborhood of $S^2$ in $\RR^N$ on which the nearest-point projection, denoted $\Pi: \cV \to S^2$, has bounded derivatives of all orders. For brevity, the differential of $\Pi$ will be denoted $P: \cV \to \RR^{N \times N}$. Thus for $y \in \cV$, the matrix $P_y$ represents orthogonal projection onto $T_{\Pi(y)}S^2$. Given a map $v : S^1 \to \cV$, we write $P_v$ for the composition $P \circ v: S^1 \to \RR^{N \times N}$.

The metric $g$ and the volume form on $(S^2, g)$ determines an orthogonal almost-complex structure on $TS^2$, which we denote by $Q$, given by
\[
\Vol_g(X, Y) = g(X, Q(Y)).
\]

\subsection{Preliminaries}\label{subsec:prelim}
We will work with the following Sobolev spaces for $\ep > 0$ small:
\[
W^{1, 1 + \ep}(S^1; S^2) = \{u \in W^{1, 1 + \ep}(S^1; \RR^N)\ |\ u(\theta) \in S^2 \text{ for all }\theta \in S^1 \}.
\]
We equip it with the subspace topology coming from the $W^{1, 1 + \ep}$-norm
\[
\|u\|_{1, 1 +\ep} = \big( \int_{S^1} |u|^{1 + \ep} + |u'|^{1 + \ep} d\theta \big)^{\frac{1}{1 + \ep}}.
\]
The space $W^{1, 1 + \ep}(S^1; S^2)$ is a smooth, closed submanifold of the Banach space $W^{1, 1 + \ep}(S^1; \RR^N)$, with the tangent space at $u$ identified with
\[
\cT_u = \{ \psi \in W^{1, 1 + \ep}(S^1; \RR^N)\ |\ \psi(\theta) \in T_{u(\theta)}S^2 \text{ for all }\theta \in S^1 \},
\] 
which is a closed subspace of $W^{1, 1 + \ep}(S^1; \RR^N)$ with a closed complement. Letting $\Theta_u(\psi) = \Pi(u + \psi)$ for $\psi \in \cT_u$, then for small enough balls $\cB_u$ around the origins in $\cT_u$, the collection $\{ (\cB_u, \Theta_u |_{\cB_u}) \}_{u \in W^{1, 1 + \ep}(S^1; S^2)}$ form a smooth atlas. Restricting the $W^{1, 1 + \ep}$-norm to each tangent space $\cT_u$ yields a Finsler structure on $W^{1, 1 + \ep}(S^1; S^2)$.

For $u \in W^{1, 1 + \ep}(S^1; S^2)$, consider
\[
\cE(u) = \{f \in C^0([0, 1] \times S^1; S^2)\ |\ f(0, \cdot) = \text{ constant, }f(1, \cdot) = u\},
\]
which can be thought of as the set of extensions of $u$ to the unit disk $D$. A key component of the perturbed functionals is the area enclosed by extensions, which we now define. First, denoting by $\theta$ and $t$ the variables on $S^1$ and $[0, 1]$, respectively, we introduce the following more restrictive class of extensions
\[
\widetilde{\cE}(u) = \{ f \in \cE(u) \cap W^{1, 1}([0, 1] \times S^1; S^2)\ |\ f_{\theta} \in L^{1 + \ep}([0, 1] \times S^1),\ f_t \in L^{\frac{1 + \ep}{\ep}}([0, 1] \times S^1)\},
\]
and equip it with the topology coming from the norm
\[
f \mapsto \|f\|_{C^0} + \|f_{t}\|_{\frac{1 + \ep}{\ep}} + \|f_\theta\|_{1 + \ep}.
\]
For $u \in W^{1, 1 + \ep}(S^1; S^2)$ and $f \in \widetilde{\cE}(u)$, we define the area enclosed by $u$ with respect to the extension $f$ by
\[
A(f) = \int_{[0, 1] \times S^1} f^{\ast}\Vol_{g}. 
\]
The basic properties of the enclosed area are summarized below.
\begin{lemm}
\label{lemm:area-properties}
Given  $u \in W^{1, 1 + \ep}(S^1; S^2)$, the following hold.
\begin{enumerate}
\item[(a)] For $f_0, f_1 \in \widetilde{\cE}(u)$ we have 
\[
A(f_0) - A(f_1)  = k \Area_g(S^2) \text{ for some }k \in \ZZ. 
\]
\item[(b)] Let $F:[0, 1] \to \widetilde{\cE}(u)$ be a continuous path. Then $A(F(0)) = A(F(1))$.
\item[(c)] There exists a universal constant $\delta_0$ such that if $f_0, f_1 \in \widetilde{\cE}(u)$ and $\|f_0 - f_1\|_{C^0} < \delta_0$ then $A(f_1) = A(f_0)$.
\end{enumerate}
\end{lemm}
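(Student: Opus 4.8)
The plan is to treat the three parts in the order (a), (b), (c), with part (a) carrying essentially all the difficulty and (b), (c) following formally from it.

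\textbf{Part (a).} I would glue $f_0$ to a ``reflected'' copy of $f_1$ along their common boundary value $u$. Concretely, define $F$ on the cylinder $[0,2]\times S^1$ by $F(t,\theta)=f_0(t,\theta)$ for $t\in[0,1]$ and $F(t,\theta)=f_1(2-t,\theta)$ for $t\in[1,2]$. Since $f_0(1,\cdot)=u=f_1(1,\cdot)$ the two pieces have matching traces, so $F$ is continuous, lies in $W^{1,1}$ with $F_\theta\in L^{1+\ep}$ and $F_t\in L^{\frac{1+\ep}{\ep}}$ on $[0,2]\times S^1$, and is constant on each boundary circle $\{0\}\times S^1$ and $\{2\}\times S^1$; a change of variables using that $(t,\theta)\mapsto(2-t,\theta)$ reverses orientation gives $\int_{[0,2]\times S^1}F^*\Vol_g=A(f_0)-A(f_1)$, so it suffices to show this integral lies in $\Area_g(S^2)\,\ZZ$. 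After precomposing with a bi-Lipschitz reparametrization of the $t$-variable, which changes neither the integral nor the relevant integrability, I may assume $F$ is constant on a collar of each boundary circle. Extending $F$ by these constant values to a slightly larger cylinder, mollifying, and projecting back to $S^2$ by $\Pi$ produces smooth maps $F_\delta:[0,2]\times S^1\to S^2$, still constant near the boundary circles, with $F_\delta\to F$ uniformly, $(F_\delta)_\theta\to F_\theta$ in $L^{1+\ep}$, and $(F_\delta)_t\to F_t$ in $L^{\frac{1+\ep}{\ep}}$. Since $F^*\Vol_g$ is, in coordinates, a bilinear expression in $(F_t,F_\theta)$ with coefficients that are smooth bounded functions of $F$, and the exponents $1+\ep$ and $\frac{1+\ep}{\ep}$ are Hölder-dual, a three-term splitting gives $F_\delta^*\Vol_g\to F^*\Vol_g$ in $L^1$, hence $\int F_\delta^*\Vol_g\to\int F^*\Vol_g$. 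Each $F_\delta$, being constant near $\partial([0,2]\times S^1)$, descends to a smooth map $g_\delta:S^2\to S^2$ with $\int_{[0,2]\times S^1}F_\delta^*\Vol_g=\int_{S^2}g_\delta^*\Vol_g=\deg(g_\delta)\,\Area_g(S^2)$. Since $\Area_g(S^2)\,\ZZ$ is a closed discrete subset of $\RR$ and the sequence $\int F_\delta^*\Vol_g$ converges, its limit $A(f_0)-A(f_1)$ must lie in $\Area_g(S^2)\,\ZZ$, which is (a).

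\textbf{Part (b).} The key point is that $A$ is continuous on $\widetilde{\cE}(u)$. Writing $f^*\Vol_g$ as above, and using that convergence in the $\widetilde{\cE}(u)$-topology means $C^0$-convergence of the maps together with $L^{\frac{1+\ep}{\ep}}$-convergence of the $t$-derivatives and $L^{1+\ep}$-convergence of the $\theta$-derivatives, the same dual-Hölder estimate as in (a) shows $f_n^*\Vol_g\to f^*\Vol_g$ in $L^1$ whenever $f_n\to f$, hence $A(f_n)\to A(f)$. Consequently $s\mapsto A(F(s))-A(F(0))$ is a continuous function on $[0,1]$ which, by part (a), takes values in the closed discrete set $\Area_g(S^2)\,\ZZ$; since $[0,1]$ is connected and the function vanishes at $s=0$, it vanishes identically, giving $A(F(0))=A(F(1))$.

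\textbf{Part (c).} I would take $\delta_0$ small enough that the closed $\delta_0$-neighborhood of $S^2$ in $\RR^N$ lies in $\cV$, a constant depending only on the embedding and in particular independent of $u,f_0,f_1$. If $\|f_0-f_1\|_{C^0}<\delta_0$ then the affine path $v_s:=(1-s)f_0+sf_1$ takes values in $\cV$, so $F(s):=\Pi\circ v_s$ is well-defined. One checks $F(s)\in\widetilde{\cE}(u)$: it is constant on $\{0\}\times S^1$ because $f_0$ and $f_1$ are, it equals $u$ on $\{1\}\times S^1$ because $\Pi$ restricts to the identity on $S^2$, and since $\partial_\theta F(s)=P_{v_s}\,\partial_\theta v_s$ and $\partial_t F(s)=P_{v_s}\,\partial_t v_s$ with $P$ bounded, it inherits the required $W^{1,1}$, $L^{1+\ep}$ and $L^{\frac{1+\ep}{\ep}}$ bounds from $f_0,f_1$. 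A similar computation, together with the uniform smallness of $f_0-f_1$, shows $s\mapsto F(s)$ is (Lipschitz) continuous into $\widetilde{\cE}(u)$, and applying (b) to this path yields $A(f_0)=A(F(0))=A(F(1))=A(f_1)$.

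The main obstacle is part (a): the statement is really a topological-degree fact, but it must be proved for maps of rather low regularity (barely above $W^{1,1}$, with only the mixed integrability $F_\theta\in L^{1+\ep}$, $F_t\in L^{\frac{1+\ep}{\ep}}$ available), so care is needed in reparametrizing to a collar-constant map, in the mollify-and-project approximation, and above all in checking that the pulled-back area forms converge in $L^1$ — which is exactly where the Hölder-duality of the two exponents enters. Parts (b) and (c) are then routine consequences of (a) and the same $L^1$-continuity estimate.
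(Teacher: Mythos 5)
Your proof follows essentially the same route as the paper's for all three parts: concatenate $f_0$ with the reversed $f_1$, approximate the concatenation by smooth maps that remain constant near the boundary circles of the cylinder, and identify the normalized area integral with the degree of the induced self-map of $S^2$; then deduce (b) from (a) together with the $L^1$-continuity of $f\mapsto f^*\Vol_g$ coming from the H\"older duality of the exponents $1+\ep$ and $\frac{1+\ep}{\ep}$; and deduce (c) by applying (b) to the path $s\mapsto\Pi\big((1-s)f_0+sf_1\big)$, which is exactly the paper's choice. The one slip is in part (a): a \emph{bi-Lipschitz} reparametrization of the $t$-variable cannot make the concatenation constant on a collar, since collapsing an interval to a boundary point forces $\phi'=0$ there. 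What you want is a merely monotone Lipschitz $\phi$ that is constant near $t=0$ and $t=2$; this still preserves the integral (monotone absolutely continuous change of variables) and the mixed integrability (on the collars the precomposed map is constant because the traces $F(0,\cdot)$, $F(2,\cdot)$ are constant, so both partial derivatives vanish there). Alternatively, and more simply, you can drop the reparametrization entirely and rely only on the step you perform next — extending the concatenation past $\{0,2\}\times S^1$ by its constant boundary values before mollifying — which already yields collar-constant smooth approximations.
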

\begin{proof}
For part (a), we consider the concatenation $h = f_0 + (-f_1)$, defined by
\[
h(t, \theta) = \left\{
\begin{array}{cc}
f_0(2t, \theta) & \text{, if }t \in [0, 1/2],\\
f_1(2 -  2t, \theta) & \text{, if } t\in [1/2, 1].
\end{array}
\right.
\]
The map $h$ lies in $C^{0} \cap W^{1, 1}([0, 1] \times S^1; S^2)$ with $h_\theta \in L^{1 + \ep}$ and $h_t \in L^{\frac{1 + \ep}{\ep}}$, and induces a continuous map $\widehat{h}: S^2 \to S^2$. Next, the integrability of $h_\theta$ and $h_t$, along with the fact that $h(0, \cdot)$ and $h(1, \cdot)$ are constant maps, imply that we may approximate $h$ with maps $h_j \in C^{\infty}([0, 1] \times S^1; S^2)$ such that $h_j(t, \cdot) = h(0, \cdot)$ near $t = 0$, $h_j(t, \cdot) = h(1, \cdot)$ near $t  = 1$, and 
\[
\|h_j - h\|_{C^0} + \|(h_j)_{t} - h_t\|_{\frac{1 + \ep}{\ep}} + \|(h_j)_{\theta} - h_{\theta}\|_{1 + \ep} \to 0.
\]
It follows from the existence of such approximations that $\deg(\widehat{h})$ can be computed by\\ $\frac{1}{\Area_g(S^2)}\int_{[0,1] \times S^1} h^{\ast}\Vol_{g} = \frac{A(f_0) - A(f_1)}{\Area_g(S^2)}$, hence the latter is an integer. Part (b) follows from part (a) and the continuity of $t \mapsto A(F(t))$. Part (c) can be deduced by applying (b) to $F(s, \cdot) = \Pi(sf_1 + (1 - s)f_0)$, provided $\delta_0$ is chosen so that the $3\delta_0$-distance neighborhood of $S^2$ is still contained in $\cV$.
\end{proof}

\begin{rmk}\label{rmk:area-extend}
Note that Lemma~\ref{lemm:area-properties}(c) allows us to define the enclosed area functional on $\cE(u)$ by approximation by letting 
\[
A(f) = A(\widetilde{f}),
\]
where $\widetilde{f} \in \widetilde{\cE}(u)$ is such that $\|f - \widetilde{f}\|_{C^0} < \delta_0/2 $. To see that such approximations exist, let $\zeta: \RR \to [0, 1]$ be a smooth cut-off function with $\zeta(t) = 0$ for $t \leq 0$ and $\zeta(t) =1$ for $t \geq 1$. For $f \in \cE(f)$, we first erase its boundary value by considering
\[
\widehat{f}(t, \cdot) = f(t, \cdot) - \big( \zeta(t) f(1, \cdot) + (1 - \zeta(t))f(0, \cdot) \big).
\]
This can be approximated uniformly on $[0, 1] \times S^1$ by maps $\widehat{f_j} \in C^{\infty}([0, 1]\times S^1; \RR^N)$ which vanish near $\{0, 1\} \times S^1$. In particular, eventually it makes sense to define
\[
f_j = \Pi(\widehat{f_j} + \big( \zeta(t) f(1, \cdot) + (1 - \zeta(t))f(0, \cdot) \big)),
\]
which agrees with $f$ on $\{0, 1\} \times S^1$, converges uniformly to $f$, and also lies in $\widetilde{\cE}(u)$, since the map $\big( \zeta(t) f(1, \cdot) + (1 - \zeta(t))f(0, \cdot) \big)$ belongs to $C^{0} \cap W^{1, 1}([0, 1] \times S^1; \RR^N)$ and has $t$-derivative and $\theta$-derivative lying in $L^{\infty}$ and $L^{1 + \ep}$, respectively. 

It is also not hard to see that Lemma~\ref{lemm:area-properties}(a)(c) continue to hold for the extended functional, with $\widetilde{\cE}(u)$ replaced by $\cE(u)$, and that part (b) holds with the $C^{0}$-topology on $\cE(u)$ by partitioning $[0, 1]$ and applying part (c) repeatedly. We omit the details. 
\end{rmk}
Next we define the perturbations of the weighted length functional.
\begin{defi}\label{defi:perturbed-functional}
For $\kappa, \ep > 0$, and $u \in W^{1, 1 + \ep}(S^1; S^2), f \in\cE(u)$, we let
\[L_{\kappa, \ep}(u, f) = L_{\ep}(u) + \kappa \cdot A(f),\]
where
\[
L_{\ep}(u) = \int_{S^1} \big[(\ep^2 + |u'|^2)^{\frac{1 + \ep}{2}}  - \ep^{1 + \ep}\big]d\theta.
\]
\end{defi}
\noindent Here $L_{\ep}$ is a regularization of the length functional $L(u) = \int_{S^1}|u'| d\theta$. We end this preliminary section by collecting some standard estimates that will be used repeatedly. 
\begin{lemm}\label{lemm:basic-estimates}
There exists a universal constant $A_0$ such that the following hold for all $\ep \in (0, 1)$ and $u, v \in W^{1, 1 + \ep}(S^1; S^2)$.
\begin{enumerate}
\item[(a)] $L(u) \leq A_0 \ep + A_0( L_{\ep}(u))^{\frac{1}{1 + \ep}}$.
\vskip 1mm
\item[(b)] $|L_{\ep}(u) - L_{\ep}(v)| \leq A_0\big(  \|u'\|_{1 + \ep} + \|v'\|_{1 + \ep} \big)^{\frac{1 + \ep}{2}}\|u - v\|_{1, 1 + \ep}^{\frac{1 + \ep}{2}}$.
\vskip 1mm
\item[(c)] Suppose $\|u - v\|_{C^0} < \delta_0$ and let $f \in \cE(u)$ be an extension of $u$. Define $h \in \cE(v)$ by concatenating $f$ with $q: (t, \theta) \mapsto \Pi(tv(\theta) + (1- t)u(\theta))$. Then 
\[|A(f) - A(h)| \leq A_0 \|u - v\|_{C^0}\big( L(u) + L(v) \big).\]
\end{enumerate}
\end{lemm}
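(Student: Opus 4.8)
The plan is to treat the three parts separately, each relying on an elementary pointwise inequality followed by an application of Hölder. For part (a), I would start from the pointwise bound $|u'| \le (\ep^2 + |u'|^2)^{1/2}$, and compare $(\ep^2+|u'|^2)^{1/2}$ with $\big[(\ep^2+|u'|^2)^{(1+\ep)/2} - \ep^{1+\ep}\big]^{1/(1+\ep)}$. Concretely, setting $s = (\ep^2+|u'|^2)^{1/2} \ge \ep$, one checks the scalar inequality $s \le C\ep + C(s^{1+\ep} - \ep^{1+\ep})^{1/(1+\ep)}$ for $s \ge \ep$, $\ep \in (0,1)$, with $C$ universal: for $s$ bounded away from $\ep$ this is clear since $s^{1+\ep}-\ep^{1+\ep} \ge c\, s^{1+\ep}$, while for $s$ near $\ep$ the left side is near $\ep$ so the $C\ep$ term dominates. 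Integrating over $S^1$ and using $\int_{S^1} s\,d\theta \le (2\pi)^{\ep/(1+\ep)}\big(\int_{S^1} s^{1+\ep}\big)^{1/(1+\ep)}$ via Hölder (and that $t\mapsto t^{1/(1+\ep)}$ is concave, so the integral of a sum is controlled by the sum of integrals raised to that power, up to a universal constant) gives the claimed form $L(u)\le A_0\ep + A_0(L_\ep(u))^{1/(1+\ep)}$.

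For part (b), I would write $L_\ep(u) - L_\ep(v) = \int_{S^1}\big[\phi(|u'|) - \phi(|v'|)\big]d\theta$ where $\phi(r) = (\ep^2+r^2)^{(1+\ep)/2}$, and use the mean value theorem: $\phi'(r) = (1+\ep)r(\ep^2+r^2)^{(\ep-1)/2}$, so $|\phi'(r)| \le (1+\ep)(\ep^2+r^2)^{\ep/2} \le 2(1+r)^{\ep} \le 2(1 + r^\ep)$ roughly, hence $|\phi(|u'|)-\phi(|v'|)| \le C(1 + |u'|^\ep + |v'|^\ep)\big||u'|-|v'|\big| \le C(1+|u'|^\ep+|v'|^\ep)|u'-v'|$. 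Then Hölder with exponents chosen so that the factor $(1+|u'|^\ep+|v'|^\ep)$ sits in $L^{(1+\ep)/\ep}$ and $|u'-v'|$ sits in $L^{1+\ep}$ produces a bound by $C(1 + \|u'\|_{1+\ep}^\ep + \|v'\|_{1+\ep}^\ep)\|u'-v'\|_{1+\ep}$. The stated estimate has the asymmetric exponents $\tfrac{1+\ep}{2}$ on the length-type factor and on $\|u-v\|_{1,1+\ep}$; to get that form I would additionally interpolate, bounding $\|u'-v'\|_{1+\ep} \le \|u'-v'\|_{1+\ep}^{(1+\ep)/2}\cdot(\|u'\|_{1+\ep}+\|v'\|_{1+\ep})^{(1-\ep)/2}$ trivially and absorbing powers, or more simply noting that since everything is on the compact manifold $S^2$ the $C^0$-norm of $u,v$ is bounded, so low-order terms are harmless; the precise bookkeeping of exponents is routine.

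For part (c), the point is that the concatenated extension $h$ differs from $f$ only by the thin "collar" $q(t,\theta) = \Pi(tv+(1-t)u)$, which is well-defined since $\|u-v\|_{C^0} < \delta_0$ keeps the segment inside $\cV$. Thus $A(f) - A(h) = \pm\int_{[0,1]\times S^1} q^*\Vol_g$, and I would estimate this pullback integral directly: $q_t = P_{(\cdot)}(v-u)$ and $q_\theta = P_{(\cdot)}(tv' + (1-t)u') + (\text{terms from differentiating }\Pi)$, so $|q^*\Vol_g| \le C|q_t||q_\theta| \le C\|u-v\|_{C^0}(|u'| + |v'| + \|u-v\|_{C^0}|u'|+\cdots)$, using that $P$ and its derivatives are bounded on $\cV$. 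Integrating over $t\in[0,1]$ and $\theta\in S^1$, and using again $\|u-v\|_{C^0} \le \diam(S^2)$ to absorb the higher-order term, yields $|A(f)-A(h)| \le A_0\|u-v\|_{C^0}(L(u)+L(v))$. One subtlety I would address carefully: $q$ need not a priori lie in the class $\widetilde{\cE}$ with the right integrability, but since $u,v \in W^{1,1+\ep}$ and $v-u$ is independent of $t$, one has $q_\theta \in L^{1+\ep}$ and $q_t \in L^{1+\ep} \subset$ (anything needed), so $h \in \cE(v)$ and the extended area functional of Remark 2.2 applies. The main obstacle is purely notational — getting the exponents in part (b) to match the asymmetric form stated — rather than conceptual; all three parts are consequences of one scalar inequality plus Hölder.
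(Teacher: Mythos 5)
Parts (a) and (c) of your proposal are essentially fine and track the paper's own argument closely. For (a), the scalar inequality $s\le\ep+(s^{1+\ep}-\ep^{1+\ep})^{1/(1+\ep)}$ (with constant $1$, no case analysis needed) is just the subadditivity $(a+b)^{\alpha}\le a^{\alpha}+b^{\alpha}$ for $\alpha\in(0,1)$ applied to $s^{1+\ep}=(s^{1+\ep}-\ep^{1+\ep})+\ep^{1+\ep}$; after integrating, H\"older applied to $\int(s^{1+\ep}-\ep^{1+\ep})^{1/(1+\ep)}d\theta$ gives the result. (The paper does the same two steps in the opposite order.) For (c), your computation matches the paper's, and your observation that $q_t\in L^{\infty}$, $q_\theta\in L^{1+\ep}$ is exactly what is needed to place $h$ in $\widetilde{\cE}(v)$; note though that $q_\theta = P_{tv+(1-t)u}(tv'+(1-t)u')$ exactly, with no extra terms from differentiating $\Pi$.

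For (b) there is a genuine gap, not a bookkeeping issue. Your mean-value argument yields, after H\"older with exponents $\frac{1+\ep}{\ep}$ and $1+\ep$, a bound of the form
\[
|L_\ep(u)-L_\ep(v)|\le C\big(1+\|u'\|_{1+\ep}^{\ep}+\|v'\|_{1+\ep}^{\ep}\big)\|u'-v'\|_{1+\ep}.
\]
Writing $M=\|u'\|_{1+\ep}+\|v'\|_{1+\ep}$ and $\delta=\|u'-v'\|_{1+\ep}\le M$, this is $\sim(1+M^{\ep})\delta$, while the stated bound is $\sim M^{(1+\ep)/2}\delta^{(1+\ep)/2}$. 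Your interpolation step $\delta\le\delta^{(1+\ep)/2}M^{(1-\ep)/2}$ turns your bound into $(1+M^{\ep})M^{(1-\ep)/2}\delta^{(1+\ep)/2}$, and matching the stated prefactor $M^{(1+\ep)/2}$ would require $(1+M^{\ep})\le A_0 M^{\ep}$, which fails as $M\to 0$ (take $M=\delta\to 0$: your bound is $\sim\delta$, the target is $\sim\delta^{1+\ep}$). So your estimate is strictly weaker precisely in the regime of short curves, and no amount of rearranging the exponents recovers the lemma. The paper avoids this entirely by using the elementary inequality $|t^{\alpha}-s^{\alpha}|\le|t-s|^{\alpha}$, $\alpha\in(0,1)$ (their equation~\eqref{eq:fake-triangle-2}), with $t=\ep^2+|u'|^2$, $s=\ep^2+|v'|^2$, $\alpha=\frac{1+\ep}{2}$: pointwise this gives
\[
\big|(\ep^2+|u'|^2)^{\frac{1+\ep}{2}}-(\ep^2+|v'|^2)^{\frac{1+\ep}{2}}\big|\le\big||u'|^2-|v'|^2\big|^{\frac{1+\ep}{2}}\le\big(|u'|+|v'|\big)^{\frac{1+\ep}{2}}|u'-v'|^{\frac{1+\ep}{2}},
\]
and a single Cauchy--Schwarz on $S^1$ then produces exactly the stated product of $\frac{1+\ep}{2}$-powers. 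This pointwise factorization is the idea your approach misses; a linear (MVT) bound on the integrand cannot reproduce the H\"older-$\frac{1+\ep}{2}$ structure of the conclusion.
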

\begin{proof}
For part (a) we first use H\"older's inequality to get $L(u) \leq C\big( L_{\ep}(u) + \ep^{1 + \ep} \big)^{\frac{1}{1 + \ep}}$. The conclusion then follows from the fact that for $\alpha \in (0, 1)$ we have
\begin{equation}\label{eq:fake-triangle}
(t + s)^{\alpha} \leq t^{\alpha} + s^{\alpha} \text{ for all }t, s \geq 0.
\end{equation}
For part (b), note that the above inequality implies, still for $\alpha \in (0, 1)$, that
\begin{equation}\label{eq:fake-triangle-2}
|t^\alpha - s^\alpha| \leq |t - s|^{\alpha} \text{ for all }t, s \geq 0.
\end{equation}
Applying this with $\alpha = \frac{1 + \ep}{2}$ (recall that $\ep \in (0, 1)$ by assumption), we obtain
\begin{align*}
|L_{\ep}(u) - L_{\ep}(v)| &\leq \int_{S^1}\big| (\ep^2 + |u'|^2)^{\frac{1 + \ep}{2}} -  (\ep^2 + |v'|^2)^{\frac{1 + \ep}{2}} \big| d\theta\\
&\leq \int_{S^1} \big||u'|^2 - |v'|^2 \big|^{\frac{1 + \ep}{2}} d\theta =  \int_{S^1} \big||u'| +  |v'| \big|^{\frac{1 + \ep}{2}} |u'  - v'|^{\frac{1 + \ep}{2}} d\theta \\
&\leq \big( \int_{S^1} (|u'| + |v'|)^{1 + \ep} \big)^{\frac{1}{2}} \big( \int_{S^1}|u' - v'|^{1 + \ep} \big)^{\frac{1}{2}},
\end{align*}
which implies the result. For part (c), since $q \in C^0 \cap W^{1, 1}([0, 1] \times S^1; S^2)$ with $q_\theta \in L^{1 + \ep}([0, 1] \times S^1; S^2)$ and $q_{t} \in L^{\infty}([0, 1] \times S^1; S^2)$, it's not hard to see that $A(h) - A(f) = \int_{[0, 1]\times S^1} q^{\ast}\Vol_{g}$. That is,
\begin{align*}
|A(f) - A(h)| &\leq \int_{0}^1 \int_{S^1} |(\Vol_g)_{q(t, \theta)}(P_{tv + (1 - t)u}(v -u), P_{tv + (1- t)u}(tv' + (1 - t)u'))| d\theta dt\\
& \leq C \int_{S^1} |v - u|(|u'| + |v'|) d\theta,
\end{align*}
this clearly gives the desired estimate.
\end{proof}

To state the next lemma, given $\ep \in (0, 1)$, for convenience we define $F:\RR^{N} \to \RR$ by 
\[
F(y) = (\ep^2 + |y|^2)^{\frac{1 + \ep}{2}}.
\]
Of course the derivative of $F$ is given by $(dF)_y = (1 + \ep)\frac{y}{(\ep^2 + |y|^2)^{\frac{1 - \ep}{2}}}$. We collect two standard estimates below for later use. The proofs are included in Appendix \ref{sec:standard-estimates} for the sake of completeness.
\begin{lemm}\label{lemm:coercive} 
For all $y_0, y_1 \in \RR^N$, there hold
\begin{equation}\label{eq:F-estimate1}
\big((dF)_{y_1} - (dF)_{y_0}\big) \cdot (y_1 - y_0) \geq c_{\ep}\frac{|y_1 - y_0|^2}{(\ep^2 + |y_1|^2 + |y_0|^2)^{\frac{1 - \ep}{2}}}.
\end{equation}
\begin{equation}\label{eq:F-estimate2}
|(dF)_{y_1} - (dF)_{y_0}| \leq C(\ep^2 + |y_1|^2 + |y_0|^2)^{\ep/4} |y_1 - y_0|^{\ep/2}.
\end{equation}
\end{lemm}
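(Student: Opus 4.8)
The plan is to establish both \eqref{eq:F-estimate1} and \eqref{eq:F-estimate2} by reducing to the scalar function $\phi(r) = (\ep^2 + r)^{(1+\ep)/2}$ of $r = |y|^2 \geq 0$ and exploiting the explicit form $(dF)_y = (1+\ep)\,y/(\ep^2 + |y|^2)^{(1-\ep)/2}$, treating $dF$ as (a constant multiple of) the gradient of the convex function $\Phi(y) := \tfrac{2}{1+\ep}F(y)$, whose Hessian we can compute directly. First I would write $(dF)_{y_1} - (dF)_{y_0} = \int_0^1 \tfrac{d}{ds}(dF)_{y_s}\,ds$ along the segment $y_s = y_0 + s(y_1 - y_0)$, so that $\big((dF)_{y_1} - (dF)_{y_0}\big)\cdot(y_1 - y_0) = \int_0^1 (D^2F)_{y_s}(y_1 - y_0, y_1 - y_0)\,ds$; thus \eqref{eq:F-estimate1} follows once I show a pointwise lower bound $(D^2F)_y(\xi,\xi) \geq c_\ep |\xi|^2 (\ep^2 + |y|^2)^{-(1-\ep)/2}$ for all $y, \xi$, after noting $(\ep^2 + |y_s|^2) \leq \ep^2 + |y_0|^2 + |y_1|^2$ along the segment (since $|y_s| \le \max(|y_0|,|y_1|)$). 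Computing, $(D^2F)_y = (1+\ep)(\ep^2+|y|^2)^{-(1-\ep)/2}\big[\Id - (1-\ep)\tfrac{y\otimes y}{\ep^2+|y|^2}\big]$, and since $0 \le \tfrac{|y|^2}{\ep^2+|y|^2} < 1$ and $1 - \ep \in (0,1)$, the bracketed matrix has eigenvalues in $[\ep, 1]$, giving the claim with $c_\ep = (1+\ep)\ep$ (or any $c_\ep \le \ep$).

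For \eqref{eq:F-estimate2} the plan is a direct estimate of the same Hessian. From the above, $|(D^2F)_y| \leq C(\ep^2 + |y|^2)^{-(1-\ep)/2}$ with $C$ universal, so along the segment $|(dF)_{y_1} - (dF)_{y_0}| \leq C|y_1 - y_0|\int_0^1 (\ep^2 + |y_s|^2)^{-(1-\ep)/2}\,ds$. This alone is not quite the desired bound, so I would instead interpolate: write the left side as $|(dF)_{y_1} - (dF)_{y_0}|^{1-\ep/2}\cdot|(dF)_{y_1} - (dF)_{y_0}|^{\ep/2}$. For the first factor use the crude bound $|(dF)_y| = (1+\ep)|y|(\ep^2+|y|^2)^{-(1-\ep)/2} \le (1+\ep)(\ep^2+|y|^2)^{\ep/2}$, so $|(dF)_{y_1} - (dF)_{y_0}| \le C(\ep^2 + |y_1|^2 + |y_0|^2)^{\ep/2}$; for the second factor use the Lipschitz-type bound $|(dF)_{y_1} - (dF)_{y_0}| \le C|y_1-y_0|\,\min\big((\ep^2+|y_1|^2)^{-(1-\ep)/2}, (\ep^2+|y_0|^2)^{-(1-\ep)/2}\big)^{?}$ — more cleanly, since $(\ep^2 + |y_s|^2)^{-(1-\ep)/2} \le (\ep^2)^{-(1-\ep)/2} = \ep^{-(1-\ep)}$ is not $\ep$-uniform, I would instead bound $(\ep^2+|y_s|^2)^{-(1-\ep)/2} \le (\ep^2 + |y_1|^2 + |y_0|^2)^{-(1-\ep)/2}\cdot(\text{bounded factor})$ only when $y_s$ stays comparable to the larger endpoint, and otherwise use the triangle-type bound $|(dF)_{y_1}-(dF)_{y_0}| \le |(dF)_{y_1}| + |(dF)_{y_0}| \le C(\ep^2+|y_1|^2+|y_0|^2)^{\ep/2}$ together with $|y_1 - y_0|^{\ep/2} \cdot(\ep^2+|y_1|^2+|y_0|^2)^{-\ep/4}$ coming from Hölder against the small exponent. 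Raising the Lipschitz bound $|(dF)_{y_1} - (dF)_{y_0}| \le C|y_1 - y_0|(\ep^2+|y_1|^2+|y_0|^2)^{-(1-\ep)/2}$ to the power $\ep/2$ and the growth bound to the power $1 - \ep/2$, then multiplying, produces $C|y_1-y_0|^{\ep/2}(\ep^2+|y_1|^2+|y_0|^2)^{(1-\ep/2)\ep/2 - (1-\ep)\ep/4}$, and the exponent on the large factor simplifies to exactly $\ep/4$, which is \eqref{eq:F-estimate2}.

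The one subtlety I anticipate — and the main thing to handle with care — is ensuring all constants in \eqref{eq:F-estimate1} and \eqref{eq:F-estimate2} are genuinely \emph{uniform in} $\ep \in (0,1)$ as the statement demands (or at least, for \eqref{eq:F-estimate1}, that $c_\ep$ depends only on $\ep$ in an explicit way, consistent with the statement). The Hessian lower bound gives $c_\ep = (1+\ep)\ep$, which degenerates as $\ep \to 0$ — this is expected and acceptable since the bound is written with $c_\ep$. For \eqref{eq:F-estimate2}, I need the final $C$ to be $\ep$-independent; the interpolation above is designed precisely so that the dangerous factor $\ep^{-(1-\ep)}$, which appears when $|y_0|, |y_1| \lesssim \ep$, is cancelled. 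In that regime $\ep^2 + |y_1|^2 + |y_0|^2 \approx \ep^2$, and the bound to prove reads $|(dF)_{y_1} - (dF)_{y_0}| \le C\ep^{\ep/2}|y_1-y_0|^{\ep/2}$; since there $(dF)$ is roughly $(1+\ep)\ep^{-(1-\ep)}y$, i.e. Lipschitz with constant $\approx \ep^{-(1-\ep)}$, one gets $|(dF)_{y_1}-(dF)_{y_0}| \le C\ep^{-(1-\ep)}|y_1 - y_0|$, and interpolating this against the uniform bound $|(dF)_y| \le C\ep^{\ep/2}$ (valid since $|y| \lesssim \ep$) with weights $\ep/2$ and $1-\ep/2$ yields $C\ep^{-(1-\ep)\ep/2 + \ep/2\cdot\ep/2}|y_1-y_0|^{\ep/2} = C\ep^{\ep/2 - \ep/2 + \ep^2/2 + \ep^2/4 - \ep/2 \cdot 0}$... — I will double-check this bookkeeping, but the point is that the powers of $\ep$ combine to a nonnegative power times a bounded quantity, so no blow-up occurs. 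I would organize the final write-up as: (i) compute $D^2F$; (ii) read off eigenvalue bounds to get \eqref{eq:F-estimate1} via integration along segments; (iii) prove the two one-sided bounds $|(dF)_{y_1} - (dF)_{y_0}| \le C(\ep^2+|y_1|^2+|y_0|^2)^{\ep/2}$ and $|(dF)_{y_1} - (dF)_{y_0}| \le C(\ep^2+|y_1|^2+|y_0|^2)^{-(1-\ep)/2}|y_1 - y_0|$; (iv) interpolate with weights $1-\ep/2$ and $\ep/2$ and simplify the exponent to $\ep/4$ to conclude \eqref{eq:F-estimate2}.
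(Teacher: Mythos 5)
Your handling of \eqref{eq:F-estimate1} is essentially the paper's: write the difference as an integral of the Hessian along the segment, compute $(D^2F)_y$, and bound the smallest eigenvalue from below by $\ep(1+\ep)(\ep^2+|y|^2)^{-(1-\ep)/2}$. That part is correct and complete.

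For \eqref{eq:F-estimate2} you take a genuinely different route: instead of the paper's direct algebraic decomposition of $(dF)_{y_1}-(dF)_{y_0}$ into three pieces $I+II+III$ estimated separately, you propose to interpolate between a ``growth'' bound $|(dF)_{y_1}-(dF)_{y_0}|\le C(\ep^2+|y_1|^2+|y_0|^2)^{\ep/2}$ and a ``Lipschitz'' bound $|(dF)_{y_1}-(dF)_{y_0}|\le C\,|y_1-y_0|\,(\ep^2+|y_1|^2+|y_0|^2)^{-(1-\ep)/2}$, with weights $1-\ep/2$ and $\ep/2$. The exponent arithmetic you display does check out: $(1-\ep/2)\tfrac{\ep}{2}-(1-\ep)\tfrac{\ep}{4}=\tfrac{\ep}{4}$. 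The growth bound also holds immediately with a universal constant from $|(dF)_y|\le(1+\ep)(\ep^2+|y|^2)^{\ep/2}$. So the strategy would work --- \emph{if} you had the Lipschitz bound.

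That Lipschitz bound, with a constant uniform in $\ep$, is the genuine gap. It does not follow from the Hessian-integral estimate you set up: integrating $|(D^2F)_{y_s}|\le C(\ep^2+|y_s|^2)^{-(1-\ep)/2}$ along the segment gives a factor $\int_0^1(\ep^2+|y_s|^2)^{-(1-\ep)/2}\,ds$ which is \emph{not} bounded by a universal multiple of $(\ep^2+|y_1|^2+|y_0|^2)^{-(1-\ep)/2}$; in the extremal case $y_0=-y_1=-Me_1$ (with $M\gg\ep$) this integral is of order $\ep^{-1}M^{\ep-1}$ rather than $M^{\ep-1}$. The problem is not, as you diagnose in your final paragraph, the regime $|y_0|,|y_1|\lesssim\ep$ --- it is that the segment may pass near the origin even when both endpoints are large, so $|y_s|$ along the segment can be much smaller than $|y_0|+|y_1|$. (Your fallback for that case, falling back to the triangle-inequality bound $\le C(\ep^2+|y_1|^2+|y_0|^2)^{\ep/2}$, does not recover the target exponent $\ep/4$ unless $|y_1-y_0|$ is comparable to $(\ep^2+|y_1|^2+|y_0|^2)^{1/2}$, which is exactly what needs to be quantified.) Two repairs are possible. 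Either prove the Lipschitz bound directly by a decomposition such as $\phi(y_1)-\phi(y_0)=(y_1-y_0)(\ep^2+|y_1|^2)^{-(1-\ep)/2}+y_0\big[(\ep^2+|y_1|^2)^{-(1-\ep)/2}-(\ep^2+|y_0|^2)^{-(1-\ep)/2}\big]$ (assuming $|y_1|\ge|y_0|$) and estimate each piece --- but this is no shorter than the paper's $I+II+III$ and defeats the purpose of interpolating. Or accept the weaker bound with constant $C/\ep$ (which still requires a careful split of the segment into the part near and far from the origin), and observe that raising it to the power $\ep/2$ only costs a factor $(\ep^{-1})^{\ep/2}$, which is bounded on $(0,1)$. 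Either way, you must actually carry out the estimate; as written, ``I will double-check this bookkeeping'' is where the argument stops being a proof.
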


\subsection{Local reduction}\label{subsec:local-reduct}
The fact that $L_{\kappa, \ep}$ depends both on the map $u$ and the extension $f$ is a major technical point that we need to keep track of throughout the paper. Fortunately, we may eliminate the $f$-dependence locally on simply-connected neighborhoods. Specifically, given a simply-connected open set $\cA \subset W^{1, 1 + \ep}(S^1; S^2)$ and a map $u_0 \in \cA$, along with an extension $f_0 \in \cE(u_0)$, then for any other $u \in \cA$ we have by connectedness a path $h: [0, 1] \to \cA$ leading from $u_0$ to $u$. Concatenating $f_0$ with the map $(t, \theta) \mapsto h(t, \theta)$ yields an extension $f_u \in \cE(u)$, and we define
\[
L^{\cA}_{\kappa, \ep}(u) = L_{\kappa, \ep}(u, f_u).
\]
We now verify that $L^{\cA}_{\kappa, \ep}$ is well-defined, $C^{1}$-functional on $\cA$.
\begin{lemm}\label{lemm:local-reduct}
\begin{enumerate}
\item[(a)] $L^{\cA}_{\kappa, \ep}$ is well-defined. That is, the choice of path $h$ is irrelevant.
\item[(b)] $L^{\cA}_{\kappa, \ep}$ is a $C^{1}$-functional on $\cA$.
\end{enumerate}
\end{lemm}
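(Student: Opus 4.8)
The plan is to establish (a) first, since well-definedness feeds into the proof of (b). For part (a), suppose $h_0, h_1 : [0,1] \to \cA$ are two paths from $u_0$ to $u$, producing extensions $f_u^{(0)}, f_u^{(1)} \in \cE(u)$ by concatenation with $f_0$. Since $\cA$ is simply connected, the loop formed by running $h_0$ forward and $h_1$ backward is contractible in $\cA$; let $H : [0,1]^2 \to \cA$ be such a homotopy fixing the endpoints $u_0$ and $u$. Concatenating $f_0$ with $H(s, \cdot, \cdot)$ gives, for each $s$, an extension of $u$, and the resulting map $s \mapsto (\text{this extension})$ is a continuous path in $\cE(u)$ with the $C^0$-topology (here I use that the $C^0$-norm of a concatenation is controlled by the sup over the pieces, and that $H$ is continuous into $W^{1,1+\ep} \hookrightarrow C^0$). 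Applying the extended version of Lemma~\ref{lemm:area-properties}(b) noted in Remark~\ref{rmk:area-extend} (valid on $\cE(u)$ with the $C^0$-topology), $A$ is constant along this path, so $A(f_u^{(0)}) = A(f_u^{(1)})$. Since $L_\ep(u)$ does not depend on the extension, $L^{\cA}_{\kappa,\ep}(u)$ is independent of the choice of path.

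For part (b), I would work in the charts $(\cB_{u_1}, \Theta_{u_1})$ of the given atlas. Fix $u_1 \in \cA$ and an extension $f_{u_1} \in \cE(u_1)$ obtained as above. For $\psi \in \cB_{u_1} \cap \cT_{u_1}$ small, $u = \Theta_{u_1}(\psi) = \Pi(u_1 + \psi)$ lies in $\cA$ (shrinking $\cB_{u_1}$ if needed), and I claim
\[
L^{\cA}_{\kappa,\ep}(\Theta_{u_1}(\psi)) = L_\ep(\Pi(u_1 + \psi)) + \kappa\big(A(f_{u_1}) + A(q_\psi)\big),
\]
where $q_\psi : (t,\theta) \mapsto \Pi(u_1(\theta) + t\psi(\theta))$ is the straight-line-then-project homotopy from $u_1$ to $u$, and $A(q_\psi) = \int_{[0,1]\times S^1} q_\psi^{\ast}\Vol_g$. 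This identity holds because $q_\psi$ is a valid path in $\cA$ realizing $u$ from $u_1$ (for $\|\psi\|$ small), so well-definedness from part (a) lets us use it to compute $A(f_u)$. Now both remaining terms are manifestly smooth-ish in $\psi$: the map $\psi \mapsto L_\ep(\Pi(u_1 + \psi))$ is $C^1$ from $\cT_{u_1}$ to $\RR$ because $F(y) = (\ep^2 + |y|^2)^{(1+\ep)/2}$ is $C^1$ with derivative $(dF)_y = (1+\ep) y (\ep^2+|y|^2)^{-(1-\ep)/2}$ obeying the Hölder bound~\eqref{eq:F-estimate2}, and $\Pi$ is smooth with bounded derivatives, so the chain rule plus a dominated-convergence / Hölder-continuity argument (exactly of the type in Lemma~\ref{lemm:basic-estimates}(b)) gives continuous Gâteaux derivative, hence $C^1$. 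For the area term, $\psi \mapsto A(q_\psi)$ is a smooth function of $\psi$: writing out $q_\psi^{\ast}\Vol_g$ in coordinates, the integrand is a fixed smooth function of $(q_\psi, \partial_t q_\psi, \partial_\theta q_\psi)$, and $\partial_t q_\psi, \partial_\theta q_\psi$ depend on $\psi$ through bounded smooth maps composed with $\psi$ and $\psi'$; one checks the resulting functional is $C^1$ on $\cT_{u_1}$ with the $W^{1,1+\ep}$-norm (the $\theta$-derivative appears linearly-times-bounded, so the required integrability matches, and the $t$-integral is over a compact interval). Since the transition maps of the atlas are smooth, $C^1$-ness in one chart around each point gives a globally $C^1$ functional on $\cA$.

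The main obstacle I anticipate is the area term in (b): one must be careful that $A(q_\psi)$, though defined by an integral over $[0,1]\times S^1$, genuinely depends $C^1$-continuously on $\psi$ in the $W^{1,1+\ep}$-topology rather than some stronger topology. The key point is that $q_\psi^{\ast}\Vol_g$ involves $\partial_\theta q_\psi = P_{u_1 + t\psi}(u_1' + t\psi')$ and $\partial_t q_\psi = P_{u_1+t\psi}(\psi)$; the first is linear in $u_1' + t\psi'$ with a coefficient ($P$ composed with $u_1 + t\psi$) that is bounded and depends on $\psi$ only through $\psi$ itself (not $\psi'$), while the second involves only $\psi$, no derivative. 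Hence $A(q_\psi)$ is, schematically, $\int (\text{bounded, }C^1\text{ in }\psi) \cdot (u_1' + t\psi') \, dt\, d\theta$, which is a $C^1$ function of $\psi \in W^{1,1+\ep}$ by the usual Nemytskii-operator estimates — differentiating under the integral and using that $\psi \mapsto \psi'$ is bounded $W^{1,1+\ep} \to L^{1+\ep}$ while the coefficients converge in every $L^p$. I would also double-check that shrinking $\cB_{u_1}$ keeps $u_1 + t\psi$ inside $\cV$ for all $t \in [0,1]$, so that $\Pi$ and $P$ are defined along $q_\psi$, and that $q_\psi$ indeed traces out a path lying in $\cA$ so part~(a) applies; both are automatic for $\|\psi\|_{1,1+\ep}$ small since the relevant inclusions are open.
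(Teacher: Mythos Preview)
Your proposal is correct and follows essentially the same route as the paper. For (a) you spell out what the paper compresses into one line (``follows from Lemma~\ref{lemm:area-properties}(b) and simply-connectedness''): a path-homotopy in $\cA$ yields a $C^0$-continuous family of extensions in $\cE(u)$, along which $A$ is constant by the extended version of Lemma~\ref{lemm:area-properties}(b) in Remark~\ref{rmk:area-extend}. For (b), your chart computation with the straight-line homotopy $q_\psi(t,\theta)=\Pi(u_1+t\psi)$ is exactly the paper's $\widetilde f(t,\theta)=\Theta(t\psi)$, your decomposition into the $L_\ep$-piece and the area piece matches equation~\eqref{eq:difference-for-C1}, and your analysis of each term (the $F$-estimate~\eqref{eq:F-estimate2} for $L_\ep$, and the ``linear-in-$\psi'$ times bounded-in-$\psi$'' structure of $q_\psi^\ast\Vol_g$ for the area) is the same mechanism the paper uses, only the paper writes the area integrand out in the explicit coordinate form $\int a_{ij}(u+t\psi)(u^j_\theta+t\psi^j_\theta)\psi^i$.
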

\begin{proof}
Part (a) follows from Lemma~\ref{lemm:area-properties}(b) and simply-connectedness. Next, take $u \in \cA$ and consider a chart $(\cB_u, \Theta_u)$ centered at $u$. Below we drop the subscripts in $\cB_u, \Theta_u$ for brevity. To prove (b), it suffices to show that $L_{\kappa, \ep}^{\cA} \circ \Theta$ is $C^1$ on $\cB$. To that end, note that for $\psi \in \cB$, letting $\widetilde{f}(t, \theta) = \Theta(t\psi(\theta))$, we have
\begin{equation}\label{eq:difference-for-C1}
L_{\kappa, \ep}^{\cA}(\Theta (\psi)) = \int_{S^1} (\ep^2 + |\Theta(\psi)'|^2)^{\frac{1 + \ep}{2}}d\theta + \kappa\int_{[0, 1]\times S^1}\widetilde{f}^{\ast}\Vol_g + \kappa A(f_u) - 2\pi \ep^{1 + \ep}.
\end{equation}
The map $\psi \to \Theta(\psi)$ is in fact smooth from $\cB$ to $W^{1, 1 + \ep}(S^1; \RR^N)$. On the other hand, using~\eqref{eq:F-estimate2} from Lemma~\ref{lemm:coercive}, it is not hard to see that 
\[
v \mapsto \int_{S^1}(\ep^2 + |v'|^2)^{\frac{1 + \ep}{2}} d\theta
\]
defines a $C^1$-functional $W^{1, 1 + \ep}(S^1; \RR^N) \to \RR$. Hence the first term on the right-hand side of~\eqref{eq:difference-for-C1} is $C^1$. As for the second term, we write $\{\paop{y^i}\}$ for the coordinates in $\RR^N$, and introduce the functions $a_{ij}: \cV \to \RR$ defined by
\[
a_{ij}(y) = (\Vol_g)_y(P_y(\paop{y^i}), P_y(\paop{y^j})).
\]
Note that these are smooth functions. Moreover, since we may write
\[
\int_{[0, 1]\times S^1}\widetilde{f}^{\ast}\Vol_g = \int_{[0, 1] \times S^1} a_{ij}(u + t\psi) (u^j_\theta + t\psi^j_\theta) \psi^{i} d\theta dt,
\]
it is not hard to see using the smoothness of $a_{ij}$ and the embedding $W^{1, 1 + \ep} \to C^0$ along with H\"older's inequality that the second term on the right-hand side of~\eqref{eq:difference-for-C1} is $C^1$ as well.
\end{proof}

\begin{defi}\label{defi:local-reduct}
The functional $L^{\cA}_{\kappa, \ep}$ is called the \textit{local reduction} of $L_{\kappa, \ep}$ on $\cA$ induced by $u$ and $f$. Note that we are suppressing from the notation its dependence on $u$ and $f$, as these should always be clear from the context. Note also that, by Lemma~\ref{lemm:area-properties}(a) and Lemma~\ref{lemm:local-reduct}(b), any two local reductions differ by a fixed integer multiple of $\kappa \Vol_g(S^2)$ on any connected subset of their common domain.
\end{defi}

\subsection{First variation}
In this section we compute the first variation of $L_{\kappa, \ep}$. We do so with the help of local reductions, and then show that the choice of reduction is irrelevant, and consequently the first variation makes sense globally. To begin, let $\cA \subset W^{1, 1 + \ep}(S^1; S^2)$ be a simply-connected open set on which a local reduction $L^{\cA}_{\kappa, \ep}$ is defined. Since $L^{\cA}_{\kappa, \ep}$ is a $C^1$-functional on $\cA$, at each $u \in \cA$ it has a differential, denoted by $\delta L^{\cA}_{\kappa, \ep}(u)$, which is a bounded linear functional on $\cT_u$, and can be computed by
\begin{equation}\label{eq:first-var-reduct}
\delta L^{\cA}_{\kappa, \ep}(u)(\psi) = \frac{d}{dt}\big|_{t =0} L^{\cA}_{\kappa, \ep}(\Pi(u + t\psi)).
\end{equation}
\begin{defi}
For $u \in W^{1, 1 + \ep}(S^1; S^2)$ we define $\delta L_{\kappa, \ep}(u): \cT_u \to \RR$ by letting 
\[
\delta L_{\kappa, \ep}(u) = \delta L^{\cA}_{\kappa, \ep}(u),
\]
where $L^{\cA}_{\kappa, \ep}$ is any local reduction on a simply-connected neighborhood $\cA$ of $u$. Note that such a neighborhood always exists since $W^{1, 1 + \ep}(S^1; S^2)$ is a manifold. Also, in view of the last remark in Definition~\ref{defi:local-reduct} and the equation~\eqref{eq:first-var-reduct}, $\delta L_{\kappa, \ep}(u)$ is well-defined. 
\end{defi}
\begin{defi}
A map $u \in W^{1, 1 + \ep}(S^1; S^2)$ is a critical point of $L_{\kappa, \ep}$ if $\delta L_{\kappa, \ep}(u) = 0$. 
\end{defi}

To compute $\delta L_{\kappa, \ep}(u)(\psi)$ for $u \in W^{1, 1 + \ep}(S^1; S^2)$ and $\psi \in \cT_u$, we fix $f \in \cE(u)$, consider the local reduction induced by $(u, f)$ on a simply-connected neighborhood $\cA$ of $u$, and carry out the differentiation in~\eqref{eq:first-var-reduct}. Note that for sufficiently small $t$, letting $\widetilde{f}(s, \theta) = \Pi(u + s\psi)$ for $(s, \theta) \in [0, t] \times S^1$,  we have 
\begin{align}
&L^{\cA}_{\kappa, \ep}(\Pi(u + t\psi)) - L^{\cA}_{\kappa, \ep}(u) \nonumber \\
=\ & L_{\ep}(\Pi(u + t\psi)) - L_{\ep}(u) +\kappa \cdot \int_{0}^t \big[ \int_{S^1} (\Vol_g)_{\widetilde{f}}(\widetilde{f}_s, \widetilde{f}_\theta) d\theta\big] ds.\label{eq:quotient-for-first-var}
\end{align}
The $t$-derivative of the integral term at $t = 0$ is equal to
\begin{equation}\label{eq:area-derivative}
\kappa \int_{S^1} (\Vol_g)_{u} \big( P_u(\psi), u' \big) d\theta = \kappa\int_{S^1} \psi \cdot Q_u(u') d\theta,
\end{equation}
where we used the fact that $(\Vol_g)_u(X, Y) = X \cdot Q_u(Y)$ and $P_u(\psi) = \psi$ to get the equality. On the other hand, for the terms involving $L_\ep$ on the right-hand side of~\eqref{eq:quotient-for-first-var}, we have 
\begin{align}
\frac{d}{dt}\Big|_{t = 0}L_\ep (\Pi(u + t\psi)) 
&= (1 + \ep) \int_{S^1} (\ep^2 + |u'|^2 )^{\frac{\ep - 1}{2}} u' \cdot \psi' d\theta. \label{eq:L-derivative}
\end{align}
Here no projection is required since $\psi \in \cT_u$. Putting together~\eqref{eq:area-derivative} and~\eqref{eq:L-derivative}, we obtain the following first variation formula for $L_{\kappa, \ep}$:
\begin{equation}\label{eq:first-var}
\delta L_{\kappa, \ep}(u)(\psi) = \int_{S^1} (1 + \ep)\frac{ u' \cdot \psi' }{(\ep^2 + |u'|^2)^{\frac{1 - \ep}{2}}} + \kappa \psi \cdot Q_u(u')  d\theta.
\end{equation}
The norm of $\delta L_{\kappa, \ep}(u)$, which we denote by $\|\delta L_{\kappa, \ep}(u)\|$, is defined by duality.

It will be convenient for us to have a version of the first variation formula where the test function $\psi$ is allowed to vary in $W^{1, 1 + \ep}(S^1; \RR^N)$. To that end we also define 
\[
G_{\kappa, \ep}(u) = \delta L_{\kappa, \ep}(u) \circ P_u.
\]
We define the norm of $G_{\kappa, \ep}(u)$ by 
\[
\| G_{\kappa, \ep}(u) \| = \sup \{ G_{\kappa, \ep}(u)(\psi)\ |\ \psi \in W^{1, 1 + \ep}(S^1; \RR^N), \|\psi\|_{1, 1 + \ep} \leq 1 \}.
\]
The following lemma establishes a relationship between $\|G_{\kappa, \ep}(u)\|$ and $\| \delta L_{\kappa, \ep}(u)\|$. For the reader's convenience we include a proof in Appendix \ref{sec:standard-estimates}.
\begin{lemm}\label{lemm:G-dL-relation} 
There exists a universal constant $A_1$ such that for $\kappa > 0$ and $\ep \in (0, 1/2)$, we have 
\[\|\delta L_{\kappa, \ep}(u)\| \leq \|G_{\kappa, \ep}(u)\| \leq A_1 (1 + \|u\|_{1, 1+ \ep}) \|\delta L_{\kappa, \ep}(u)\|.
\]
for all $u \in W^{1, 1 + \ep}(S^1; S^2)$.
\end{lemm}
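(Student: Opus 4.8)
The plan is to exploit two facts: that $P_u$ acts as the identity on $\cT_u$, and that $P_u$ extends to a bounded linear operator on all of $W^{1,1+\ep}(S^1;\RR^N)$ whose operator norm is controlled by $1+\|u\|_{1,1+\ep}$.

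For the first inequality, note that if $\psi \in \cT_u$ then $\psi(\theta)\in T_{u(\theta)}S^2$ for every $\theta$, so $P_{u(\theta)}\psi(\theta)=\psi(\theta)$, i.e. $P_u\psi=\psi$; hence $G_{\kappa,\ep}(u)(\psi)=\delta L_{\kappa,\ep}(u)(\psi)$. Since the Finsler norm on $\cT_u$ is the restriction of $\|\cdot\|_{1,1+\ep}$, the supremum defining $\|\delta L_{\kappa,\ep}(u)\|$ is taken over the subset $\{\psi\in\cT_u:\|\psi\|_{1,1+\ep}\le 1\}$ of the unit ball over which $\|G_{\kappa,\ep}(u)\|$ is computed, which gives $\|\delta L_{\kappa,\ep}(u)\|\le\|G_{\kappa,\ep}(u)\|$ at once.

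For the second inequality, fix $\psi\in W^{1,1+\ep}(S^1;\RR^N)$ with $\|\psi\|_{1,1+\ep}\le 1$. Since $P_u\psi\in\cT_u$, we have $G_{\kappa,\ep}(u)(\psi)=\delta L_{\kappa,\ep}(u)(P_u\psi)\le\|\delta L_{\kappa,\ep}(u)\|\,\|P_u\psi\|_{1,1+\ep}$ (and likewise for $-\psi$), so it suffices to show $\|P_u\psi\|_{1,1+\ep}\le A_1(1+\|u\|_{1,1+\ep})$ with $A_1$ universal. Because $\Pi$ has bounded derivatives of all orders on $\cV$ — in particular $P=d\Pi$ and $dP$ are bounded there by a universal constant — we get the pointwise bounds $|P_u\psi|\le C|\psi|$ and, by the chain and product rules, $|(P_u\psi)'|=|(dP)_u(u')\psi+P_u\psi'|\le C\big(|u'|\,|\psi|+|\psi'|\big)$. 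Every resulting term is immediately controlled in $L^{1+\ep}$ by $\|\psi\|_{1,1+\ep}$ and $\|u\|_{1,1+\ep}$ except $|u'|\,|\psi|$, which we handle by $\big\||u'|\,|\psi|\big\|_{1+\ep}\le\|\psi\|_{C^0}\,\|u'\|_{1+\ep}$ together with the Sobolev embedding $W^{1,1+\ep}(S^1)\hookrightarrow C^0(S^1)$.

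The one point requiring care — and the only thing resembling an obstacle — is that the constant in this Sobolev embedding must be chosen independently of $\ep\in(0,1/2)$ so that $A_1$ ends up universal. This follows from the elementary circle estimate $\|\psi\|_{C^0(S^1)}\le C\big(\|\psi\|_{L^1(S^1)}+\|\psi'\|_{L^1(S^1)}\big)$ combined with H\"older's inequality $\|\cdot\|_{L^1(S^1)}\le(2\pi)^{\ep/(1+\ep)}\|\cdot\|_{L^{1+\ep}(S^1)}\le 2\pi\|\cdot\|_{L^{1+\ep}(S^1)}$, giving $\|\psi\|_{C^0}\le C\|\psi\|_{1,1+\ep}$ with $C$ independent of $\ep$. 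Hence $\big\||u'|\,|\psi|\big\|_{1+\ep}\le C\|\psi\|_{1,1+\ep}\|u\|_{1,1+\ep}$, so $\|P_u\psi\|_{1,1+\ep}\le C(1+\|u\|_{1,1+\ep})$, and taking the supremum over $\psi$ yields the claim.
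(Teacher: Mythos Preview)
Your proof is correct and follows essentially the same route as the paper's: both inequalities are handled identically, and the second one comes down to bounding $\|P_u\psi\|_{1,1+\ep}$ in terms of $(1+\|u\|_{1,1+\ep})\|\psi\|_{1,1+\ep}$. The paper packages this last estimate as the Banach-algebra property $\|fh\|_{1,1+\ep}\le C\|f\|_{1,1+\ep}\|h\|_{1,1+\ep}$ applied to $f=P_u$ and $h=\psi$, whereas you unpack it directly via the pointwise product rule and the Sobolev embedding; your added care in showing the embedding constant is uniform in $\ep\in(0,1/2)$ is a point the paper leaves implicit.
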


The proposition below gives an alternative expression for $G_{\kappa, \ep}(u)$ which will be useful later, particularly in the next section.
\begin{prop}
\begin{equation}\label{eq:first-var-proj}
G_{\kappa, \ep}(u)(\psi) = \int_{S^1} (1 + \ep) \frac{ u' \cdot \psi' }{(\ep^2 + |u'|^2)^{\frac{1 - \ep}{2}}} + (1 + \ep) \frac{A_u(u', u')}{(\ep^2 + |u'|^2)^{\frac{1 - \ep}{2}}} \cdot \psi + \kappa \psi \cdot Q_u(u').
\end{equation}
\end{prop}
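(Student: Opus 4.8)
The plan is to substitute $P_u\psi$ into the first variation formula~\eqref{eq:first-var} and then simplify the resulting integrand using pointwise linear algebra. First I would check that the substitution is legitimate. Since $P$ is smooth and $W^{1, 1 + \ep}(S^1) \hookrightarrow C^0(S^1)$, the composition $P_u = P \circ u$ lies in $W^{1, 1 + \ep}(S^1; \RR^{N \times N})$, so for $\psi \in W^{1, 1 + \ep}(S^1; \RR^N)$ the product $P_u\psi$ belongs to $W^{1, 1 + \ep}(S^1; \RR^N)$ with
\[
(P_u\psi)' = P_u\psi' + \big((dP)_u(u')\big)\psi \quad \text{a.e.};
\]
moreover $P_u\psi$ takes values in $TS^2$ along $u$, hence lies in $\cT_u$. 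By definition $G_{\kappa, \ep}(u)(\psi) = \delta L_{\kappa, \ep}(u)(P_u\psi)$, so~\eqref{eq:first-var} applies and yields
\[
G_{\kappa, \ep}(u)(\psi) = \int_{S^1} (1 + \ep) \frac{u' \cdot (P_u\psi)'}{(\ep^2 + |u'|^2)^{\frac{1 - \ep}{2}}} + \kappa\, (P_u\psi) \cdot Q_u(u')\, d\theta.
\]

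Next I would simplify each integrand. For the last term, since $Q_u(u') \in T_uS^2$ and $P_u$ is the self-adjoint orthogonal projection onto $T_uS^2$, we get $(P_u\psi) \cdot Q_u(u') = \psi \cdot P_u Q_u(u') = \psi \cdot Q_u(u')$. For the first term, use the product rule above and note that, since $u'(\theta) \in T_{u(\theta)}S^2$ for a.e.\ $\theta$ and $P_u$ is self-adjoint, $u' \cdot P_u\psi' = (P_u u') \cdot \psi' = u' \cdot \psi'$. Thus it only remains to identify the cross term $u' \cdot \big((dP)_u(u')\big)\psi$ with $A_u(u', u') \cdot \psi$.

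The one substantive ingredient is the pointwise identity
\[
\big((dP)_p(X)\big)X = A_p(X, X) \qquad \text{for all } p \in S^2,\ X \in T_pS^2.
\]
I would prove this by picking a smooth curve $\gamma$ in $S^2$ with $\gamma(0) = p$ and $\gamma'(0) = X$ and differentiating the relation $P_{\gamma(s)}\gamma'(s) = \gamma'(s)$ at $s = 0$, which gives $\big((dP)_p(X)\big)X = \gamma''(0) - P_p\gamma''(0) = \big(\gamma''(0)\big)^{\perp} = A_p(X, X)$, the last step being the Gauss formula. Both sides are smooth functions on $TS^2$, quadratic along the fibres, so the identity holds for every $(p, X)$, and in particular at $(u(\theta), u'(\theta))$ for a.e.\ $\theta$. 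Since each $P_y$, and hence each matrix $(dP)_p(X)$, is symmetric, we may then write $u' \cdot \big((dP)_u(u')\big)\psi = \big(((dP)_u(u'))u'\big) \cdot \psi = A_u(u', u') \cdot \psi$. Substituting this back gives precisely~\eqref{eq:first-var-proj}.

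I do not anticipate a genuine obstacle: everything is a routine computation once the $dP$--$A$ identity is recorded. The only points needing attention are bookkeeping ones — verifying that $P_u\psi \in W^{1, 1 + \ep}(S^1;\RR^N)$ so that~\eqref{eq:first-var} and the product rule may be applied, and observing that the low regularity of $u$ causes no difficulty because the identity relating $dP$ to the second fundamental form involves only $u$ and $u'$, never $u''$.
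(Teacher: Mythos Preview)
Your argument is correct, and it takes a somewhat different route from the paper's proof. The paper first assumes $u$ is smooth, integrates by parts in~\eqref{eq:first-var} to write $G_{\kappa,\ep}(u)(\psi)$ as an integral against $\psi$ alone, then uses that the normal component of $u''$ equals $A_u(u',u')$ to split off the second fundamental form term, integrates by parts back, and finally passes to general $u\in W^{1,1+\ep}$ by approximation. You instead differentiate the projection directly via the product rule $(P_u\psi)' = P_u\psi' + \big((dP)_u(u')\big)\psi$ and identify the cross term pointwise through the identity $\big((dP)_p(X)\big)X = A_p(X,X)$, which you derive by differentiating $P_\gamma\gamma' = \gamma'$. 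The two approaches rest on the same geometric fact (the relation between the derivative of the tangential projection and the second fundamental form), but your version avoids both the integration by parts and the smooth approximation step, working entirely at the $W^{1,1+\ep}$ level; the paper's version has the mild advantage of exhibiting the Euler--Lagrange operator in divergence form along the way.
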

\begin{proof}
Assuming for the moment that $u \in C^{\infty}(S^1; S^2)$, then we have by~\eqref{eq:first-var} that 
\begin{equation}\label{eq:first-var-smooth}
G_{\kappa, \ep}(u)(\psi) = \int_{S^1} \Big[-(1 + \ep) P_u\Big(\big( (\ep^2 + |u'|^2)^{\frac{\ep - 1}{2}}u'\big)'\Big) + \kappa Q_u(u')\Big] \cdot \psi d\theta.
\end{equation}
Now note that of course $u' \in T_u S^2$, while the orthogonal projection of $u''$ onto $(T_uS^2)^{\perp}$ is exactly $A_u(u', u')$. Hence
\[
P_u\Big(\big( (\ep^2 + |u'|^2)^{\frac{\ep - 1}{2}}u'\big)'\Big) = \big( (\ep^2 + |u'|^2)^{\frac{\ep - 1}{2}}u'\big)'  - (\ep^2 + |u'|^2)^{\frac{\ep - 1}{2}} A_u(u', u').
\]
Substituting this into~\eqref{eq:first-var-smooth} and integrating by parts give~\eqref{eq:first-var-proj} when $u \in C^{\infty}(S^1; S^2)$. The case $u \in W^{1, 1 + \ep}(S^1; S^2)$ follows by approximation.
\end{proof}
\subsection{Regularity of critical points and the Palais-Smale condition}
We first establish the regularity of critical points of the perturbed functional.
\begin{prop}\label{prop:smooth}
Let $u \in W^{1, 1 + \ep}(S^1; S^2)$ be a critical point of $L_{\kappa, \ep}$. Then $u$ is smooth with estimates, and moreover $|u'|$ is constant.
\end{prop}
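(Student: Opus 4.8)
The plan is to run a standard bootstrap argument starting from the weak Euler--Lagrange equation. First I would use the first variation formula~\eqref{eq:first-var} applied to test functions $\psi \in \cT_u$ to write the critical point equation in weak form:
\[
\int_{S^1} (1 + \ep)\frac{u' \cdot \psi'}{(\ep^2 + |u'|^2)^{\frac{1 - \ep}{2}}}\, d\theta = -\kappa \int_{S^1} \psi \cdot Q_u(u')\, d\theta
\]
for all $\psi \in \cT_u$. To free up the test function, I would use $G_{\kappa, \ep}(u) = \delta L_{\kappa, \ep}(u)\circ P_u$ and the expression~\eqref{eq:first-var-proj}: since $u$ is critical, $G_{\kappa, \ep}(u) = 0$, so for \emph{all} $\psi \in W^{1, 1+\ep}(S^1; \RR^N)$,
\[
\int_{S^1} (1+\ep)\frac{u' \cdot \psi'}{(\ep^2 + |u'|^2)^{\frac{1-\ep}{2}}}\, d\theta = -\int_{S^1}\Big[(1+\ep)\frac{A_u(u',u')}{(\ep^2+|u'|^2)^{\frac{1-\ep}{2}}} + \kappa Q_u(u')\Big]\cdot \psi\, d\theta.
\]
The right-hand side is a bounded linear functional of $\psi$ in $L^{\infty}$ (hence in any $L^p$), because $u \in W^{1,1+\ep} \hookrightarrow C^0$ forces $u$ to map into a compact set, $A$ and $Q$ are smooth, and $u' \in L^{1+\ep}$ makes $A_u(u',u')/(\ep^2 + |u'|^2)^{(1-\ep)/2}$ lie in $L^{1+\ep}$ (the numerator grows like $|u'|^2$, the denominator like $|u'|^{1-\ep}$, net growth $|u'|^{1+\ep}$). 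So the vector field $w := (1+\ep)(\ep^2+|u'|^2)^{\frac{\ep-1}{2}}u'$ satisfies $w' \in L^{1+\ep}$ in the distributional sense, i.e. $w \in W^{1,1+\ep}(S^1;\RR^N)$, in particular $w$ is continuous.

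Next I would invert the pointwise relation $w = \Phi(u')$ where $\Phi(y) := (1+\ep)(\ep^2+|y|^2)^{\frac{\ep-1}{2}}y$. This map is a smooth diffeomorphism of $\RR^N$ onto its image (an open ball, since $|\Phi(y)| = (1+\ep)|y|(\ep^2+|y|^2)^{(\ep-1)/2}$ is strictly increasing in $|y|$ from $0$ to $(1+\ep)$ — one checks the radial derivative is positive — and $\Phi$ is radial, hence injective with smooth inverse by the inverse function theorem, with $d\Phi$ invertible everywhere as in~\eqref{eq:F-estimate1}). Therefore $u' = \Phi^{-1}(w)$ is continuous, so $u \in C^1$. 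Now bootstrap: with $u \in C^1$, the right-hand side of the weak equation is continuous, so $w \in C^1$, hence $u' = \Phi^{-1}(w) \in C^1$, i.e. $u \in C^2$; then $A_u(u',u')$ and $Q_u(u')$ are $C^1$, so $w \in C^2$, $u \in C^3$, and so on, giving $u \in C^\infty$. The ``estimates'' clause is obtained by keeping track of the constants: each step's bound depends only on $\kappa$, $\ep$, $\|u'\|_{1+\ep}$ (hence on $L_\ep(u)$ via Lemma~\ref{lemm:basic-estimates}(a)) and the fixed geometric data $A$, $Q$, $\|\Phi^{-1}\|_{C^k}$ on the relevant ball.

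Finally, for the constant-speed claim, I would test the \emph{tangential} first variation~\eqref{eq:first-var} with the reparametrization field $\psi = \varphi u'$ for $\varphi \in C^\infty(S^1)$ (note $\psi \in \cT_u$ once $u$ is known to be $C^1$, since $u' \perp u$ pointwise... more precisely $u' \in T_uS^2$). Plugging in and integrating by parts, the curvature term contributes $\kappa\int_{S^1}\varphi\, u'\cdot Q_u(u')\, d\theta = 0$ because $Q_u$ is a rotation by $\pi/2$ on $T_uS^2$ so $u'\cdot Q_u(u') = 0$; the length term gives, after a short computation, a multiple of $\int_{S^1} \varphi' \, (\ep^2+|u'|^2)^{\frac{\ep-1}{2}}|u'|^2\, d\theta$ — equivalently, one finds that the function $(\ep^2+|u'|^2)^{\frac{\ep-1}{2}}|u'|^2 - (\ep^2+|u'|^2)^{\frac{1+\ep}{2}}$ has vanishing derivative, so $(\ep^2+|u'|^2)^{\frac{\ep-1}{2}}$ — and hence $|u'|$ — is constant. (Alternatively, and perhaps cleaner: since $u$ is now smooth, the equation $w' = (1+\ep)(\ep^2+|u'|^2)^{\frac{\ep-1}{2}}A_u(u',u') + \kappa Q_u(u')$ holds classically; dot with $u'$, use $A_u(u',u')\perp u'$ and $Q_u(u')\perp u'$ to get $w'\cdot u' = 0$, then observe $w'\cdot u' = \frac{d}{d\theta}\big(w\cdot u'\big) - w\cdot u'' = \frac{d}{d\theta}(w\cdot u') - (1+\ep)(\ep^2+|u'|^2)^{\frac{\ep-1}{2}}u'\cdot u''$; combine with $\frac{d}{d\theta}(w\cdot u' - F(u')) $ where $F(y) = (\ep^2+|y|^2)^{\frac{1+\ep}{2}}$ to deduce $|u'|$ constant.) The main obstacle is purely bookkeeping: verifying that $\Phi$ is a diffeomorphism onto a ball and that $\Phi^{-1}$ has the regularity needed at each bootstrap stage, together with making the constants in the estimates explicit; there is no conceptual difficulty once the equation is put in the divergence form above.
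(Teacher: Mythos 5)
Your argument is correct and follows essentially the same route as the paper: pass to the projected first variation via $G_{\kappa,\ep}(u)=0$, read the equation as a divergence-form ODE for the auxiliary field $w=(1+\ep)(\ep^2+|u'|^2)^{\frac{\ep-1}{2}}u'$ (the paper writes $h=w/(1+\ep)$), show $w$ is Sobolev hence continuous, invert the pointwise relation between $u'$ and $w$, bootstrap, and then deduce constant speed from orthogonality of $w'$ to $u'$. Your ``alternative'' constant-speed argument is literally the paper's: from $h'=w$ and $w\perp h$ one gets $\tfrac{d}{d\theta}|h|^2=2h\cdot h'=0$, so $|h|$ and hence $|u'|$ is constant. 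The paper's inversion is formulated scalar-wise --- since $u'$ and $h$ are parallel, it inverts $|h|^2=\tau(|u'|^2)$ with $\tau(t)=(\ep^2+t)^{\ep-1}t$ and then writes $u'=(\ep^2+\tau^{-1}(|h|^2))^{\frac{1-\ep}{2}}h$; your full vector inversion $u'=\Phi^{-1}(w)$ is equivalent but requires verifying $\Phi$ is a global diffeomorphism, which the scalar formulation sidesteps.

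Two small slips worth noting. First, for $\ep>0$ the image of $\Phi$ is all of $\RR^N$, not an open ball: $|\Phi(y)|=(1+\ep)|y|(\ep^2+|y|^2)^{\frac{\ep-1}{2}}\sim(1+\ep)|y|^\ep\to\infty$ as $|y|\to\infty$ (your bound $1+\ep$ would be the $\ep=0$ limit of the length functional). This is harmless --- $\Phi$ is still a radial diffeomorphism of $\RR^N$ onto $\RR^N$. Second, you claim $w'\in L^{1+\ep}$, but the dominant term $\frac{A_u(u',u')}{(\ep^2+|u'|^2)^{(1-\ep)/2}}$ is only controlled by $(\ep^2+|u'|^2)^{\frac{1+\ep}{2}}$, which is in $L^1$ (that is what finiteness of $L_\ep(u)$ gives you), not in $L^{1+\ep}$ without additional integrability of $u'$. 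This does not damage the argument: $w\in W^{1,1}(S^1)\hookrightarrow C^0(S^1)$ suffices to start the bootstrap. Everything else, including the bootstrap steps and the reparametrization-field argument you gave as your primary constant-speed proof (which is the Noether conserved quantity for the $L_\ep$ Lagrangian), is sound.
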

\begin{proof}
Introducing
\[
h = \frac{u'}{(\ep^2 + |u'|^2)^{\frac{1 - \ep}{2}}},
\]
we see that $|h|^2 = (\ep^2 + |u'|^2)^{\ep - 1}|u'|^2$, and hence $|u'|^2 = \tau^{-1}(|h|^2)$, where $\tau$ denotes the (strictly increasing) function $t \mapsto (\ep^2 + t)^{\ep- 1}t$ for $t > -\ep^2$. (Note: $\tau'(t) = (\ep^2 + t)^{\ep - 2}(\ep^2 + \ep t)$.) Consequently 
\begin{equation}\label{eq:u-h-relation}
u' = \big (\ep^2 + \tau^{-1}(|h|^2)\big)^{\frac{1 - \ep}{2}}h.
\end{equation}
From this it follows that $u'$ has the same regularity as $h$, since $\tau^{-1}$ is smooth. Now, the fact that $u$ is a critical point means that 
\begin{equation}\label{eq:Euler-Lagrange}
\int_{S^1} (1 + \ep) \frac{ u' \cdot \psi' }{(\ep^2 + |u'|^2)^{\frac{1 - \ep}{2}}} + (1 + \ep) \frac{A_u(u', u')}{(\ep^2 + |u'|^2)^{\frac{1 - \ep}{2}}} \cdot \psi + \kappa \psi \cdot Q_u(u') = 0,
\end{equation}
for all $\psi \in W^{1, 1 + \ep}(S^1; \RR^N)$. Rearranging, we see from the above that 
\begin{equation}\label{eq:h-ODE}
\int_{S^1}  \langle h, \psi' \rangle = -\int_{S^1} w \cdot \psi,
\end{equation}
where
\[
w :=  \frac{A_u(u', u')}{(\ep^2 + |u'|^2)^{\frac{1 - \ep}{2}}} + (1 + \ep)^{-1} \kappa \cdot Q_u(u').
\]
The smoothness of $h$ follows inductively from~\eqref{eq:h-ODE} and~\eqref{eq:u-h-relation}. To see that $u$ has constant speed, we note from~\eqref{eq:h-ODE} that 
\[
h' = w \perp h,
\]
where the orthogonality follows since $h$ is a multiple of $u'$. Consequently $|h|^2$ is constant, and hence, by~\eqref{eq:u-h-relation}, we see that $|u'|^2$ is constant.
\end{proof}

\begin{prop} \label{prop:PS}Let $u_j$ be a sequence in $W^{1, 1 + \ep}(S^1; S^2)$ satisfying
\begin{enumerate}
\item[(i)] $L_{\ep}(u_j) \leq C$ for some $C$ independent of $j$.
\item[(ii)] $\lim_{j \to \infty}\|\delta L_{\kappa, \ep}(u_j)\| = 0$.
\end{enumerate}
Then a subsequence of $u_j$, which we do not relabel, converges strongly in $W^{1, 1 + \ep}$ to a critical point $u$ of $L_{\kappa, \ep}$ with $L_{\ep}(u) \leq C$.
\end{prop}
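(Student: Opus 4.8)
The plan is to follow the standard Palais--Smale recipe adapted to this Finsler setting, with the extra care that the functional $L_{\kappa,\ep}$ is only locally reduced. First I would extract a weak limit. From hypothesis (i) and Lemma~\ref{lemm:basic-estimates}(a) we get a uniform bound on $L(u_j) = \|u_j'\|_1$, but more usefully the definition of $L_\ep$ gives directly $\int_{S^1}|u_j'|^{1+\ep} \leq C'$ for $C'$ depending on $C$ and $\ep$; since $u_j$ maps into the compact set $S^2$, $\|u_j\|_{1,1+\ep}$ is bounded. As $W^{1,1+\ep}(S^1;\RR^N)$ is reflexive (recall $\ep>0$), a subsequence converges weakly in $W^{1,1+\ep}$ and, by Rellich on $S^1$, strongly in $C^0$ to some $u$; the constraint $u_j(\theta)\in S^2$ passes to the limit, so $u\in W^{1,1+\ep}(S^1;S^2)$, and weak lower semicontinuity of the convex integrand $F(u') = (\ep^2+|u'|^2)^{(1+\ep)/2}$ gives $L_\ep(u)\leq C$.

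Next I would upgrade to strong convergence, which is the crux. Since $\|u_j-u\|_{C^0}\to 0$, eventually all $u_j$ lie in one chart neighborhood of $u$ on which a local reduction $L^{\cA}_{\kappa,\ep}$ is defined, and by Lemma~\ref{lemm:G-dL-relation} together with the uniform $W^{1,1+\ep}$-bound, hypothesis (ii) gives $\|G_{\kappa,\ep}(u_j)\|\to 0$. Using the projected first variation formula~\eqref{eq:first-var-proj}, I would test with $\psi = P_{u_j}(u_j - u)$ (or more precisely an admissible tangent-valued test function close to $u_j'-u'$; the $C^0$-closeness and smoothness of $P$ make the error terms lower order), and separately test $G_{\kappa,\ep}(u)$ with the same $\psi$. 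Subtracting, the dominant term is
\[
\int_{S^1} (1+\ep)\Big( (dF)_{u_j'} - (dF)_{u'}\Big)\cdot (u_j' - u')\, d\theta,
\]
where I am writing $(dF)_y = (1+\ep) y (\ep^2+|y|^2)^{-(1-\ep)/2}$ as in the excerpt. The remaining terms — the $A_{u_j}(u_j',u_j')$ term, the $\kappa Q_{u_j}(u_j')$ term, the discrepancy between $P_{u_j}$ and $P_u$, and the two dual pairings $\langle G_{\kappa,\ep}(u_j),\psi\rangle$, $\langle G_{\kappa,\ep}(u),\psi\rangle$ — are all controlled: the dual pairings tend to zero because $\|G_{\kappa,\ep}(u_j)\|\to 0$ (using that $G_{\kappa,\ep}(u)=0$ will follow, or can be bootstrapped) and $\|\psi\|_{1,1+\ep}$ is bounded, while the $A$ and $Q$ contributions are handled by $C^0$-convergence of $u_j\to u$ and weak $L^{1+\ep}$-convergence of $u_j'$. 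The monotonicity inequality~\eqref{eq:F-estimate1} of Lemma~\ref{lemm:coercive} then gives
\[
c_\ep \int_{S^1} \frac{|u_j' - u'|^2}{(\ep^2 + |u_j'|^2 + |u'|^2)^{(1-\ep)/2}}\, d\theta \longrightarrow 0.
\]
From this I would conclude $u_j'\to u'$ in $L^{1+\ep}$ by a Hölder argument: write $|u_j'-u'|^{1+\ep}$ as the product of $|u_j'-u'|^{1+\ep}(\ep^2+|u_j'|^2+|u'|^2)^{-(1-\ep)(1+\ep)/4}$ and $(\ep^2+|u_j'|^2+|u'|^2)^{(1-\ep)(1+\ep)/4}$, apply Hölder with exponents $2/(1+\ep)$ and $2/(1-\ep)$; the first factor integrates to something bounded by the quantity above raised to a positive power (times a bounded factor from $\int(\ep^2+|u_j'|^2+|u'|^2)^{(1-\ep)/2}$-type terms, which is controlled by the uniform $L^{1+\ep}$-bound on the derivatives), and the second factor's integral is bounded by the uniform $W^{1,1+\ep}$-bound. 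Hence $\|u_j'-u'\|_{1+\ep}\to 0$, and combined with $C^0$-convergence, $u_j\to u$ strongly in $W^{1,1+\ep}$.

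Finally, strong convergence together with the continuity of $\delta L_{\kappa,\ep}$ — which follows from $L^{\cA}_{\kappa,\ep}$ being $C^1$ (Lemma~\ref{lemm:local-reduct}(b)) and the chart structure — forces $\|\delta L_{\kappa,\ep}(u)\| = \lim_j \|\delta L_{\kappa,\ep}(u_j)\| = 0$, so $u$ is a critical point of $L_{\kappa,\ep}$, and $L_\ep(u)\leq C$ was already established (in fact $L_\ep(u) = \lim_j L_\ep(u_j)$ once we have strong convergence, so the bound is sharp). The main obstacle is the strong-convergence step: one must carefully choose the test function so that it is genuinely admissible (tangent-valued, bounded in $W^{1,1+\ep}$) while still extracting the monotone quantity from~\eqref{eq:F-estimate1}, and then run the degenerate-Hölder argument to convert the weighted $L^2$-smallness into $L^{1+\ep}$-smallness of the derivatives — this is where the precise exponents and the estimates of Lemma~\ref{lemm:coercive} are essential.
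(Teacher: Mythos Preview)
Your overall strategy --- extract a weak/$C^0$ limit, use the monotonicity estimate~\eqref{eq:F-estimate1} on the difference of first variations, then a H\"older interpolation to upgrade to strong $L^{1+\ep}$-convergence of the derivatives --- is exactly the paper's, and your H\"older step matches the paper's computation line for line. There is, however, one genuine gap and one instructive difference.

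The gap is in how you dispose of $G_{\kappa,\ep}(u)(\psi)$. You write that this term is handled ``using that $G_{\kappa,\ep}(u)=0$ will follow, or can be bootstrapped,'' but that is circular: you do not yet know $u$ is critical, and any bootstrap would require the very strong convergence you are trying to establish. The correct justification is simply that $G_{\kappa,\ep}(u)$ is a \emph{fixed} bounded linear functional on $W^{1,1+\ep}(S^1;\RR^N)$ and $u_j - u \rightharpoonup 0$ weakly there, so $G_{\kappa,\ep}(u)(u_j - u)\to 0$ automatically. (Incidentally, since $G_{\kappa,\ep}$ already accepts arbitrary $\RR^N$-valued test functions via~\eqref{eq:first-var-proj}, there is no need to project; testing with $\psi = u_j - u$ directly produces the principal term $\int((dF)_{u_j'}-(dF)_{u'})\cdot(u_j'-u')$ with no projection errors.)

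The paper sidesteps this issue altogether by running a Cauchy-sequence argument instead: it tests $G_{\kappa,\ep}(u_j)-G_{\kappa,\ep}(u_k)$ against $u_j-u_k$, so that \emph{both} $G$-terms have vanishing norm by hypothesis~(ii) and no information about the limit $u$ is needed in advance. Once your weak-convergence fix is in place the two routes are equivalent, but the paper's version is cleaner precisely because it never touches $G_{\kappa,\ep}(u)$ until strong convergence is already in hand.
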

\begin{proof}
To begin, we note that assumption (ii) implies that $\lim_{j \to \infty}\|G_{\kappa, \ep}(u_j)\| = 0$ by Lemma~\ref{lemm:G-dL-relation}.  Next, assumption (i) implies that the sequence $u_j$ is bounded in $W^{1, 1 + \ep}$ and hence, passing to a subsequence if necessary, we may assume that $u_j $ converges weakly in $W^{1, 1 + \ep}$ and strongly in $C^{0}$ to some limit map $u \in W^{1, 1 +\ep}(S^1; S^2)$. Next, for $j, k$ large we write
\[
R_{jk} = G_{\kappa, \ep}(u_j)(u_j - u_k) - G_{\kappa, \ep}(u_k)(u_j - u_k).
\]
Then by assumption (ii) and the boundedness of $(u_j)$ in $W^{1, 1 + \ep}$ we see that $|R_{jk}| \to 0$ as $j, k \to \infty$. On the other hand, using~\eqref{eq:first-var-proj} we find that 
\begin{align*}
& \int_{S^1} ((dF)_{u_j'} - (dF)_{u_k'}) \cdot (u_j' - u_{k}') \\
&  =\  R_{jk} - (1 + \ep)\int_{S^1}\big( \frac{A_{u_j}(u_j', u_j')}{(\ep^2 + |u_j'|^2)^{\frac{1 - \ep}{2}}}  - \frac{A_{u_k}(u_k', u_k')}{(\ep^2 + |u_k'|^2)^{\frac{1 - \ep}{2}}}\big) \cdot (u_j - u_k)\\
& - \kappa \int_{S^1} \big(Q_{u_j}(u_j') - Q_{u_k}(u_k')\big) \cdot (u_j - u_k).
\end{align*}
Hence by the convergence properties of $(u_j)$ arranged above, together with~\eqref{eq:F-estimate1}, we see that 
\begin{equation}\label{eq:coercive}
c_{\ep}\int_{S^1}\frac{|u_j' - u_k'|^2}{(\ep^2 + |u_j'|^2+ |u_k'|^2)^{\frac{1 - \ep}{2}}} = o_{j, k}(1),
\end{equation}
where $o_{j, k}(1)$ denotes quantities that tend to zero as $j,k \to \infty$. To continue, we note by H\"older's inequality that 
\begin{align*}
\int_{S^1}|u_j' - u_k'|^{1 + \ep} & = \int_{S^1} |u_j' - u_k'|^{1 + \ep} (\ep^2 + |u_j'|^2 + |u_k'|^2)^{\frac{\ep^2 - 1}{4}}(\ep^2 + |u_j'|^2 + |u_k'|^2)^{\frac{1 - \ep^2}{4}} \\
&\leq \left(\int_{S^1}|u_j' - u_k'|^{2}(\ep^2 + |u_j'|^2 + |u_k'|^2)^{\frac{\ep - 1}{2}} \right)^{\frac{1 + \ep}{2}} \left( \int_{S^1}(\ep^2 + |u_j'|^2 + |u_k'|^2)^{\frac{1 + \ep}{2}} \right)^{\frac{1 - \ep}{2}}.
\end{align*}
This implies by~\eqref{eq:coercive} and the $W^{1, 1 + \ep}$-boundedness of the sequence $u_j$ that $\lim_{j, k \to \infty}\|u_j '- u_k'\|_{1 + \ep} = 0$. In other words, the convergence of $u_j$ to $u$ is strong in $W^{1, 1 + \ep}$. This proves the first conclusion of the Proposition. The second conclusion is obvious.
\end{proof}

%#################################################
\section{Existence of non-trivial critical points}\label{S:existence of critical points}
To any continuous path $\gamma \in C^{0}([0, 1]; W^{1, 1  + \ep})(S^1; S^2)$ starting and ending at constant maps, the map $(t, \theta) \mapsto \gamma(t)(\theta)$ is continuous by Sobolev embedding, and induces a continuous map $h_{\gamma}: S^2 \to S^2$.  We then define the class of admissible sweepouts to be
\[
\cP_{\ep} = \{\gamma \in C^{0}([0,1]; W^{1, 1 + \ep}(S^1; S^2))\ |\ \gamma(0), \gamma(1) = \text{constant},\ \deg(h_{\gamma})  = 1\}.
\]
Given $\gamma \in \cP_{\ep}$ and $t \in [0, 1]$, we define $f_{\gamma, t} \in \cE(\gamma(t))$ by letting $f_{\gamma, t}(s, \theta) = \gamma(st)(\theta)$. For $\kappa, \ep > 0$, the min-max value is defined by
\[
\omega_{\kappa, \ep} = \inf_{\gamma \in \cP_{\ep}}\sup_{t \in [0, 1]} L_{\kappa, \ep}(\gamma(t), f_{\gamma, t}).
\]
We summarize some basic properties of admissible sweepouts and the min-max values below.
\begin{lemm}\label{lemm:min-max-properties}
For $\kappa > 0, \ep > 0$, the following hold.
\begin{enumerate}
\item[(a)] The collection $\cP_{\ep}$ is non-empty, and for each $\gamma \in \cP_\ep$, the function $t \mapsto L_{\kappa, \ep}(\gamma(t), f_{\gamma, t})$ is continuous.
\vskip 1mm
\item[(b)] $0 \leq \omega_{\kappa, \ep} < \infty$.
\end{enumerate}
\end{lemm}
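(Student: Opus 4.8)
The plan is to handle the two items in turn; essentially everything is bookkeeping except the continuity of the enclosed-area term $t\mapsto A(f_{\gamma,t})$, which is the single point that genuinely requires an argument. For the non-emptiness of $\cP_\ep$, I would exhibit an explicit sweepout: fix a diffeomorphism $\phi$ from the round $2$-sphere onto $(S^2,g)$ (any smooth $2$-sphere is diffeomorphic to the round one) and let $\gamma(t)$ be the image under $\phi$ of the latitude circle at height $2t-1$, parametrized by $\theta\in S^1$ in the standard way. Each $\gamma(t)$ is smooth, hence lies in $W^{1,1+\ep}(S^1;S^2)$; the latitude circles shrink to the poles as $t\to 0,1$, so $\gamma(0)$ and $\gamma(1)$ are constant and $\|\gamma(t)'\|_{1+\ep}\to 0$ there, which makes $\gamma$ continuous as a map into $W^{1,1+\ep}(S^1;S^2)$; and the induced map $h_\gamma\colon S^2\to S^2$ is $\phi$ composed with the homeomorphism that the latitude sweepout induces after collapsing $\{0\}\times S^1$ and $\{1\}\times S^1$, hence has degree $\pm1$. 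Precomposing $\gamma$ with $\theta\mapsto-\theta$ if necessary yields $\deg(h_\gamma)=1$, so $\gamma\in\cP_\ep$.

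For the continuity assertion in (a), split $L_{\kappa,\ep}(\gamma(t),f_{\gamma,t})=L_\ep(\gamma(t))+\kappa A(f_{\gamma,t})$. Continuity of $t\mapsto L_\ep(\gamma(t))$ is immediate from Lemma~\ref{lemm:basic-estimates}(b) together with the fact that continuity of $\gamma$ into $W^{1,1+\ep}$ keeps $\|\gamma(t)'\|_{1+\ep}$, and hence $L(\gamma(t))$, locally bounded. The substantive point is continuity of $t\mapsto A(f_{\gamma,t})$: here $f_{\gamma,t}$ is only continuous (it need not belong to $\widetilde{\cE}(\gamma(t))$), so $A(f_{\gamma,t})$ is only defined by approximation as in Remark~\ref{rmk:area-extend}; the boundary curve $\gamma(t)$ itself varies with $t$; and the areas of competing extensions of a fixed curve are pinned down only modulo $\Area_g(S^2)$, so one must exclude a jump by a nonzero integer multiple of $\Area_g(S^2)$.

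To do this I fix $t_0$ and restrict to $t$ close enough that, by uniform continuity of $\gamma$ as a map into $C^0$, the whole arc $\{\gamma(s):s\in[t_0,t]\}$ as well as the $\Pi$-projected linear homotopy $q\colon(s,\theta)\mapsto\Pi\big(s\,\gamma(t)(\theta)+(1-s)\gamma(t_0)(\theta)\big)$ take values within $C^0$-distance $\delta_0$ of $\gamma(t_0)$. Let $h\in\cE(\gamma(t))$ be the concatenation of $f_{\gamma,t_0}$ with $q$, as in Lemma~\ref{lemm:basic-estimates}(c). On one hand, that lemma gives $|A(f_{\gamma,t_0})-A(h)|\le A_0\|\gamma(t)-\gamma(t_0)\|_{C^0}\big(L(\gamma(t_0))+L(\gamma(t))\big)$, which tends to $0$ as $t\to t_0$ since $\gamma$ has locally bounded length. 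On the other hand, up to reparametrization of $[0,1]$ both $h$ and $f_{\gamma,t}$ arise from $f_{\gamma,t_0}$ by concatenation with a path from $\gamma(t_0)$ to $\gamma(t)$ in $C^0(S^1;S^2)$ — namely $q$ for $h$ and the arc $\gamma|_{[t_0,t]}$ for $f_{\gamma,t}$ — and the homotopy $(r,s,\theta)\mapsto\Pi\big((1-r)q(s,\theta)+r\,\gamma(t_0+s(t-t_0))(\theta)\big)$ shows these two paths are homotopic rel endpoints, since the displayed convex combinations stay within $\delta_0$ of $\gamma(t_0)$ and hence in $\cV$. Concatenating $f_{\gamma,t_0}$ with this homotopy produces a $C^0$-continuous family of extensions of $\gamma(t)$ joining $h$ to $f_{\gamma,t}$, so the $C^0$-version of Lemma~\ref{lemm:area-properties}(b) recorded in Remark~\ref{rmk:area-extend} gives $A(h)=A(f_{\gamma,t})$. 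Combining the two displays yields $|A(f_{\gamma,t})-A(f_{\gamma,t_0})|\to 0$, which proves (a).

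Part (b) is then short. The lower bound holds for every $\gamma\in\cP_\ep$ because $\gamma(0)$ is a constant map, so $f_{\gamma,0}$ is a constant extension and $L_{\kappa,\ep}(\gamma(0),f_{\gamma,0})=L_\ep(\gamma(0))+\kappa A(f_{\gamma,0})=0$; hence $\sup_{t\in[0,1]}L_{\kappa,\ep}(\gamma(t),f_{\gamma,t})\ge 0$, and therefore $\omega_{\kappa,\ep}\ge 0$. For the upper bound, $\cP_\ep\ne\emptyset$ by the first part, and for any fixed $\gamma\in\cP_\ep$ the function $t\mapsto L_{\kappa,\ep}(\gamma(t),f_{\gamma,t})$ is continuous on the compact interval $[0,1]$ by part (a), hence bounded, so $\omega_{\kappa,\ep}<\infty$. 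I expect the main obstacle to be precisely the step described in the previous paragraph: controlling the multivalued enclosed-area functional along a merely continuous family and ruling out a jump by an integer multiple of $\Area_g(S^2)$, which is what forces one to pass through the concatenated extension $h$, to invoke Lemma~\ref{lemm:basic-estimates}(c) rather than a naive estimate, and to use the small-neighbourhood homotopy argument.
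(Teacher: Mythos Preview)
Your proof is correct. The non-emptiness construction and part (b) match the paper's almost verbatim. For the continuity in (a), however, you take a genuinely different route: you bypass the local-reduction machinery of Section~\ref{subsec:local-reduct} entirely and argue directly with the area functional, using Lemma~\ref{lemm:basic-estimates}(c) to control $|A(f_{\gamma,t_0})-A(h)|$ and the $C^0$-homotopy invariance from Remark~\ref{rmk:area-extend} to identify $A(h)$ with $A(f_{\gamma,t})$. The paper instead observes that for $t$ near $t_0$ one has $L_{\kappa,\ep}(\gamma(t),f_{\gamma,t})=L^{\cA}_{\kappa,\ep}(\gamma(t))$ for the local reduction $L^{\cA}_{\kappa,\ep}$ induced by $(\gamma(t_0),f_{\gamma,t_0})$ on a simply-connected neighborhood $\cA$, and then simply quotes Lemma~\ref{lemm:local-reduct}(b), which says $L^{\cA}_{\kappa,\ep}$ is $C^1$. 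The paper's argument is shorter because it cashes in on machinery built for other purposes; your argument is more self-contained and makes the ``no jump by $\Area_g(S^2)$'' step explicit, which is pedagogically clearer. One small point: you should note (or it is implicit in your reparametrization remark) that the concatenation of $f_{\gamma,t_0}$ with the arc $\gamma|_{[t_0,t]}$ agrees with $f_{\gamma,t}$ only up to a reparametrization of $[0,1]$, and that this is harmless by another application of the $C^0$-version of Lemma~\ref{lemm:area-properties}(b).
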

\begin{proof}
For the first assertion of (a), we obtain an element of $\cP_\ep$ by parametrizing appropriately the circles $\{(x_1, x_2, 2t-1)\ |\ x_1^2 + x_2^2 = 1 - (2t-1)^2\}$ for $t \in [0, 1]$ coming from the standard embedding of $S^2$ into $\RR^3$. Next, note that for $\gamma \in \cP_\ep$ and $t_0 \in [0, 1]$, letting $\cA$ be a simply-connected neighborhood of $\gamma(t_0)$ and $L^{\cA}_{\kappa, \ep}$ the local reduction induced by $(\gamma(t_0), f_{\gamma, t_0})$, then for $t$ sufficiently close to $t_0$ we have 
\[
L^{\cA}_{\kappa, \ep}(\gamma(t)) = L_{\kappa, \ep}(\gamma(t), f_{\gamma, t}).
\]
The second assertion of (a) then follows from Lemma~\ref{lemm:local-reduct}(b). For part (b), the finiteness of $\omega_{\kappa, \ep}$ follows easily from part (a). On the other hand, since $L_{\kappa, \ep}(\gamma(0), f_{\gamma, 0}) = 0$ for all $\gamma \in \cP_\ep$, we see that $\omega_{\kappa, \ep} \geq 0$.
\end{proof}

\subsection{Derivative estimates on the min-max value and uniform length bound}
\label{SS:derivative estimates}
\begin{prop}\label{prop:struwe-trick} 
\begin{enumerate}
\item[(a)]  Given $\ep > 0$, the function $\kappa \mapsto \omega_{\kappa, \ep}/\kappa$ is non-increasing.
\item[(b)] Given $\kappa > 0$, we have $\omega_{\kappa, \ep} \leq \omega_{\kappa, 1} + 2\pi$ for all $\ep \in (0, 1)$.
\item[(c)] Given a sequence $\ep_j \to 0$, for almost every $\kappa_0 > 0$, there exist a constant $c$, and a subsequence of $\ep_j$, which we do not relabel, such that 
\[
0 \leq \frac{d}{d\kappa}\Big|_{\kappa = \kappa_0}\left( -\frac{\omega_{\kappa, \ep_j}}{\kappa}\right) \leq c, \text{ for all }j.
\]
\end{enumerate}
\end{prop}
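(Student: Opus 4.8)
The plan is to establish parts (a) and (b) by direct comparison of sweepouts, and then part (c) by combining these monotonicity and boundedness facts with a standard measure-theoretic argument of Struwe's type. For part (a), fix $0 < \kappa_1 < \kappa_2$ and let $\gamma \in \cP_{\ep}$ be an arbitrary admissible sweepout. Using the local reductions to make sense of the area term along $\gamma$, one has $L_{\kappa,\ep}(\gamma(t), f_{\gamma,t}) = L_{\ep}(\gamma(t)) + \kappa A(f_{\gamma,t})$, and hence $\tfrac1\kappa L_{\kappa,\ep}(\gamma(t), f_{\gamma,t}) = \tfrac1\kappa L_{\ep}(\gamma(t)) + A(f_{\gamma,t})$. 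Since $L_{\ep} \geq 0$, the function $\kappa \mapsto \tfrac1\kappa L_{\ep}(\gamma(t)) + A(f_{\gamma,t})$ is non-increasing in $\kappa$ for each fixed $t$; taking the sup over $t$ preserves this, so $\kappa \mapsto \tfrac1\kappa \sup_t L_{\kappa,\ep}(\gamma(t), f_{\gamma,t})$ is non-increasing, and then taking the infimum over $\gamma \in \cP_{\ep}$ yields that $\kappa \mapsto \omega_{\kappa,\ep}/\kappa$ is non-increasing. For part (b), I would take a near-optimal sweepout $\gamma \in \cP_1$ for $\omega_{\kappa,1}$ and use the \emph{same} $\gamma$ as a competitor in $\cP_{\ep}$ (note $\cP_1 \subset \cP_{\ep}$ since $W^{1,2} \hookrightarrow W^{1,1+\ep}$ and the degree condition is unchanged); the area term $\kappa A(f_{\gamma,t})$ is identical, while Lemma~\ref{lemm:basic-estimates}(a)-type elementary estimates, or simply the pointwise inequality $(\ep^2 + s^2)^{(1+\ep)/2} - \ep^{1+\ep} \leq (1 + s^2)^{1/2} \cdot(\text{something}) $ compared against $(1+s^2)$, give $L_{\ep}(\gamma(t)) \leq L_1(\gamma(t)) + 2\pi$ for $\ep \in (0,1)$ (since $(\ep^2+s^2)^{(1+\ep)/2} \le 1 + s^2 = (1^2+s^2)^{(1+1)/2}$ when $\ep\le1$ and $\ep^2\le1$, after accounting for the subtracted constants with the $2\pi$ slack). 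Taking sup over $t$ and then infimum over such $\gamma$ gives $\omega_{\kappa,\ep} \leq \omega_{\kappa,1} + 2\pi$.

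For part (c), the key observation is that for each fixed $\ep_j$, the function $\phi_j(\kappa) := -\omega_{\kappa,\ep_j}/\kappa$ is non-decreasing in $\kappa$ by part (a), hence monotone, hence differentiable almost everywhere (Lebesgue). Moreover, by part (a) applied between $\kappa_0$ and $2\kappa_0$ together with part (b), one controls $\phi_j$ locally: $\phi_j$ is non-decreasing, $\phi_j(\kappa) \leq 0$ for all $\kappa$ (since $\omega_{\kappa,\ep_j}\geq 0$), and $\phi_j(2\kappa_0) - \phi_j(\kappa_0) \le -\phi_j(\kappa_0) = \omega_{\kappa_0,\ep_j}/\kappa_0 \le (\omega_{\kappa_0,1}+2\pi)/\kappa_0 =: M(\kappa_0)$, a bound \emph{independent of $j$}. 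Now I would invoke the standard fact (essentially the argument in Struwe~\cite{Struwe88}, cf.\ also the monotone rearrangement / Fatou-type lemma) that for a sequence of non-decreasing functions $\phi_j$ on an interval $[a,b]$ with total increments $\phi_j(b) - \phi_j(a)$ bounded by a constant $M$ independent of $j$, the set of $\kappa_0$ for which $\liminf_j \phi_j'(\kappa_0) = +\infty$ has measure zero; concretely, on the set where $\limsup_j \phi_j'(\kappa_0) \le 2M/(b-a)$ fails one uses that $\int_a^b \liminf_j \phi_j' \,d\kappa \le \liminf_j \int_a^b \phi_j' \,d\kappa \le M$ by Fatou, forcing $\liminf_j \phi_j'(\kappa_0) < \infty$ a.e. Hence for a.e.\ $\kappa_0 \in [a,b]$ there is a subsequence along which $\phi_j'(\kappa_0)$ stays bounded by some finite $c = c(\kappa_0)$; since $[a,b]$ was an arbitrary subinterval of $(0,\infty)$ one concludes this for a.e.\ $\kappa_0 > 0$. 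The lower bound $\phi_j'(\kappa_0) \ge 0$ at points of differentiability is immediate from monotonicity of $\phi_j$. A mild technical point to address is that $\phi_j$ need not be differentiable at the \emph{same} $\kappa_0$ for all $j$; this is handled by first discarding, for each $j$, the measure-zero bad set, and taking the (still full-measure) intersection over $j$, which is legitimate since there are only countably many $j$.

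I expect the main obstacle to be the measure-theoretic bookkeeping in part (c): making precise the "almost every $\kappa_0$, subsequence depending on $\kappa_0$" statement and interchanging the a.e.-differentiability over the countable family $\{\ep_j\}$ with the Fatou inequality applied to the derivatives $\phi_j'$. One must be careful that the constant $c$ is allowed to depend on $\kappa_0$ (it does, as the statement says "there exist a constant $c$"), and that the subsequence is extracted \emph{after} fixing $\kappa_0$. Parts (a) and (b) are routine once the local-reduction formalism of Section~\ref{subsec:local-reduct} is used to legitimately split $L_{\kappa,\ep} = L_{\ep} + \kappa A$ along a fixed sweepout — the only thing to check there is that the area values $A(f_{\gamma,t})$ are genuinely $\kappa$-independent and $\ep$-independent quantities attached to the sweepout, which follows from Lemma~\ref{lemm:area-properties} and the definition of $f_{\gamma,t}$.
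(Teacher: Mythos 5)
Your proposal is correct and follows essentially the same route as the paper: part (a) rests on the observation that $\tfrac1\kappa L_{\kappa,\ep} = \tfrac1\kappa L_\ep + A$ is non-increasing in $\kappa$ because $L_\ep \ge 0$ while $A$ is $\kappa$-independent (the paper records this as an identity, eq.~\eqref{eq:struwe-identity}, and passes through a near-optimal sweepout rather than taking $\inf\sup$ directly, but the content is identical); part (b) uses the same pointwise bound $(\ep^2+s^2)^{(1+\ep)/2} - \ep^{1+\ep} \le 1 + s^2$ on a $\cP_1$-competitor; and part (c) is the same Fatou argument with the uniform-in-$j$ total-variation bound coming from (a) and (b). The only small thing you supply that the paper leaves implicit is the remark that each $\phi_j$ is a.e.\ differentiable separately and one then intersects countably many full-measure sets — a correct and worthwhile clarification, but not a different method.
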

\begin{proof}
For part (a), we note that for $\kappa > \kappa' > 0$, any $u \in W^{1, 1 + \ep}(S^1; S^2)$ and any $f \in \cE(u)$, there holds
\begin{equation}\label{eq:struwe-identity}
\frac{L_{\kappa', \ep}(u, f)}{\kappa'} - \frac{L_{\kappa, \ep}(u, f)}{\kappa} = \frac{\kappa - \kappa'}{\kappa' \cdot \kappa}L_{\ep}(u) \geq 0.
\end{equation}
Now, given $\delta > 0$, we may choose $\gamma \in \cP_{\ep}$ such that 
\[
\max_{t \in [0, 1]} L_{\kappa', \ep}(\gamma(t), f_{\gamma, t}) < \omega_{\kappa', \ep} + \delta.
\]
Dividing by $\kappa'$ and using~\eqref{eq:struwe-identity}, we deduce that 
\[
\frac{\omega_{\kappa, \ep}}{\kappa} \leq \max_{t \in [0, 1]}\frac{L_{\kappa, \ep}(\gamma(t), f_{\gamma, t})}{\kappa} \leq \max_{t \in [0, 1]}\frac{L_{\kappa', \ep}(\gamma(t), f_{\gamma, t})}{\kappa'} < \frac{\omega_{\kappa', \ep}}{\kappa'} + \frac{\delta}{\kappa'},
\]
where the first inequality follows from the definition of $\omega_{\kappa, \ep}$. Since $\delta > 0$ is arbitrary, we get the desired monotonicity.

For part (b) we begin by noting that for all $u \in W^{1, 2}(S^1; S^2)$ and $\ep \in (0, 1)$ we have
\begin{equation}\label{eq:e-upperbound}
\int_{S^1}\big[(\ep^2 + |u'|^2)^{\frac{1 + \ep}{2}} - \ep^{1 + \ep}\big] d\theta \leq \int_{S^1}(1 + |u'|^2)^{\frac{1 + \ep}{2}}d\theta \leq \int_{S^1}\big((1 + |u'|^2) - 1\big) d\theta + 2\pi.
\end{equation}
Next, given $\delta > 0$, we may choose $\gamma \in \cP_1$ such that 
\[
\max_{t \in [0, 1]}L_{\kappa, 1}(\gamma(t), f_{\gamma, t}) < \omega_{\kappa, 1} + \delta.
\]
Combining this with~\eqref{eq:e-upperbound} gives
\[
\omega_{\kappa, \ep} \leq \max_{t \in [0, 1]}L_{\kappa, \ep}(\gamma(t), f_{\gamma}) \leq \max_{t \in [0, 1]}L_{\kappa, 1}(\gamma(t), f_{\gamma, t}) + 2\pi < \omega_{\kappa, 1} + 2\pi + \delta,
\]
where the first inequality follows from the definition of $\omega_{\kappa, \ep}$ and the fact that $\cP_1 \subset \cP_\ep$. Letting $\delta \to 0$ yields the asserted inequality.

For part (c), thanks to part (a) and basic real analysis, we know that the derivative $\frac{d}{d\kappa}\left( -\frac{\omega_{\kappa, \ep_j}}{\kappa}\right)$ exists for almost every $\kappa > 0$ and is non-negative. Moreover, for all $0 < a < b$, there holds
\[
\int_{a}^b\frac{d}{d\kappa}\left( -\frac{\omega_{\kappa, \ep_j}}{\kappa}\right) d\kappa \leq \frac{\omega_{a, \ep_j}}{a} - \frac{\omega_{b, \ep_j}}{b} \leq  \frac{\omega_{a, \ep_j}}{a},
\]
where the last inequality follows from Lemma~\ref{lemm:min-max-properties}(b). Thus, by Fatou's lemma, we obtain
\[
\int_{a}^b \liminf_{j \to \infty}\frac{d}{d\kappa}\left( -\frac{\omega_{\kappa, \ep_j}}{\kappa}\right) d\kappa \leq \liminf_{j \to \infty}\int_{a}^b\frac{d}{d\kappa}\left( -\frac{\omega_{\kappa, \ep_j}}{\kappa}\right) d\kappa  \leq \liminf_{j \to \infty} \frac{\omega_{a, \ep_j}}{a} \leq \frac{\omega_{a, 1} + 2\pi}{a},
\]
where we used part (b) to get the last inequality. Consequently, we have 
\[0 \leq \liminf_{j \to \infty}\frac{d}{d\kappa}\left( -\frac{\omega_{\kappa, \ep_j}}{\kappa}\right)  < \infty, \text{ for almost every $\kappa \in (a, b)$.}\]
The result follows from the arbitrariness of $a$ and $b$.
\end{proof}

We next explain how the derivative estimate in Proposition~\ref{prop:struwe-trick} translates into uniform length upper bounds.
\begin{prop}\label{prop:length-bound}
Suppose for some $\kappa > 0$ and $\ep \in (0, 1/2)$ we have
\[
\frac{d}{d\kappa}\big( -\frac{\omega_{\kappa, \ep}}{\kappa} \big) \leq c,
\]
then for large enough $n$ there exist sweepouts $\gamma_n \in \cP_{\ep}$ with the following properties:
\begin{enumerate}
\item[(a)] $\max_{t \in [0, 1]}L_{\kappa, \ep}(\gamma_n(t), f_{\gamma_n, t}) \leq \omega_{\kappa, \ep} + \frac{\kappa}{n}$.
\vskip 1mm
\item[(b)] $L_{\ep}(\gamma_n(t)) \leq 8\kappa^2c$ whenever $L_{\kappa, \ep}(\gamma_n(t), f_{\gamma_n, t}) \geq \omega_{\kappa, \ep} - \frac{\kappa}{n}$.
\end{enumerate}
\end{prop}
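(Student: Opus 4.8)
The plan is to run the variational version of Struwe's monotonicity argument, using only the algebraic identity behind Proposition~\ref{prop:struwe-trick}(a) together with the derivative hypothesis. Recall from \eqref{eq:struwe-identity} that for $0 < \kappa' < \kappa$, any $u \in W^{1, 1 + \ep}(S^1; S^2)$ and any $f \in \cE(u)$ one has
\[
\frac{L_{\kappa', \ep}(u, f)}{\kappa'} = \frac{L_{\kappa, \ep}(u, f)}{\kappa} + \frac{\kappa - \kappa'}{\kappa'\kappa}\, L_{\ep}(u),
\]
where the last term is nonnegative. By Proposition~\ref{prop:struwe-trick}(a) the function $\phi(\lambda) := -\omega_{\lambda, \ep}/\lambda$ is non-decreasing, so the hypothesis that $\phi'(\kappa)$ exists and is $\le c$ gives in particular $\limsup_{h \to 0^+} \frac{\phi(\kappa) - \phi(\kappa - h)}{h} \le c$. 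We may assume $c > 0$ (enlarging $c$ only weakens the conclusion); then there is $h_0 \in (0, \kappa)$ such that
\[
\frac{\omega_{\kappa - h, \ep}}{\kappa - h} \;\le\; \frac{\omega_{\kappa, \ep}}{\kappa} + 2 c h \qquad \text{for all } 0 < h < h_0.
\]

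Next, I fix $n$ large enough that $h_n := \tfrac{1}{3 c n} < h_0$, and using the definition of $\omega_{\kappa - h_n, \ep}$ choose a sweepout $\gamma_n \in \cP_{\ep}$ — note that $\cP_\ep$ does not depend on the curvature parameter — such that
\[
\max_{t \in [0, 1]} \frac{L_{\kappa - h_n, \ep}(\gamma_n(t), f_{\gamma_n, t})}{\kappa - h_n} \;<\; \frac{\omega_{\kappa - h_n, \ep}}{\kappa - h_n} + c h_n \;\le\; \frac{\omega_{\kappa, \ep}}{\kappa} + 3 c h_n .
\]
Since $f_{\gamma_n, t}$ depends only on $\gamma_n$, the displayed identity applies with the same extension for both $\kappa$ and $\kappa - h_n$. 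Discarding the nonnegative term $\frac{h_n}{(\kappa - h_n)\kappa} L_\ep(\gamma_n(t))$ in that identity gives, for every $t$,
\[
\frac{L_{\kappa, \ep}(\gamma_n(t), f_{\gamma_n, t})}{\kappa} \;\le\; \frac{L_{\kappa - h_n, \ep}(\gamma_n(t), f_{\gamma_n, t})}{\kappa - h_n} \;<\; \frac{\omega_{\kappa, \ep}}{\kappa} + 3 c h_n,
\]
that is $L_{\kappa, \ep}(\gamma_n(t), f_{\gamma_n, t}) < \omega_{\kappa, \ep} + 3 c \kappa h_n = \omega_{\kappa, \ep} + \kappa/n$, which is part (a).

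For part (b), suppose $t$ satisfies $L_{\kappa, \ep}(\gamma_n(t), f_{\gamma_n, t}) \ge \omega_{\kappa, \ep} - \kappa/n$. Rearranging the identity to isolate $L_\ep$ and invoking the two displays above,
\[
\frac{h_n}{(\kappa - h_n)\kappa}\, L_\ep(\gamma_n(t)) = \frac{L_{\kappa - h_n, \ep}(\gamma_n(t), f_{\gamma_n, t})}{\kappa - h_n} - \frac{L_{\kappa, \ep}(\gamma_n(t), f_{\gamma_n, t})}{\kappa} < 3 c h_n + \frac{1}{n},
\]
hence $L_\ep(\gamma_n(t)) < \kappa^2\big(3c + \tfrac{1}{n h_n}\big) = 6 \kappa^2 c \le 8 \kappa^2 c$, which is part (b).

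The argument is elementary, and the only point requiring care is the bookkeeping linking $n$, $c$ and the shift $h_n$: one needs $h_n$ small enough (of order $1/(cn)$) that a near-optimal sweepout at parameter $\kappa - h_n$ remains near-optimal at $\kappa$ within $\kappa/n$, yet not so small that the factor $1/(n h_n)$ multiplying $L_\ep$ in the identity degenerates; the choice $h_n = 1/(3cn)$ balances these competing requirements. There is no genuine analytic obstacle here — the substantive work (the monotonicity of $\omega_{\kappa, \ep}/\kappa$ and its uniform-in-$\ep$ consequences) has already been carried out in Proposition~\ref{prop:struwe-trick}.
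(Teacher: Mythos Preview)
Your proof is correct and takes essentially the same approach as the paper: both set $\kappa_n = \kappa - h_n$ with $h_n$ of order $1/(cn)$, choose a near-optimal sweepout at $\kappa_n$, and read off (a) and (b) from the identity~\eqref{eq:struwe-identity} together with the difference-quotient bound; the only differences are cosmetic (the paper uses $h_n = 1/(4cn)$ and slack $\kappa_n/(2n)$ in place of your $h_n = 1/(3cn)$ and slack $ch_n$).
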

\begin{proof}
We define $\kappa_n = \kappa - (4cn)^{-1}$ and note that by the assumed derivative bound, for large enough $n$ there holds
\begin{equation}\label{eq:difference-quotient}
\frac{1}{\kappa - \kappa_n}\Big( \frac{\omega_{\kappa_n, \ep}}{\kappa_n} - \frac{\omega_{\kappa, \ep}}{\kappa} \Big) \leq 2c.
\end{equation}
By the definition of $\omega_{\kappa_n, \ep}$, we may choose $\gamma_n \in \cP_{\ep}$ such that 
\begin{equation}\label{eq:nearby-min-seq}
\max_{t \in [0, 1]} L_{\kappa_n, \ep}(\gamma_n(t), f_{\gamma_n, t}) < \omega_{\kappa_n, \ep} + \frac{\kappa_n}{2n}.
\end{equation}
Then from~\eqref{eq:struwe-identity} and~\eqref{eq:difference-quotient} we get
\[
\frac{1}{\kappa}\max_{t \in [0, 1]} L_{\kappa, \ep}(\gamma_n(t), f_{\gamma_n, t}) \leq \frac{1}{\kappa_n}\max_{t \in [0, 1]} L_{\kappa_n, \ep}(\gamma_n(t), f_{\gamma_n, t}) < \frac{\omega_{\kappa_n, \ep}}{\kappa_n} + \frac{1}{2n} \leq \frac{\omega_{\kappa, \ep}}{\kappa} + \frac{1}{n}.
\]
This proves property (a). To check (b), suppose for some $t$ we have
\[
\frac{1}{\kappa}L_{\kappa, \ep}(\gamma_n(t), f_{\gamma_n, t}) \geq \frac{\omega_{\kappa, \ep}}{\kappa} - \frac{1}{n}.
\]
Combining this with~\eqref{eq:nearby-min-seq} and~\eqref{eq:struwe-identity}, 
\begin{align*}
\frac{1}{\kappa_n \cdot \kappa} L_{\ep}(\gamma_n(t)) &\leq \frac{1}{\kappa - \kappa_n}\Big( \frac{\omega_{\kappa_n, \ep}}{\kappa_n} - \frac{\omega_{\kappa, n}}{\kappa} + \frac{3}{2n} \Big)\\
&\leq \frac{1}{\kappa - \kappa_n}\Big( \frac{\omega_{\kappa_n, \ep}}{\kappa_n} - \frac{\omega_{\kappa, \ep}}{\kappa} \Big) + 6c\\
&\leq 8c,
\end{align*}
where we used~\eqref{eq:difference-quotient} to get the last line. Consequently,
\[
L_\ep(\gamma_n(t)) \leq 8 \kappa^2c,
\]
as asserted in (b).
\end{proof}

\subsection{Existence of non-trivial critical points for the perturbed functional}
\label{SS:pseudo-gradient flow}
\begin{lemm}\label{lemm:small-L-not-max}
Given $\kappa  > 0$, there exist positive constants $\eta_1, \eta_2 > 0$ and $\ep_0 < 1/2$, depending only on $\kappa$, such that if $\ep \in (0, \ep_0)$, $\gamma \in \cP_{\ep}$ and $t_0 \in [0, 1]$ are such that
\[
L_{\ep}(\gamma(t_0)) < \eta_1,
\]
then 
\[
L_{\kappa, \ep}(\gamma(t_0), f_{\gamma, t_0}) < \max_{t \in [0, 1]} L_{\kappa, \ep}(\gamma(t), f_{\gamma, t}) - \eta_2.
\]
\end{lemm}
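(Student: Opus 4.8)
The heart of the matter is an isoperimetric fact: a short curve encloses an area lying very close to the lattice $V\ZZ$, where $V:=\Area_g(S^2)$. By contraposition, a slice of $\gamma$ whose enclosed area is a definite distance from $V\ZZ$ must have definite length, hence definite $L_\ep$; one then produces such a slice, necessarily of larger $L_{\kappa,\ep}$-value, by tracking the enclosed area along $\gamma$. \emph{Step 1 (isoperimetric input).} I would first record that there are $C_1,\delta_1>0$, depending only on $g$, such that every $u\in W^{1,1+\ep}(S^1;S^2)$ with $L(u)\le\delta_1$ and every $f\in\cE(u)$ satisfy $\dist(A(f),V\ZZ)\le C_1L(u)^2$; equivalently, $L(u)\ge\Psi\big(\dist(A(f),V\ZZ)\big)$ for every $u$ and every $f\in\cE(u)$, where $\Psi(v):=\min\{\delta_1,\sqrt{v/C_1}\}$. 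Indeed, taking $\delta_1$ below the injectivity radius, a curve $u$ of length $\le\delta_1$ lies in a geodesic ball about $u(\theta_0)$ and is filled by the geodesic cone over $u(\theta_0)$; this cone lies in $\widetilde\cE(u)$, has enclosed area at most $C_1L(u)^2$ by an elementary estimate, and differs from $f$ by an element of $V\ZZ$ in its enclosed area by Lemma~\ref{lemm:area-properties}(a) and Remark~\ref{rmk:area-extend}, which gives the bound; the case $L(u)>\delta_1$ is trivial.

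\emph{Step 2 (the area function and the constants).} For $\gamma\in\cP_\ep$ set $a(t):=A(f_{\gamma,t})$. By the continuity of $t\mapsto L_{\kappa,\ep}(\gamma(t),f_{\gamma,t})$ (Lemma~\ref{lemm:min-max-properties}(a)) and of $t\mapsto L_\ep(\gamma(t))$ (Lemma~\ref{lemm:basic-estimates}(b)), $a$ is continuous; moreover $a(0)=0$ since $f_{\gamma,0}$ is constant, and $a(1)=V$ since $f_{\gamma,1}$ induces the degree-one map $h_\gamma$ (as in the proof of Lemma~\ref{lemm:area-properties}(a)), so $a$ attains every value of $[0,V]$. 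Now fix $v_0\in(0,V/2)$ with $v_0<C_1\delta_1^2$, put $\lambda_0:=\min\{1,(\tfrac{1}{2A_0}\sqrt{v_0/C_1})^{3/2}\}$, and take $v_0$ small enough (depending on $\kappa$ and the fixed $g$) that $\lambda_0>\kappa v_0$ — possible since $\lambda_0\asymp v_0^{3/4}$ as $v_0\to0$ while $\kappa v_0$ is linear. Set $\eta_2:=\tfrac12(\lambda_0-\kappa v_0)>0$, and choose $\ep_0\in(0,\tfrac12)$ and $\eta_1\in(0,1)$, depending on $\kappa,g$, small enough that $A_0\ep_0\le\tfrac12\sqrt{v_0/C_1}$, that $L_\ep(u)<\eta_1$ forces $L(u)<\delta_1$ (via Lemma~\ref{lemm:basic-estimates}(a)), and that $\delta:=C_1\big(\sup\{L(u):L_\ep(u)<\eta_1\}\big)^2$ satisfies $\delta<v_0$ and $\kappa\delta+\eta_1<\eta_2$.

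\emph{Step 3 (the comparison slice).} Suppose $L_\ep(\gamma(t_0))<\eta_1$. By Step 1, $\dist(a(t_0),V\ZZ)\le\delta$, so $a(t_0)=kV+r$ with $k\in\ZZ$ and $|r|\le\delta<v_0$. If $k\ge1$, then $0\le kV-v_0<a(t_0)$, so the intermediate value theorem gives $\tau\in[0,t_0]$ with $a(\tau)=kV-v_0$; thus $\dist(a(\tau),V\ZZ)=v_0$, whence $L(\gamma(\tau))\ge\sqrt{v_0/C_1}$ by Step 1 and $L_\ep(\gamma(\tau))\ge\lambda_0$ by Lemma~\ref{lemm:basic-estimates}(a) (using $A_0\ep_0\le\tfrac12\sqrt{v_0/C_1}$), so
\[
L_{\kappa,\ep}(\gamma(\tau),f_{\gamma,\tau})-L_{\kappa,\ep}(\gamma(t_0),f_{\gamma,t_0})\;\ge\;\lambda_0+\kappa(kV-v_0)-\eta_1-\kappa(kV+\delta)\;>\;\eta_2 .
\]
If $k\le0$, then $a(t_0)\le\delta<V-v_0<V=a(1)$, so there is $\tau$ with $a(\tau)=V-v_0$, and the analogous estimate (with $kV$ replaced by $0$ on the $t_0$-side and by $V$ on the $\tau$-side) again yields a gap exceeding $\eta_2$. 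Since $\tau\in[0,1]$, in either case $L_{\kappa,\ep}(\gamma(t_0),f_{\gamma,t_0})<L_{\kappa,\ep}(\gamma(\tau),f_{\gamma,\tau})-\eta_2\le\max_{t\in[0,1]}L_{\kappa,\ep}(\gamma(t),f_{\gamma,t})-\eta_2$.

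\emph{Main obstacle.} I expect the genuinely delicate point to be the case $k\ge2$, a ``winding'' sweepout whose partial extension $f_{\gamma,t_0}$ encloses area well above $V$: here one cannot compare against the endpoint slice $t=1$, and the right choice of comparison slice is the one at enclosed area $kV-v_0$ — one full turn less a definite amount — which the intermediate value theorem still locates inside $[0,t_0]$. The second place that needs care is Step 1 itself: because extensions in $\cE(u)$ are merely continuous and the curves need not be embedded, the isoperimetric estimate must be obtained through the geodesic-cone filling together with the mod-$V\ZZ$ rigidity of the enclosed area in Lemma~\ref{lemm:area-properties}(a), rather than via a classical region-based isoperimetric inequality.
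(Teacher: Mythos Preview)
Your argument is correct and takes a genuinely different route from the paper's. Both proofs rest on the same isoperimetric fact — a short curve has enclosed area close to $V\ZZ$ — but deploy it differently. The paper tracks the \emph{length}: it finds the first time $t_1$ where $L_\ep(\gamma(t_1))$ reaches a threshold $\alpha$, so that $L_\ep \le \alpha$ on all of $[t_0,t_1]$, and then argues that the concatenation $h_0 - f_{\gamma,t_0} + f_{\gamma,t_1} - h_1$ is null-homotopic (because every intermediate slice is short), yielding the direct bound $|A(f_{\gamma,t_1}) - A(f_{\gamma,t_0})| \le |A(h_0)| + |A(h_1)| \lesssim \alpha^{1/3}\big(L_\ep(\gamma(t_0)) + L_\ep(\gamma(t_1))\big)$. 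You instead track the \emph{area} function $a(t)$ and use the intermediate value theorem to locate a slice $\tau$ with $\dist(a(\tau),V\ZZ)=v_0$, then apply the isoperimetric inequality contrapositively to force $L_\ep(\gamma(\tau))\ge\lambda_0$. Your approach is arguably cleaner: it sidesteps the null-homotopy step entirely and handles the winding case $k\ge 2$ transparently (the comparison slice $a(\tau)=kV-v_0$ is found inside $[0,t_0]$ by IVT, exactly as you note). The paper's approach, on the other hand, yields a slightly more quantitative area-difference estimate and never needs to locate the integer $k$, since the null-homotopy argument directly pins down the area discrepancy without reference to the lattice. Both arguments depend on $\kappa$ only through the final balancing of constants.
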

\begin{proof}
We first note that there exists $\alpha_0 > 0$ such that $\max_{t \in [0, 1]}L(\gamma(t)) \geq \alpha_0$, for otherwise the induced map $h_\gamma$ would be homotopic to a constant, contradicting the definition of $\cP_\ep$. By Lemma~\ref{lemm:basic-estimates}(a), provided $\ep_0 < \frac{\alpha_0}{2 A_0}$, this means 
\[
\max_{t \in [0, 1]} L_{\ep}(\gamma(t)) \geq (\frac{\alpha_0}{2A_0})^{1 + \ep} \geq (\frac{\alpha_0}{2A_0})^2 =: \alpha_1.
\]
Thus, with $\eta_1 < \alpha < \alpha_1$ ($\eta_1$ and $\alpha$ to be determined), there must exist some $t' \neq t_0$ so that $L_{\ep}(\gamma(t'))  = \alpha$. Assuming, without loss of generality, that $t' > t_0$, we let $t_1 = \inf\{t > t_0 \ |\ L_{\ep}(\gamma(t)) \geq \alpha\}$. In particular 
\begin{equation}\label{eq:length-small}
L_\ep(\gamma(t)) \leq \alpha \text{ for all }t \in [t_0, t_1],
\end{equation}
with equality holding at $t = t_1$. Next we claim that
\begin{equation}\label{eq:area-difference}
|A(f_{\gamma, t_1}) - A(f_{\gamma, t_0})| \leq C \ep^2 + C\alpha^{\frac{1}{3}}\big(  L_{\ep}(\gamma(t_0)) + L_{\ep}(\gamma(t_1))\big),
\end{equation}
with $C$ independent of $\ep$. To see this, we choose arbitrary points $c_i \in \gamma(t_i)(S^1)$ ($i = 0, 1$) and note that since by Lemma~\ref{lemm:basic-estimates}(a) and~\eqref{eq:length-small},
\begin{equation}\label{eq:oscillation-bound}
\|\gamma(t_i) - c_i\|_{\infty} \leq L(\gamma(t_i)) \leq A_0 \big( \ep + \alpha^{\frac{1}{1 + \ep}}\big) \leq A_0(\ep_0 + \alpha^{\frac{2}{3}}),
\end{equation}
it makes sense to define the following extension of $\gamma(t_i)$ provided $\alpha, \ep_0$ are sufficiently small depending on $A_0$ and the constant $\delta_0$ from Lemma~\ref{lemm:area-properties}:
\[
h_i(s, \theta) = \Pi\big( s\gamma(t_i)(\theta) + (1 - s)c_i \big).
\]
A direct computation as in the proof of Lemma~\ref{lemm:basic-estimates}(c) shows that 
\begin{equation}\label{eq:cap-area-bound}
|A(h_i)| \leq C\| \gamma(t_i) - c_i\|_{\infty}L(\gamma(t_i)) \leq C\big( L(\gamma(t_i)) \big)^2,
\end{equation}
where $C$ depends only on $K := \| d\Pi\|_{\infty}$. Note also that $|\big( h_i(s, \cdot) \big)_{\theta} | \leq K|\gamma(t_i)_{\theta}|$, and thus 
\begin{align*}
L_{\ep}(h_i(s, \cdot)) + 2\pi \ep^{1 + \ep} &= \int_{S^1}\big( \ep^2 + |(h_i(s, \cdot))_{\theta}|^2 \big)^{\frac{1 + \ep}{2}} d\theta\\
 &\leq K^{1 + \ep}(L_\ep(\gamma(t_i)) + 2\pi \ep^{1 + \ep}) \leq K^2(\alpha + 2\pi \ep^{1 + \ep}), \text{ for all }s \in [0, 1].
\end{align*}
In other words,
\[
L_{\ep}(h_i(s, \cdot)) \leq K^2\alpha + 2\pi ( K^2 - 1) \ep_0 < K^2(\alpha + 2\pi\ep_0).
\]
Combining this with~\eqref{eq:length-small}, we see that, decreasing $\alpha, \ep_0$ further if necessary so that $(1 + K^2)(\alpha + 2\pi \ep_0) < \alpha_1$, the concatenation $h_0  - f_{\gamma, t_0} + f_{\gamma, t_1} - h_1$ induces a null-homotopic map from $S^2$ to itself. Consequently by~\eqref{eq:cap-area-bound}, Lemma~\ref{lemm:basic-estimates} and~\eqref{eq:length-small} we infer that 
\begin{align*}
|A(f_{\gamma, t_1}) - A(f_{\gamma, t_0})| &\leq |A(h_0)| + |A(h_1)|\\
& \leq C\big( (L_{\ep}(\gamma(t_0)))^{\frac{1}{1 + \ep}} + \ep \big)^2 + C\big( (L_{\ep}(\gamma(t_1)))^{\frac{1}{1 + \ep}} + \ep \big)^2\\
& \leq C\ep^2+ C\big( L_\ep(\gamma(t_0)) \big)^{\frac{2}{1 + \ep}} + C\big( L_\ep(\gamma(t_1)) \big)^{\frac{2}{1 + \ep}}\\
&\leq C\ep^2 + C\alpha^{\frac{1 - \ep}{1 + \ep}}\big(  L_{\ep}(\gamma(t_0)) + L_{\ep}(\gamma(t_1)) \big),
\end{align*}
which implies the claim since $\frac{1 - \ep}{1 + \ep} > \frac{1}{3}$ if $\ep \in (0, 1/2)$. We next use~\eqref{eq:area-difference} and the triangle inequality to compute
\begin{align*}
L_{\kappa, \ep}(\gamma(t_1), f_{\gamma, t_1}) - L_{\kappa, \ep}(\gamma(t_0), f_{\gamma, t_0}) &\geq L_{\ep}(\gamma(t_1)) - L_{\ep}(\gamma(t_0)) - \kappa |A(f_{\gamma, t_1}) - A(f_{\gamma, t_0})|\\
&\geq (1  - C\kappa\alpha^{\frac{1 }{3}})L_{\ep}(\gamma(t_1)) - (1  + C\kappa\alpha^{\frac{1}{3}})L_{\ep}(\gamma(t_0)) - C\kappa \ep^2\\
& > (1  - C\kappa\alpha^{\frac{1}{3}})\alpha - (1  + C\kappa\alpha^{\frac{1}{3}})\eta_1 - C\kappa \ep_0^2,
\end{align*}
where we used the fact that $L_{\ep}(\gamma(t_1)) = \alpha$ and $L_{\ep}(\gamma(t_0)) < \eta_1$ to get the last line. Upon requiring, in addition to the above thresholds on $\alpha$ and $\ep_0$, that $C\kappa\alpha^{\frac{1}{3}} < 1/2$, and then choosing $\eta_1$ and $\ep_0$ so that $(1  + C\kappa\alpha^{\frac{1}{3}})\eta_1 < \alpha/8$ and $C\kappa \ep_0^2 < \alpha/8$, we conclude the proof with $\eta_2 = \alpha/4$.
\end{proof}

In the pseudo-gradient flow argument below, we shall only deform the sweepouts where $L_{\kappa, \ep}$ is close to the min-max value $\omega_{\kappa, \ep}$. In such regions there is in fact a single well-defined reduction of $L_{\kappa. \ep}$. Specifically, fixing $\ep \in (0, 1/2)$, $\kappa > 0$ and $r <\frac{1}{4} \kappa \Area_g(S^2)$, we define
\[
\cN = \{u \in W^{1, 1 + \ep}(S^1; S^2)\ |\ |L_{\kappa, \ep}(u, f) - \omega_{\kappa, \ep}| < r \text{ for some }f\in \cE(u)\}.
\]
By Lemma~\ref{lemm:basic-estimates}(c) it's not hard to see that $\cN \cap \{L(u) < C\}$ is open for all $C < \infty$, and hence $\cN$ is open. We next define the reduction on $\cN$ mentioned above. Given $u \in \cN$, we choose $f \in \cE(u)$ such that $|L_{\kappa, \ep}(u, f) - \omega_{\kappa, \ep}| < r$ and set
\[
L^{\cN}_{\kappa, \ep}(u) = L_{\kappa, \ep}(u, f).
\]
Note that $L^{\cN}_{\kappa, \ep}(u)$ is well-defined since by Lemma~\ref{lemm:area-properties}(a) and our choice of $r$, any two such choices of extensions in $\cE(u)$ enclose the same area.

\begin{lemm}\label{lemm:local-reduct-patch}
$L^{\cN}_{\kappa, \ep}$ is a $C^{1}$-functional on $\cN$, and $\delta L^{\cN}_{\kappa, \ep} = \delta L_{\kappa, \ep}$ on $\cN$.
\end{lemm}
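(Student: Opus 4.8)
The plan is to reduce the statement to the already-established local theory by covering $\cN$ with simply-connected charts and invoking Lemma~\ref{lemm:local-reduct} together with the uniqueness of enclosed area from Lemma~\ref{lemm:area-properties}(a). Concretely, fix $u \in \cN$ and choose $f \in \cE(u)$ with $|L_{\kappa, \ep}(u, f) - \omega_{\kappa, \ep}| < r$. Let $\cA$ be a simply-connected neighborhood of $u$ contained in $\cN$; such a neighborhood exists since $\cN$ is open and $W^{1, 1+\ep}(S^1; S^2)$ is a manifold (one may take a chart ball $\cB_u$ as in Section~\ref{subsec:prelim}, shrunk to lie in $\cN$). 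On $\cA$ we have the local reduction $L^{\cA}_{\kappa, \ep}$ induced by $(u, f)$, which by Lemma~\ref{lemm:local-reduct} is well-defined and $C^1$ on $\cA$.

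The key point is then to show $L^{\cN}_{\kappa, \ep} = L^{\cA}_{\kappa, \ep}$ on $\cA$, from which both conclusions follow immediately: $C^1$-ness is a local property, and $\delta L^{\cN}_{\kappa, \ep}(u) = \delta L^{\cA}_{\kappa, \ep}(u) = \delta L_{\kappa, \ep}(u)$ by the definition of $\delta L_{\kappa, \ep}$ in Section~\ref{subsec:prelim} (first variation subsection). To prove the equality of the two reductions, take any $v \in \cA$. The local reduction assigns to $v$ the value $L_{\kappa, \ep}(v, f_v)$, where $f_v \in \cE(v)$ is obtained by concatenating $f$ with a path in $\cA$ from $u$ to $v$. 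On the other hand, $L^{\cN}_{\kappa, \ep}(v) = L_{\kappa, \ep}(v, \tilde f)$ for some $\tilde f \in \cE(v)$ with $|L_{\kappa, \ep}(v, \tilde f) - \omega_{\kappa, \ep}| < r$. By Lemma~\ref{lemm:area-properties}(a), $A(f_v)$ and $A(\tilde f)$ differ by an integer multiple of $\Area_g(S^2)$; but $L_{\kappa, \ep}(v, f_v) = L_{\ep}(v) + \kappa A(f_v)$ is within $r$ of $\omega_{\kappa, \ep}$ as well --- indeed $v \in \cA \subset \cN$ so one can also run the argument of Lemma~\ref{lemm:local-reduct-patch}'s setup, or more directly note that $L^{\cA}_{\kappa, \ep}$ is continuous on the connected set $\cA$, equals $L^{\cN}_{\kappa, \ep}$ near $u$ (where both equal $L_{\kappa, \ep}(u, f)$), and differs from $L^{\cN}_{\kappa, \ep}$ only by a locally constant integer multiple of $\kappa \Area_g(S^2)$ on $\cA$ (the remark in Definition~\ref{defi:local-reduct}). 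Since the two agree at $u$ and their difference is continuous, integer-valued (times $\kappa\Area_g(S^2)$), and $\cA$ is connected, they agree on all of $\cA$. The constraint $r < \tfrac14 \kappa \Area_g(S^2)$ guarantees this integer is forced to be zero whenever one compares two admissible extensions at the same point.

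I expect the main obstacle to be bookkeeping the extension-dependence cleanly: one must be careful that the "path in $\cA$ from $u$ to $v$" used to define $f_v$ stays inside $\cN$ (automatic since $\cA \subset \cN$) and that the resulting $A(f_v)$ is a legitimate comparison point for the $r$-ball condition, so that Lemma~\ref{lemm:area-properties}(a) combined with $r < \tfrac14\kappa\Area_g(S^2)$ pins down the integer. Once $L^{\cN}_{\kappa, \ep} = L^{\cA}_{\kappa, \ep}$ on a neighborhood of each point of $\cN$, the $C^1$ regularity transfers from Lemma~\ref{lemm:local-reduct}(b), and the identity of first variations follows from~\eqref{eq:first-var-reduct} and the definition of $\delta L_{\kappa, \ep}$. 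No new estimates are needed; this is purely a patching argument.
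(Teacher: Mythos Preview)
Your approach is essentially the same as the paper's: reduce to showing $L^{\cN}_{\kappa,\ep} = L^{\cA}_{\kappa,\ep}$ on a simply-connected chart $\cA \subset \cN$ around each point, then invoke Lemma~\ref{lemm:local-reduct}(b). However, there is a gap in the step where you pin down the integer.

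You assert that $L_{\kappa,\ep}(v,f_v)$ is within $r$ of $\omega_{\kappa,\ep}$, justifying this by ``$v \in \cA \subset \cN$''. But membership in $\cN$ only says \emph{some} extension of $v$ lands in the $r$-ball around $\omega_{\kappa,\ep}$; it does not say the particular extension $f_v$ built from the path in $\cA$ does. Your fallback is the connectedness argument: the difference $L^{\cA}_{\kappa,\ep} - L^{\cN}_{\kappa,\ep}$ is an integer multiple of $\kappa\Area_g(S^2)$, continuous, and vanishes at $u$. The problem is that continuity of this difference requires continuity of $L^{\cN}_{\kappa,\ep}$, which is part of what you are trying to prove. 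The remark in Definition~\ref{defi:local-reduct} does not apply either, since $L^{\cN}_{\kappa,\ep}$ is not a priori a local reduction in the sense of Section~\ref{subsec:local-reduct}.

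The fix is exactly what the paper does: use the continuity of $L^{\cA}_{\kappa,\ep}$ (which \emph{is} already known from Lemma~\ref{lemm:local-reduct}(b)) to shrink $\cA$ so that $|L^{\cA}_{\kappa,\ep}(v) - L^{\cA}_{\kappa,\ep}(u)| < r$ for all $v \in \cA$. Then the triangle inequality gives
\[
|L^{\cA}_{\kappa,\ep}(v) - L^{\cN}_{\kappa,\ep}(v)| \leq |L^{\cA}_{\kappa,\ep}(v) - L^{\cA}_{\kappa,\ep}(u)| + |L^{\cN}_{\kappa,\ep}(u) - \omega_{\kappa,\ep}| + |\omega_{\kappa,\ep} - L^{\cN}_{\kappa,\ep}(v)| < 3r < \kappa\Area_g(S^2),
\]
forcing the integer to be zero pointwise without ever appealing to continuity of $L^{\cN}_{\kappa,\ep}$. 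You correctly flagged this bookkeeping as the main obstacle; you just need to resolve it on the $L^{\cA}$ side rather than the $L^{\cN}$ side.
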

\begin{proof}
We will show that each $u_0 \in \cN$ has a simply-connected neighborhood $\cA$ on which $L^{\cN}_{\kappa, \ep}$ coincides with $L^{\cA}_{\kappa, \ep}$, which implies the result by Lemma~\ref{lemm:local-reduct}(b). To that end, let $f_0 \in \cE(u_0)$ be an extension such that $L^{\cN}_{\kappa, \ep}(u_0) = L_{\kappa, \ep}(u_0, f_0)$ and consider the local reduction $L^{\cA}_{\kappa, \ep}$ induced by $(u_0, f_0)$ on a simply-connected neighborhood $\cA$ of $u_0$. In particular $L^{\cA}_{\kappa, \ep}(u_0) = L^{\cN}_{\kappa, \ep}(u_0)$. Now by Lemma~\ref{lemm:basic-estimates}(c) and the openness of $\cN$, we may choose $\cA$ so that $\cA \subset \cN$ and 
\[
|L^{\cA}_{\kappa, \ep}(u) - L^{\cA}_{\kappa, \ep}(u_0)| < r \text{ for all }u \in \cA.
\]
Then for $u \in \cA$ we have 
\begin{align*}
|L^{\cA}_{\kappa, \ep}(u) - L^{\cN}_{\kappa, \ep}(u)| &\leq |L^{\cA}_{\kappa, \ep}(u) - L^{\cA}_{\kappa, \ep}(u_0)| + |L^{\cN}_{\kappa, \ep}(u_0) - \omega_{\kappa, \ep}| + | \omega_{\kappa, \ep} - L^{\cN}_{\kappa, \ep}(u) | \\
& < 3r < \kappa \Vol_g(S^2),
\end{align*}
which implies $L^{\cA}_{\kappa, \ep}(u) = L^{\cN}_{\kappa, \ep}(u)$ by Lemma~\ref{lemm:area-properties}(a).
\end{proof}

\begin{prop}\label{prop:extract}
Given $\kappa > 0, \ep \in (0, \ep_0)$, suppose for some $c> 0$ there exist sweepouts $\gamma_n \in \cP_{\ep}$ satisfying the conclusions of Proposition~\ref{prop:length-bound}. Then, passing to a subsequence if necessary, there exist $t_n \in [0, 1]$ such that 
\begin{enumerate}
\item[(a)] $|L_{\kappa, \ep}(\gamma_n(t_n), f_{\gamma_n, t_n}) - \omega_{\kappa, \ep}| \leq \frac{\kappa}{n}$.
\vskip 1mm
\item[(b)] $\gamma_n(t_n)$ converges strongly in $W^{1, 1 + \ep}(S^1; S^2)$ to a critical point $u$ of $L_{\kappa, \ep}$ with $L_{\ep}(u) \leq 8\kappa^2c$, and $L_{\kappa, \ep}(u, f) = \omega_{\kappa, \ep}$ for some $f \in \cE(u)$.
\vskip 1mm
\item[(c)] $\eta_1 \leq L_{\ep}(u)$. In particular $u$ is non-constant.
\end{enumerate}
\end{prop}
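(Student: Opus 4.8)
The plan is to first reduce the assertion to producing, along a subsequence, parameters $t_n \in [0,1]$ with $|L_{\kappa,\ep}(\gamma_n(t_n), f_{\gamma_n, t_n}) - \omega_{\kappa,\ep}| \le \kappa/n$ and $\|\delta L_{\kappa,\ep}(\gamma_n(t_n))\| \to 0$. Granting this, Proposition~\ref{prop:length-bound}(b) gives $L_\ep(\gamma_n(t_n)) \le 8\kappa^2 c$ (since $L_{\kappa,\ep}(\gamma_n(t_n), f_{\gamma_n,t_n}) \ge \omega_{\kappa,\ep} - \kappa/n$), so the Palais--Smale Proposition~\ref{prop:PS} produces a further subsequence converging strongly in $W^{1,1+\ep}$ to a critical point $u$ of $L_{\kappa,\ep}$ with $L_\ep(u) \le 8\kappa^2 c$, which is (a) together with part of (b). The rest of (b) follows because strong convergence makes $\|u - \gamma_n(t_n)\|_{C^0} < \delta_0$ eventually, so Lemma~\ref{lemm:basic-estimates}(c) lets one replace $f_{\gamma_n, t_n}$ by an extension of $u$ enclosing area close to $A(f_{\gamma_n,t_n})$; by Lemma~\ref{lemm:area-properties}(a) these areas are eventually a fixed integer apart, hence eventually constant, and combined with the continuity of $L_\ep$ under strong convergence (Lemma~\ref{lemm:basic-estimates}(b)) one obtains $L_{\kappa,\ep}(u,f) = \omega_{\kappa,\ep}$ for some $f \in \cE(u)$. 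For (c): if $L_\ep(u) < \eta_1$, then $L_\ep(\gamma_n(t_n)) < \eta_1$ for large $n$, so Lemma~\ref{lemm:small-L-not-max} (applicable since $\ep < \ep_0$) gives $L_{\kappa,\ep}(\gamma_n(t_n), f_{\gamma_n,t_n}) < \max_t L_{\kappa,\ep}(\gamma_n(t), f_{\gamma_n,t}) - \eta_2 \le \omega_{\kappa,\ep} + \kappa/n - \eta_2$, contradicting $L_{\kappa,\ep}(\gamma_n(t_n), f_{\gamma_n,t_n}) \ge \omega_{\kappa,\ep} - \kappa/n$ once $n$ is large; hence $L_\ep(u) = \lim L_\ep(\gamma_n(t_n)) \ge \eta_1 > 0$.

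To find such $t_n$ I would argue by contradiction. If no such subsequence exists, then (using that $t \mapsto \|\delta L_{\kappa,\ep}(\gamma_n(t))\|$ is continuous and the competing set of $t$'s is compact) there is $\delta > 0$ so that, for all large $n$, every slice $\gamma_n(t)$ with $|L_{\kappa,\ep}(\gamma_n(t), f_{\gamma_n,t}) - \omega_{\kappa,\ep}| \le \kappa/n$ satisfies $\|\delta L_{\kappa,\ep}(\gamma_n(t))\| \ge 2\delta$; by Proposition~\ref{prop:length-bound}, all such slices lie in $\cN$ (for $r$ as in Section~\ref{SS:pseudo-gradient flow}) and satisfy $L_\ep \le 8\kappa^2 c$. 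A crucial observation is that $u \mapsto \|\delta L_{\kappa,\ep}(u)\|$ admits a uniform (H\"older) modulus of continuity on $\{L_\ep \le 8\kappa^2 c + 1\}$ --- this follows from the first-variation formula~\eqref{eq:first-var-proj}, the pointwise estimate~\eqref{eq:F-estimate2}, smoothness of $A$ and $Q$, H\"older's inequality, and Lemma~\ref{lemm:G-dL-relation} to pass between $G_{\kappa,\ep}$ (a functional on the fixed Banach space) and $\delta L_{\kappa,\ep}$ --- so that $\|\delta L_{\kappa,\ep}\| \ge \delta$ holds on a tubular neighborhood of these slices whose width $\rho_0 > 0$ is independent of $n$.

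Next I would run a localized pseudo-gradient flow. On $\cN$ pick a locally Lipschitz pseudo-gradient $V$ for $L^{\cN}_{\kappa,\ep}$ with $\|V\| \le 1$ and $\delta L_{\kappa,\ep}(V) \ge \tfrac12 \|\delta L_{\kappa,\ep}\|$ (legitimate since $\delta L^{\cN}_{\kappa,\ep} = \delta L_{\kappa,\ep}$ by Lemma~\ref{lemm:local-reduct-patch}), multiply by a Lipschitz cutoff $\chi$ equal to $1$ where the weighted length is $\ge \omega_{\kappa,\ep} - \kappa/n$ and supported where one is within $\rho_0/2$ of the above slices, the weighted length is $> \omega_{\kappa,\ep} - 2\kappa/n$, and $L_\ep < 8\kappa^2 c + 1$, and let $\Phi^n_s$ be the resulting flow (extending $\chi V$ by zero). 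Running it for time $S_n = 4\kappa/(n\delta) < \rho_0/2$ keeps the flow inside the region $\{\|\delta L_{\kappa,\ep}\| \ge \delta\}$, and one checks: $L_\ep$ changes by $o(1)$ along the flow by Lemma~\ref{lemm:basic-estimates}(b) (the flow speed being $\le 1$ and $S_n \to 0$), staying below $8\kappa^2 c + 1$; and along $s \mapsto \Phi^n_s(\gamma_n(t))$, carrying the extension $f_{\gamma_n,t}$ forward (so that Lemma~\ref{lemm:area-properties}(b) identifies the enclosed area of $\Phi^n_{S_n}\circ\gamma_n$ with the transported one), the weighted length is non-increasing and decreases at rate $\ge \delta/2$ while $\chi = 1$. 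Hence every slice with weighted length $\ge \omega_{\kappa,\ep} - \kappa/n$ ends at weighted length $< \omega_{\kappa,\ep}$, while the remaining slices, already $< \omega_{\kappa,\ep} - \kappa/n$, do not increase. Since $\Phi^n_{S_n}$ fixes the constant endpoint slices and is homotopic to the identity, $\Phi^n_{S_n} \circ \gamma_n \in \cP_\ep$, so its maximal weighted length is $< \omega_{\kappa,\ep}$, contradicting the definition of $\omega_{\kappa,\ep}$.

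The main obstacle is making the last step airtight: one must simultaneously (i) keep the flow inside $\{\|\delta L_{\kappa,\ep}\| \ge \delta\}$, which is exactly why the $n$-independent tube width $\rho_0$ --- hence the uniform modulus of continuity of $\|\delta L_{\kappa,\ep}(\cdot)\|$ --- is essential, since the available flow time $S_n$ shrinks only like $\kappa/n$; (ii) keep $L_\ep$ below the cutoff threshold along the flow so that $\chi$ stays $\equiv 1$ on the slices being pushed down; and (iii) verify that the enclosed area of the deformed sweepout $\Phi^n_{S_n} \circ \gamma_n$ agrees with the area transported along the flow, so that the computed decrease of $L^{\cN}_{\kappa,\ep}$ genuinely records a decrease of $L_{\kappa,\ep}\big(\Phi^n_{S_n}(\gamma_n(t)), f_{\Phi^n_{S_n}\circ \gamma_n, t}\big)$. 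None of these is deep, but together they explain why Section~\ref{SS:pseudo-gradient flow} is organized around the single-valued reduction $L^{\cN}_{\kappa,\ep}$ and the continuity estimates of Section~\ref{S:perturbed functional}.
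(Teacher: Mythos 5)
Your proposal is correct in outline but replaces the paper's central compactness mechanism with a different one. The paper's proof also argues by contradiction and runs a pseudo-gradient flow on the single-valued reduction $L^{\cN}_{\kappa,\ep}$, but the way it obtains an $n$-independent ``safe tube'' around the almost-maximal slices $\gamma_n(J_n)$ is fundamentally different: it uses the Palais--Smale Proposition~\ref{prop:PS} to show that the critical set $\cK$ at level $\omega_{\kappa,\ep}$ with $L_\ep \le 8\kappa^2 c$ is \emph{compact}, then (following Struwe's book) introduces two fundamental systems of neighborhoods $\cU_\delta$ and $\cV_\rho$ of $\cK$ and uses the sandwich $\cU_\beta \supset \cV_{2\rho} \supset \cV_\rho \supset \cU_\mu$ together with the contradiction hypothesis $\gamma_n(J_n) \cap \cU_\beta = \emptyset$ to get a fixed separation distance $\rho$, hence a flow for a \emph{fixed} time $\rho'/3$ and a \emph{fixed} decrease $\rho'\mu^2/3$, which beats the $O(1/n)$ excess. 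Your route instead asserts a uniform (H\"older-$\ep/2$) modulus of continuity of the gradient norm $u\mapsto \|G_{\kappa,\ep}(u)\|$ on $\{L_\ep \le C\}$ — derivable from~\eqref{eq:first-var-proj},~\eqref{eq:F-estimate2}, and H\"older, and transferred to $\|\delta L_{\kappa,\ep}\|$ via Lemma~\ref{lemm:G-dL-relation} up to a bounded factor — to get an $n$-independent tube width $\rho_0$ directly, and then runs the flow only for the \emph{shrinking} time $S_n \sim \kappa/n$, which suffices since the excess is itself $O(1/n)$. Both approaches work; yours trades the Struwe fundamental-neighborhood machinery for an explicit equicontinuity estimate that would need to be written out in full (it is not otherwise needed in the paper), while the paper's version is closer to the off-the-shelf deformation lemma and reuses the PS condition it has already established. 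One small point: Lemma~\ref{lemm:G-dL-relation} controls $\|\delta L_{\kappa,\ep}\|$ below only up to a factor $A_1(1 + \|u\|_{1,1+\ep})$, so the lower bound you propagate to the tube is some $\delta' = \delta/(A_1(1+C'))$ rather than $\delta$ itself; this is harmless but should be stated. You correctly flag the area-bookkeeping along the flow (your item (iii)) as the genuinely delicate step, which the paper handles via~\eqref{eq:agree-outside}--\eqref{eq:flow-replace-path}; the rest of (a)--(c) you derive exactly as the paper does, via Propositions~\ref{prop:length-bound}(b),~\ref{prop:PS}, Lemma~\ref{lemm:basic-estimates}(c), Lemma~\ref{lemm:area-properties}(a), and Lemma~\ref{lemm:small-L-not-max}.
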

\begin{proof}
For brevity we write $\alpha_n = \kappa/n$ and $C_0 = 8\kappa^2 c$, and let 
\[
J_n = \{t \in [0, 1]\ |\ L_{\kappa, \ep}(\gamma_n(t), f_{\gamma_n, t}) > \omega_{\kappa, \ep} - \alpha_n\}.
\]
\[
I_n = \{t \in [0, 1]\ |\ L_{\kappa, \ep}(\gamma_n(t), f_{\gamma_n, t}) \geq \omega_{\kappa, \ep} - \alpha_n/2\}.
\]
With $r$ as chosen above Lemma~\ref{lemm:local-reduct-patch}, we first want to prove the following statement:

\vspace{0.5em}
\noindent  \textit{For all $\beta \in (0, r)$, there exists $n_0 \in \NN$ such that}
\begin{equation}\tag{$\ast$}\label{eq:statement-star}
\inf\{\|\delta L_{\kappa, \ep}(\gamma_n(t))\|\ |\ t \in J_n\} < \beta, \text{ for all }n \geq n_0.
\end{equation}

\vspace{0.5em}
Assume by contradiction that there exists some $\beta \in (0, r)$ and a subsequence which we do not relabel, such that for all $n$, we have 
\begin{equation}\label{eq:contradiction}
\|\delta L_{\kappa, \ep}(\gamma_n(t))\| \geq \beta \text{ for all }t \in J_n.
\end{equation}
Letting $\cN$ be defined as above, then for large enough $n$ we have $\gamma_n(t) \in \cN$ provided $t \in J_n$, in which case 
\begin{equation}\label{eq:agree}
L_{\kappa, \ep}(\gamma_n(t), f_{\gamma_n, t})  = L^{\cN}_{\kappa, \ep}(\gamma_n(t)).
\end{equation}
Next, following the proof of~\cite[Theorem 3.4]{StruweBook} we introduce the sets
\[
\cK= \{u \in W^{1, 1 + \ep}(S^1; S^2)\ |\ \delta L_{\kappa, \ep}(u) = 0, L_{\ep}(u) \leq C_0,\ L_{\kappa, \ep}(u, f) = \omega_{\kappa, \ep} \text{ for some }f \in \cE(u)\}.
\]
\[
\cU_{\delta} = \{u \in \cN\ |\ \|\delta L_{\kappa, \ep}(u)\| < \delta, L_{\ep}(u) < C_0 + \delta, |L^{\cN}_{\kappa, \ep}(u) - \omega_{\kappa, \ep}| < \delta  \}.
\]
\[
\cV_{\rho} = \{u \in \cN\ |\ \|u - v\|_{1, 1 + \ep} < \rho \text{ for some }v \in \cK\}.
\]
By Proposition~\ref{prop:PS} and Lemma~\ref{lemm:basic-estimates}(c) we see that $\cK \subset \cN$ is a compact set, and that both $\{\cU_\delta\}_{\delta > 0}$ and $\{\cV_{\rho}\}_{\rho>0}$ form fundamental systems of neighborhoods of $\cK$ (see~\cite[Lemma 2.3]{StruweBook}). We briefly explain the argument for $\{\cU_{\delta}\}_{\delta >0}$. Assume by contradiction that there exists a neighborhood $\cB$ of $\cK$ and a sequence $\delta_j \to 0$ such that for all $j$ we can find $u_j \in \cU_{\delta_j} \setminus \cB$. By Proposition~\ref{prop:PS}, $u_j$ has a subsequence, which we do not relabel, converging strongly in $W^{1, 1 + \ep}(S^1; S^2)$ to $u$ satisfying $\delta L_{\kappa, \ep}(u) = 0$ and $L_{\ep}(u) \leq C_0$. Next, for sufficiently large $j$, by Lemma~\ref{lemm:basic-estimates}(a)(c), the strong $W^{1, 1 + \ep}$-convergence of $u_j$ to $u$ and Sobolev embedding, we see that there exists $f_j \in \cE(u)$ such that  
\[
\lim_{j \to \infty}|L^{\cN}_{\kappa, \ep}(u_j) - L_{\kappa, \ep}(u, f_j)| = 0.
\]
Recalling the definition of $\cU_{\delta_j}$ and that $\delta_j \to 0$, we deduce that 
\[
\lim_{j \to \infty} |L_{\kappa, \ep}(u, f_j) - \omega_{\kappa, \ep}| = 0,
\]
and hence the sequence $L_{\kappa, \ep}(u, f_j)$ is eventually constantly equal to $\omega_{\kappa, \ep}$ by Lemma~\ref{lemm:area-properties}(a). This shows that $u \in \cK$, a contradiction since $u_j \notin \cB$ for all $j$.

Returning to the main line of argument, there exist $\rho, \mu$ sufficiently small so that 
\[
\cU_{\beta} \supset \cV_{2\rho} \supset \cV_{\rho} \supset \cU_{\mu}.
\]
This implies by~\eqref{eq:contradiction} that for $n$ large enough, the set $\gamma_n(J_n)$ is disjoint from $\cU_{\beta}$ and hence is separated from $\cU_{\mu}$ by at least a distance of $\rho$. That is,
\begin{equation}\label{eq:separated}
B_{\rho}(\gamma_n(J_n)) \cap \cU_{\mu} = \emptyset.
\end{equation}
On the other hand, if $n$ is so large the $\alpha_n < \mu/2$, it is not hard to see with the help of Lemma~\ref{lemm:basic-estimates}(a)(c), the upper bound in Proposition~\ref{prop:length-bound}(b) and~\eqref{eq:agree} that there exists $\rho' < \rho$, independent of $n$, such that,  
\begin{equation}\label{eq:contained}
B_{\rho'}(\gamma_n(J_n)) \subset \{u \in \cN\ |\ L_{\ep}(u) < C_0 + \mu,\ |L^{\cN}_{\kappa, \ep}(u) - \omega_{\kappa, \ep}| < \mu\}.
\end{equation}

Now since $L^\cN_{\kappa, \ep}$ is a $C^{1}$-functional on $\cN$, it possesses a pseudo-gradient vector field $X: \cN \setminus \cC \to W^{1, 1 + \ep}(S^1; \RR^N)$, where $\cC = \{u \in \cN\ |\ \delta L_{\kappa, \ep}(u) = 0 \}$ (see~\cite[Lemma 3.9]{StruweBook} or~\cite[p. 206]{Palais70}), with the property that 
\begin{enumerate}
\item[(p1)] $X(u) \in \cT_u$ for all $u \in \cN \setminus \cC$.
\item[(p2)] $\|X(u)\|_{1, 1 + \ep} < 2\min\{1, \|\delta L_{\kappa, \ep}(u)\|\}$.
\item[(p3)] $\langle \delta L_{\kappa, \ep}(u), X(u) \rangle < -\min\{1, \|\delta L_{\kappa, \ep}(u)\|\}\|\delta L_{\kappa, \ep}(u)\|$.
\end{enumerate}
Consider the flow generated by $X$, denoted 
\[
\Phi: \{(s, u)\ |\ u \in \cN \setminus \cC, s \in [0, T(u))\} \to \cN,
\]
where $T(u)$ is the maximal existence time for the integral curve starting at $u $. Then by property (p2) along with~\eqref{eq:separated} and~\eqref{eq:contained}, we see that for all $n$ large enough and $u \in \gamma_n(J_n)$, there holds $T(u) \geq \rho'/2$ and 
\[
\|\delta L_{k, \ep}(\Phi_s(u))\| \geq \mu,\text{ for } s \in [0, \rho'/2).
\]
Combining this with (p3), Proposition~\ref{prop:length-bound}(a) and~\eqref{eq:agree}, we see that following hold for all $t \in J_n$: First, 
\begin{equation}\label{eq:decrease}
L_{\kappa, \ep}^{\cN}\big(\Phi_{s}(\gamma_n(t))\big) \leq L_{\kappa, \ep}(\gamma_n(t), f_{\gamma_n, t}), \text{ for }s \in [0, \rho'/2);
\end{equation}
Second, for $n$ large enough,
\begin{equation}\label{eq:pushdown}
L_{\kappa, \ep}^{\cN}(\Phi_{\rho'/3}(\gamma_n(t))) \leq L_{\kappa, \ep}^{\cN}(\gamma_n(t))  - \frac{\rho' \mu^2}{3} < \omega_{\kappa, \ep} + \alpha_n - \frac{\rho' \mu^2}{3} < \omega_{\kappa, \ep} - \alpha_n/2.
\end{equation}

To continue, we take a continuous cut-off function $\zeta_n$ so that $\zeta_n(t) = 1$ on $I_n$ and $\zeta_n(t) = 0$ outside of a compact subset of $J_n$, and define, for $(s, t) \in [0, 1] \times [0, 1]$, 
\[
\Gamma_n(s, t) = \left\{
\begin{array}{cc}
\Phi\big(s\zeta_n(t)\rho'/3, \gamma_n(t)\big) & \text{, if }t \in J_n,\\
\gamma_n(t) & \text{, if }t \notin J_n.
\end{array}
\right.
\]
Letting $\widetilde{\gamma}_n = \Gamma_n(1, \cdot)$, then by Lemma~\ref{lemm:small-L-not-max}, eventually $0, 1$ lie outside of $J_n$ and hence are still mapped to constants by $\widetilde{\gamma}_n$. The continuity properties of the flow $\Phi$ then implies that $\widetilde{\gamma}_n \in \cP_{\ep}$. Moreover, we claim the following two properties: First of all, 
\begin{equation}\label{eq:agree-outside}
L_{\kappa, \ep}(\widetilde{\gamma}_n(t), f_{\widetilde{\gamma}_n, t}) = L_{\kappa, \ep}(\gamma_n(t), f_{\gamma_n, t}), \text{ for }t \notin J_n.
\end{equation}
Secondly, 
\begin{equation}\label{eq:flow-line-inside}
L_{\kappa, \ep}(\widetilde{\gamma}_n(t), f_{\widetilde{\gamma}_n, t}) = L^{\cN}_{\kappa, \ep}(\widetilde{\gamma}_n(t)), \text{ for }t \in J_n. 
\end{equation}
To see these, note that given $t \in [0, 1]$, we may use $\Gamma_n$ to construct a homotopy of extensions to show that 
\begin{equation}\label{eq:flow-replace-path}
L_{\kappa, \ep}(\widetilde{\gamma}_n(t), f_{\widetilde{\gamma}_n, t}) = L_{\kappa, \ep}(\widetilde{\gamma}_n(t), f_{\gamma_n, t} + \Gamma_n(\cdot, t)),
\end{equation}
which implies~\eqref{eq:agree-outside} by the definition of $\Gamma_n$ and the fact that $\gamma_n(t) = \widetilde{\gamma}_n(t)$ for $t \notin J_n$. As for~\eqref{eq:flow-line-inside}, note that by an argument similar to Lemma~\ref{lemm:min-max-properties}(a), for $t \in J_n$, the function
\[
s  \mapsto L_{\kappa, \ep}(\Gamma_{n}(s, t), f_{\gamma_n, t} + \Gamma_n(\cdot, t)|_{[0, s]})
\]
is continuous on $[0, 1]$. Hence, by Lemma~\ref{lemm:area-properties}(a) it differs from $s \mapsto L^{\cN}_{\kappa, \ep}(\Gamma_n(s, t))$ by a fixed integer multiple of $\kappa\cdot \Area_g(S^2)$. Since the two functions coincide at $s = 0$ by~\eqref{eq:agree}, they agree for $s \in [0, 1]$. This combined with~\eqref{eq:flow-replace-path} give~\eqref{eq:flow-line-inside}. 

By~\eqref{eq:agree-outside},~\eqref{eq:flow-line-inside},~\eqref{eq:decrease} and~\eqref{eq:pushdown} we see that
\[
L_{\kappa, \ep}(\widetilde{\gamma}_n(t), f_{\widetilde{\gamma}_n, t}) < \omega_{\kappa, \ep} - \frac{\alpha_n}{2}, \text{ for all }t \in I_n,\]
and that
\[
L_{\kappa, \ep}(\widetilde{\gamma}_n(t), f_{\widetilde{\gamma}_n, t}) \leq L_{\kappa, \ep}(\gamma_n(t), f_{\gamma_n, t}) \leq \omega_{\kappa, \ep} - \frac{\alpha_n}{2}, \text{ for }t \notin I_n.
\] 
Hence, we conclude that for $n$ sufficiently large there holds 
\[
\max_{t \in [0, 1]}L_{\kappa, \ep}(\widetilde{\gamma}_n(t), f_{\widetilde{\gamma}_n, t}) < \omega_{\kappa, \ep} - \frac{\alpha_n}{2},
\]
which contradicts the fact that $\widetilde{\gamma}_n \in \cP_{\ep}$, and hence the property~\eqref{eq:statement-star} must hold. Consequently there exists a subsequence of $\gamma_n$, which we do not relabel, and a sequence of times $t_n \in J_n$, such that $\gamma_n(t_n)$ has, by Proposition~\ref{prop:length-bound}(b) and Proposition~\ref{prop:PS}, a subsequence converging strongly in $W^{1, 1 + \ep}$ to $u$ with $\delta L_{\kappa, \ep}(u) = 0$ and $L_{\ep}(u) \leq C_0$. 

We are now ready to verify the conclusions of the Proposition. Part (a) is immediate from the definition of $J_n$. For part (b) it remains to find $f \in \cE(u)$ such that $L_{\kappa, \ep}(u, f) = \omega_{\kappa, \ep}$. Note that by the same reasoning as in the proof that $\{\cU_{\delta}\}_{\delta > 0}$ form a fundamental system of neighborhoods of $\cK$, for sufficiently large $n$ there exists $f_n \in \cE(u)$, with the property that
\[
|A(f_{\gamma_n, t_n}) - A(f_n)| \leq C \|\gamma_n(t_n) - u \|_{1, 1 + \ep} \big(2\ep +  L_\ep(\gamma_n(t_n))^{\frac{1}{1 + \ep}} + L_\ep(u)^{\frac{1}{1 + \ep}} \big) \to 0 \text{ as }n \to \infty.
\]
Consequently
\[
\lim_{n \to \infty}|L_{\kappa, \ep}(\gamma_n(t_n), f_{\gamma_n, t_n}) - L_{\kappa, \ep}(u, f_n)| = 0.
\]
Combining this with (a), which we just proved, and Lemma~\ref{lemm:area-properties}(a), we see that eventually $L_{\kappa, \ep}(u, f_n)$ is constantly equal to $\omega_{\kappa, \ep}$, yielding the desired extension of $u$. Finally, part (c) can be deduced from the strong $W^{1, 1 + \ep}$-convergence of  $\gamma_n(t_n)$ to $u$, together with Lemma~\ref{lemm:small-L-not-max}, the definition of $J_n$, and the fact that $\alpha_n \to 0$ as $n \to \infty$. The proof is complete.
\end{proof}

\section{Passage to the limit as $\ep \to 0$}
\label{S:pass to limit}
In this section we complete the proof of the main existence theorem. By Proposition~\ref{prop:struwe-trick}(c) and Proposition~\ref{prop:length-bound}, for almost every $\kappa$ there exists a sequence $\ep_j \to 0$ and a constant $c$ such that for all $j$, there exist sweepouts $\{\gamma_n\} \subset \cP_{\ep_j}$ to which we may apply Proposition~\ref{prop:extract} to extract a non-trivial critical point $u_j$ of $L_{\kappa, \ep_j}$ satisfying 
\begin{equation}\label{eq:upper-lower-bound}
\eta_1 \leq L_{\ep_j}(u_j) \leq 8\kappa^2 c.
\end{equation}
By Proposition~\ref{prop:smooth} we know that $u_j$ is smooth, and that $l_j := |u_j'|$ is constant. Consequently, by~\eqref{eq:upper-lower-bound} we have
\begin{equation}\label{eq:speed-bound}
\left(\frac{\eta_1}{2\pi}  + \ep_j^{1+\ep_j}\right)^{\frac{2}{1 + \ep_j}} - \ep_j^2 \leq l_j^2 \leq \left( \frac{4\kappa^2C}{\pi}  + \ep_j^{1+\ep_j}\right)^{\frac{2}{1 + \ep_j}} - \ep_j^2.
\end{equation}

The proposition below finishes the proof of Theorem~\ref{thm:main1}.
\begin{prop}\label{prop:convergence}
Passing to a subsequence if necessary, $u_j$ converge smoothly on $S^1$ to a non-trivial limit $u$ which, after reparametrization if necessary, satisfies $|u'| = 1$ and
\[
u'' = A_u(u', u') + \kappa Q_u(u').
\]
\end{prop}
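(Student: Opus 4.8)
The plan is to first extract a uniform $C^\infty$ bound for the $u_j$ from the Euler--Lagrange equation, then pass to a limit, and finally remove the $\varepsilon$-regularization at the level of the equation and reparametrize to unit speed. We start from the ODE obeyed by each critical point. By Proposition~\ref{prop:smooth}, writing $h_j = u_j'/(\varepsilon_j^2 + |u_j'|^2)^{\frac{1-\varepsilon_j}{2}}$, we have $h_j' = w_j$ with
\[
w_j = \frac{A_{u_j}(u_j', u_j')}{(\varepsilon_j^2 + |u_j'|^2)^{\frac{1-\varepsilon_j}{2}}} + (1 + \varepsilon_j)^{-1}\kappa\, Q_{u_j}(u_j').
\]
Since $l_j = |u_j'|$ is constant and, by~\eqref{eq:speed-bound}, satisfies $0 < c_0 \le l_j \le c_1 < \infty$ for constants $c_0, c_1$ independent of $j$ (here I would check that the lower bound in~\eqref{eq:speed-bound} is bounded below away from $0$ once $\varepsilon_j$ is small, which follows from $\eta_1 > 0$ and $\eta_1/(2\pi) + \varepsilon_j^{1+\varepsilon_j} \to \eta_1/(2\pi)$), the quantity $(\varepsilon_j^2 + l_j^2)^{\frac{1-\varepsilon_j}{2}}$ is bounded above and below by positive constants. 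The second fundamental form $A$ and the almost-complex structure $Q$ are smooth on the compact $S^2 \subset \RR^N$ with derivatives of all orders bounded, so $\|w_j\|_{C^0} \le C$. Hence $\|h_j'\|_{C^0} \le C$, and since $|h_j| = l_j(\varepsilon_j^2 + l_j^2)^{\frac{\varepsilon_j - 1}{2}}$ is bounded, $h_j$ is bounded in $C^1(S^1;\RR^N)$. By the relation~\eqref{eq:u-h-relation}, $u_j' = (\varepsilon_j^2 + \tau_j^{-1}(|h_j|^2))^{\frac{1-\varepsilon_j}{2}} h_j$; since $\tau_j^{-1}$ is smooth with bounds that stay uniform on the relevant compact range of $|h_j|^2$ (using that $l_j$ stays in a compact subinterval of $(0,\infty)$), this gives a uniform $C^1$ bound on $u_j'$, hence a uniform $C^2$ bound on $u_j$. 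One then bootstraps: differentiating the ODE for $h_j$ and using the chain rule together with the smoothness and uniform bounds of $A$, $Q$, $\tau_j^{-1}$, and the projection $\Pi$, each additional derivative of $w_j$ is controlled by lower-order norms already bounded, so $\|u_j\|_{C^k} \le C_k$ for every $k$, with $C_k$ independent of $j$.

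Next I would pass to the limit. By Arzel\`a--Ascoli and a diagonal argument, a subsequence of $u_j$ converges in $C^\infty(S^1;\RR^N)$ to some $u \in C^\infty(S^1; S^2)$ (the constraint $u_j(\theta)\in S^2$ passes to the limit since $S^2$ is closed). The constant speeds $l_j$ converge to some $l \in [c_0, c_1] \subset (0,\infty)$, so $|u'| \equiv l > 0$; in particular $u$ is an immersion. The lower bound $L_{\varepsilon_j}(u_j) \ge \eta_1$ passes to the limit to give $L(u) \ge \eta_1 > 0$ (using $L_{\varepsilon_j}(u_j) \to L(u)$, which holds by uniform convergence of $u_j'$ and $\varepsilon_j \to 0$), so $u$ is non-constant and hence non-trivial. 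Now I pass the equation to the limit: as $j \to \infty$ and $\varepsilon_j \to 0$, the prefactor $(\varepsilon_j^2 + l_j^2)^{\frac{1-\varepsilon_j}{2}} \to l$ and $(1+\varepsilon_j)^{-1} \to 1$, while $u_j, u_j', u_j'' \to u, u', u''$ uniformly and $A, Q$ are continuous. From $h_j' = w_j$, rewriting $h_j' = \big((\varepsilon_j^2 + |u_j'|^2)^{\frac{\varepsilon_j - 1}{2}} u_j'\big)'$ and using that $|u_j'| = l_j$ is constant, we get $h_j' = (\varepsilon_j^2 + l_j^2)^{\frac{\varepsilon_j - 1}{2}} u_j''$; passing to the limit yields
\[
l^{-1} u'' = l^{-1} A_u(u', u') + \kappa\, Q_u(u'),
\]
that is, $u'' = A_u(u', u') + \kappa l\, Q_u(u')$, with $|u'| \equiv l$.

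Finally I reparametrize to unit speed. Set $\tilde u(\theta) = u(\theta/l)$, viewing $\theta \mapsto \theta/l$ as a constant-speed rescaling of the circle (after rescaling the domain circle by the factor $l$, or equivalently precomposing with the affine reparametrization; since $u$ is $2\pi$-periodic this is a map of a circle of circumference $2\pi l$ to $S^2$, which one reparametrizes back to $S^1$ — the point is purely that $|\tilde u'| = 1$). Then $\tilde u' = l^{-1} u'(\cdot/l)$ has $|\tilde u'| = 1$, and $\tilde u'' = l^{-2} u''(\cdot/l) = l^{-2}\big(A_u(u',u') + \kappa l\, Q_u(u')\big) = A_{\tilde u}(\tilde u', \tilde u') + \kappa\, Q_{\tilde u}(\tilde u')$, using that $A$ is bilinear (so $A_u(u', u') = l^2 A_{\tilde u}(\tilde u', \tilde u')$) and $Q$ is linear (so $Q_u(u') = l\, Q_{\tilde u}(\tilde u')$). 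This is exactly~\eqref{eq:cgc-equation} with $|\tilde u'| = 1$, and $\tilde u$ is non-trivial since $u$ is. Renaming $\tilde u$ as $u$ completes the proof.

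I expect the main obstacle to be the uniform higher-order estimates: one must be careful that all the constants coming from $\tau_j^{-1}$, the prefactors $(\varepsilon_j^2 + |u_j'|^2)^{\pm(1-\varepsilon_j)/2}$, and the bootstrapping of the $h_j$-ODE remain uniform in $j$, which crucially relies on the \emph{two-sided} bound~\eqref{eq:speed-bound} on $l_j$ — the upper bound keeping things from blowing up and the lower bound (ultimately the non-triviality lower bound $\eta_1$ from Proposition~\ref{prop:extract}(c)) keeping the degenerate elliptic prefactor away from a region where its $\varepsilon_j \to 0$ limit would be singular. Everything else is routine continuity and the algebra of reparametrization.
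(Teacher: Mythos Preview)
Your proof is correct and follows essentially the same arc as the paper's: uniform $C^k$ bounds from the Euler--Lagrange equation, Arzel\`a--Ascoli, pass the equation to the limit, then reparametrize by $\theta \mapsto \theta/l$.

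One minor simplification worth noting: the paper exploits more directly the fact that $|u_j'| = l_j$ is \emph{constant}. Plugging this into the weak Euler--Lagrange equation~\eqref{eq:Euler-Lagrange} and multiplying through by the constant $(\ep_j^2 + l_j^2)^{\frac{1-\ep_j}{2}}$, one immediately obtains the non-degenerate ODE
\[
u_j'' = A_{u_j}(u_j', u_j') + \frac{\kappa}{1+\ep_j}(\ep_j^2 + l_j^2)^{\frac{1-\ep_j}{2}}\, Q_{u_j}(u_j'),
\]
from which the $C^k$ bootstrapping is entirely routine and requires only the \emph{upper} bound on $l_j$; the lower bound is used solely to guarantee $l > 0$ at the end. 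Your route through the auxiliary function $h_j$ and the inverse map $\tau_j^{-1}$ is valid, but it forces you to track uniformity of $\tau_j^{-1}$ near the relevant range of $|h_j|^2$, and this is where you (correctly but unnecessarily) invoke the two-sided bound on $l_j$ for the estimates. In short, once constant speed is known, the degenerate prefactor is just a number and there is no singularity to avoid; the $h_j$ machinery of Proposition~\ref{prop:smooth} was built to handle variable $|u'|$ and is overkill here.
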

\begin{proof}
Since each $u_j$ has constant speed $l_j$, the equation~\eqref{eq:Euler-Lagrange} becomes
\begin{equation}\label{eq:E-L-approximate}
\int_{S^1}\big[ (1 + \ep_j) u_j' \cdot \psi' + (1 + \ep_j) A_{u_j}(u_j', u_j') \cdot \psi + \kappa (\ep_j^2 + l_j^2)^{\frac{1 - \ep_j}{2}} Q_{u_j}(u_j') \cdot \psi \big]d\theta = 0,
\end{equation}
for all $\psi \in C^{1}(S^1; \RR^N)$. Since $l_j$ is uniformly bounded from above, we infer by bootstrapping that $(u_j)$ is uniformly bounded in $C^{k}(S^1; \RR^N)$ for all $k$. Hence we may extract a subsequence, which we do not relabel, converging smoothly on $S^1$ to a limit $u$ having constant speed $l = \lim_{j \to \infty}l_j$, which must lie in $(0, \infty)$ by~\eqref{eq:speed-bound}. Moreover, passing to the limit in~\eqref{eq:E-L-approximate}, we see that $u$ satisfies
\[
-u'' + A_u(u', u') + \kappa l \cdot Q_u(u') = 0.
\]
Now reparametrize $u$ by setting $v(s) = u(s/l)$ for $s \in [0, 2\pi l]$. Then $|v'| \equiv 1$, and 
\begin{align*}
v''(s) &= l^{-2}u''(s/l) \\
&= l^{-2}A_u(u', u')(s/l) + \kappa l^{-1} \cdot Q_u(u')(s/l) \\
&= A_v(v', v')(s) + \kappa \cdot Q_v(v')(s).
\end{align*}
\end{proof}

%%%
%Appendix--------------------------------------------------------
\appendix

\section{Proofs of some standard estimates} 
\label{sec:standard-estimates}
\begin{proof}[Proof of Lemma~\ref{lemm:coercive}.]
For~\eqref{eq:F-estimate1}, we let $y_t = t y_1 + (1-  t) y_0$, and apply the fundamental theorem of calculus to write
\begin{align*}
& \big((dF)_{y_1} - (dF)_{y_0}\big) \cdot (y_1 - y_0) \\
& = (1 + \ep)\Big( \int_{0}^1 \frac{y_1 - y_0}{(\ep^2 + |y_t|^2)^{\frac{1 - \ep}{2}}} - (1 - \ep)\frac{y_t \cdot (y_1 - y_0)}{(\ep^2 + |y_t|^2)^{\frac{3 - \ep}{2}}} y_t \  dt \Big) \cdot (y_1 - y_0)\\
&\geq (1 + \ep) \int_{0}^1 (\ep^2 + |y_t|^2)^{\frac{\ep - 3}{2}}\big( |y_1 - y_0|^2 (\ep^2 + |y_t|^2) - (1 - \ep) |y_t|^2 |y_1 - y_0|^2  \big)  dt \\
&\geq \ep(1 + \ep)\int_{0}^1 (\ep^2 + |y_t|^2)^{\frac{\ep - 1}{2}}|y_1 - y_0|^2 dt,
\end{align*}
which implies~\eqref{eq:F-estimate1} since $\ep < 1$ and $|y_t|^2 \leq 2|y_0|^2 + 2|y_1|^2$.

Throughout the proof of~\eqref{eq:F-estimate2}, the inequalities~\eqref{eq:fake-triangle} and~\eqref{eq:fake-triangle-2}, along with the fact that 
\[
|y_1| + |y_0| \leq C(|y_1|^2 + |y_0|^2)^{1/2},
\]
will be used frequently without further comment. To begin, we assume without loss of generality that $|y_1| \geq |y_0|$, and write
\begin{align*}
\frac{1}{1 + \ep}\big((dF)_{y_1} - (dF)_{y_0}\big) =\ & \big( (\ep^2 + |y_1|^2)^{\ep/2}  - (\ep^2 + |y_0|^2)^{\ep/2} \big)\frac{y_1}{(\ep^2 + |y_1|^2)^{1/2}}\\
& + (\ep^2 + |y_0|^2)^{\ep/2} \frac{y_1 - y_0}{(\ep^2 + |y_1|^2)^{1/2}} \\
& + (\ep^2 + |y_0|^2)^{\ep/2}\Big( \frac{1}{(\ep^2 + |y_1|^2)^{1/2}} - \frac{1}{(\ep^2 + |y_0|^2)^{1/2}} \Big)y_0\\
=:\ & I + II + III.
\end{align*}
Estimating $I$ is rather straightforward:
\begin{align*}
|I| & \leq ||y_1|^2 - |y_0|^2|^{\frac{\ep}{2}} \leq (|y_1| + |y_0|)^{\ep/2}|y_1 - y_0|^{\ep/2}\\
&\leq C(\ep^2 + |y_0|^2 + |y_1|^2)^{\ep/4}|y_1 - y_0|^{\ep/2}.
\end{align*}
As for $II$, we note that since $|y_1| \geq |y_0|$, we have 
\begin{equation}\label{eq:y1-geq-y0}
\ep^2  + |y_1|^2 \geq \frac{1}{2}(\ep^2 + |y_1|^2 + |y_0|^2).
\end{equation}
Hence
\begin{align*}
|II| &\leq C(\ep^2 + |y_0|^2 + |y_1|^2)^{\ep/2 - 1/2}|y_1 - y_0|^{1 - \ep/2} |y_1 - y_0|^{\ep/2}\\
&\leq C(\ep^2 + |y_0|^2 + |y_1|^2)^{\ep/4}|y_1 - y_0|^{\ep/2}.
\end{align*}
Finally, for $III$, using~\eqref{eq:y1-geq-y0} and the fact that $|y_0|(\ep^2 + |y_0|^2)^{-1/2} \leq 1$, we have
\begin{align*}
|III| & \leq (\ep^2 + |y_0|^2)^{\ep/2}\frac{|y_0|\big| (\ep^2 + |y_1|^2)^{1/2} - (\ep^2 + |y_0|^2)^{1/2}  \big|}{(\ep^2 + |y_0|^2)^{1/2}(\ep^2 + |y_1|^2)^{1/2}}\\
&\leq (\ep^2 + |y_0|^2 + |y_1|^2)^{\ep/2 - 1/2}(|y_1| + |y_0|)^{1/2} |y_1 - y_0|^{1/2 - \ep/2}|y_1  - y_0|^{\ep/2}\\
&\leq C(\ep^2 + |y_0|^2 + |y_1|^2)^{\ep/4}|y_1 - y_0|^{\ep/2}.
\end{align*}
Combining the estimates for $I$, $II$ and $III$ gives~\eqref{eq:F-estimate2}.
\end{proof}

\begin{proof}[Proof of Lemma~\ref{lemm:G-dL-relation}.]
The first inequality follows since for all $\psi \in \cT_u$ with $\|\psi\|_{2, 2} \leq 1$, we have
\[
\delta L_{\kappa, \ep}(u)(\psi) = G_{\kappa, \ep}(u)(\psi) \leq \|G_{\kappa, \ep}(u)\| \|\psi\|_{2, 2} \leq \|G_{\kappa, \ep}(u)\|.
\]
For the second inequality, note that by the smoothness of the nearest-point projection and the chain rule for weak derivatives, the composition $P_u: S^1 \to \RR^{N \times N}$ lies in $W^{1, 1 + \ep}$, with 
\begin{equation}\label{eq:projection-bound}
\|P_u\|_{1, 1 + \ep} \leq C (1 + \| u' \|_{1 + \ep}).
\end{equation}
Recall also the basic fact that if $f, h \in W^{1, 1 + \ep}(S^1; \RR^N)$, then so does $fh$, in which case
\begin{equation}\label{eq:algebra}
\|fh\|_{1, 1 + \ep} \leq C\|f\|_{1, 1 + \ep} \|h\|_{1, 1+ \ep}.
\end{equation}
By~\eqref{eq:projection-bound} and~\eqref{eq:algebra}, we see that for all $\psi \in W^{1, 1 + \ep}(S^1; \RR^N)$ with $\|\psi\|_{1, 1 + \ep} \leq 1$, the matrix-vector product $P_u(\psi)$ belongs to $\cT_u$, and satisfies $\| P_u (\psi) \|_{1, 1 + \ep} \leq C(1 + \| u'\|_{1 + \ep})$. Consequently, 
\begin{align*}
G_{\kappa, \ep}(u)(\psi) &= \delta L_{\kappa, \ep}(u)(P_u(\psi)) \leq \| \delta L_{\kappa, \ep}(u)\| \|P_u(\psi)\|_{1, 1 + \ep}\\
&\leq \| \delta L_{\kappa, \ep}(u)\| \cdot C(1 + \| u'\|_{1 + \ep}).
\end{align*}
This proves the second inequality in the statement.
\end{proof}
%%%
%Bibliography--------------------------------------------------------
%\bibliographystyle{amsalpha}
\bibliographystyle{amsplain}
\bibliography{cgc-existence-arxiv}
\end{document}